\newcommand{\R}{{\mathbb R}}
\newcommand{\N}{{\mathbb N}}
\newcommand{\Z}{{\mathbb Z}}
\newcommand{\SN}{{\mathbb S}^{N-1}}
\newcommand{\dyle}{\displaystyle}
\newcommand{\dint}{\dyle\int}
\newcommand{\e }{\varepsilon}
\newcommand{\dive }{\mathop{\rm div}}
\newcommand{\A}{{\mathbf A}}
\renewcommand{\ge }{\geqslant}
\renewcommand{\geq }{\geqslant}
\renewcommand{\leq }{\leqslant}
\newenvironment{pf}{\noindent{\sc Proof}.\enspace}{\hfill\qed\medskip}
\newenvironment{pfn}[1]{\noindent{\bf Proof of
    {#1}.\enspace}}{\hfill\qed\medskip}
\numberwithin{equation}{section}
\newtheorem{Theorem}{Theorem}[section]
\newtheorem{Corollary}[Theorem]{Corollary}
\newtheorem{Lemma}[Theorem]{Lemma}
\newtheorem{Proposition}[Theorem]{Proposition}
\theoremstyle{definition} 
\newtheorem{remark}[Theorem]{Remark}
\begin{document}
\title[Schr\"odinger with critical potentials]{Time decay of
scaling critical electromagnetic Schr\"odinger flows}
\author{Luca Fanelli}
\address{Luca Fanelli: Universidad del Pa\'is Vasco, Departamento de
Matem\'aticas, Apartado 644, 48080, Bilbao, Spain}
\email{luca.fanelli@ehu.es}
\author{Veronica Felli}
\address{Veronica Felli: Universit$\grave{\text{a}}$ di Milano Bicocca,
Dipartimento di Matematica e applicazioni, Via Cozzi 53, 20125, Milano, Italy%
}
\email{veronica.felli@unimib.it}
\author{Marco A. Fontelos}
\address{Marco Antonio Fontelos: ICMAT-CSIC, Ciudad Universitaria de
Cantoblanco. 28049, Madrid, Spain}
\email{marco.fontelos@icmat.es}
\author{Ana Primo}

\address{Ana Primo: ICMAT-CSIC, Ciudad Universitaria de Cantoblanco. 28049,
Madrid, Spain}
\email{ana.primo@icmat.es}
\subjclass[2000]{35J10, 35L05.}
\keywords{Schr\"odinger equation, electromagnetic potentials, representation
formulas, decay estimates}

\begin{abstract}
We obtain a representation formula for solutions to Schr\"odinger  equations
with a class of homogeneous, scaling-critical electromagnetic potentials. As a consequence, we  prove the sharp $L^1\to L^\infty$ time decay estimate for the 3D-inverse square and the 2D-Aharonov-Bohm potentials.
\end{abstract}

\date{March 8, 2012}
\maketitle



\section{Introduction and statements of the results}

This work is concerned with the dispersive property of the following class
of Schr\"odinger equations with singular homogeneous electromagnetic
potentials
\begin{equation}  \label{prob}
iu_t=\left(-i\nabla+ \dfrac{{\mathbf{A}}\big(\frac{x}{|x|}\big)} {|x|}
\right)^{\!\!2} u+ \dfrac{a\big(\frac{x}{|x|}\big)}{|x|^2}\,u;
\end{equation}
here $u=u(x,t):{\mathbb{R}}^{N+1}\to{\mathbb{C}}$, $N\geq 2$, $a\in
L^{\infty}({\mathbb{S}}^{N-1}, {\mathbb{R}})$, ${\mathbb{S}}^{N-1}$ denotes
the unit $(N-1)$-dimensional sphere, and ${\mathbf{A}}\in C^1({\mathbb{S}}%
^{N-1},{\mathbb{R}}^N)$ satisfies the following  transversality condition
\begin{equation}  \label{transversality}
{\mathbf{A}}(\theta)\cdot\theta=0 \quad \text{for all }\theta\in {\mathbb{S}}%
^{N-1}.
\end{equation}
We always denote by $r:=|x|$, $\theta=x/|x|$, so that $x=r\theta$.

Equation \eqref{prob} describes the dynamics of a (non relativistic)
particle under the action of a fixed external electromagnetic field $(E,B)$
given by
\begin{equation*}
E(x):=\nabla\left(\dfrac{a\big(\frac{x}{|x|}\big)}{|x|^2}\right)\!, \qquad
B(x)=d\left(\dfrac{{\mathbf{A}}\big(\frac{x}{|x|}\big)} {|x|}\right)\!,
\end{equation*}
where $B$ is the differential of the linear 1-form associated to the vector field $\frac{{\mathbf{A}}(x/|x|)
}{|x|}$. In dimension $N=3$, due to the identification
between 1-forms and 2-forms, the magnetic field $B$ is in fact determined by
the vector field $\mathop{\rm curl}\frac{{\mathbf{A}}(x/|x|)}{|x|}$, in the
sense that
\begin{equation*}
B(x)v=\mathop{\rm curl}\big(\tfrac{{\mathbf{A}}(x/|x|)}{|x|}\big)\times v,
\qquad N=3,
\end{equation*}
for any vector $v\in{\mathbb{R}}^3$, the cross staying for the usual
vectorial product.

Under the transversality condition \eqref{transversality}, the hamiltonian
\begin{equation}  \label{eq:hamiltonian}
{\mathcal{L}}_{{\mathbf{A}},a}:= \left(-i\,\nabla+\frac{{\mathbf{A}}\big(%
\frac{x}{|x|}\big)} {|x|}\right)^{\!\!2}+\dfrac{a\big(\frac{x}{|x|}\big)}{%
|x|^2}
\end{equation}
formally acts on functions $f:{\mathbb{R}}^N\to{\mathbb{C}}$ as
\begin{equation*}
{\mathcal{L}}_{{\mathbf{A}},a}f= -\Delta  f+\frac{|{\mathbf{A}}\big(\frac{x}{%
|x|}\big)|^2+ a\big(\frac{x}{|x|}\big)-i\dive\nolimits_{{\mathbb{S}}^{N-1}}{%
\mathbf{A}}\big(\frac{x}{|x|}\big)
  }{|x|^2}\,f-2i\,\frac{{\mathbf{A}}\big(\frac{x}{|x|}\big)}{|x|}\cdot\nabla
f,
\end{equation*}
where $\dive\nolimits_{{\mathbb{S}}^{N-1}}{\mathbf{A}}$ denotes the
Riemannian  divergence of ${\mathbf{A}}$ on the unit sphere ${\mathbb{S}}%
^{N-1}$ endowed  with the standard metric.

The free Schr\"odinger equation, i.e. \eqref{prob} with ${\mathbf{A}}\equiv{%
\mathbf{0}}$ and  $a\equiv0$, can be somehow considered as the canonical
example of  dispersive equation. The unique solution $u\in\mathcal{C}({%
\mathbb{R}};L^2({\mathbb{R}}^N))$ of the Cauchy problem
\begin{equation}  \label{eq:schrofree}
\begin{cases}
iu_t=-\Delta u \\
u(x,0)=f(x)\in L^2({\mathbb{R}}^N)%
\end{cases}%
\end{equation}
can be explicitly written as follows:
\begin{equation}  \label{eq:free}
u(x,t)=e^{it\Delta}f(x):=\frac{1}{(4\pi it)^{\frac N2}} e^{i\frac{|x|^2}{4t}%
}\int_{{\mathbb{R}}^N} e^{-i\frac{x\cdot y}{2t}} e^{i\frac{|y|^2}{4t}%
}f(y)\,dy.
\end{equation}
This shows that, up to scalings and modulations, $u$ is the Fourier
transform of the initial datum $f$. Formula \eqref{eq:free} contains  most
of the relevant informations about the dispersion  which arises
along the evolution of the Schr\"odinger flow.
In dimension $N=1$,  the evolution of an initial wave packet of the form
\begin{equation*}
f_K(x)=e^{iKx}e^{-\frac{x^2}{2}}, \quad K\in\mathbb{N},
\end{equation*}
gives an important description of the phenomenon.
Inserting $f=f_K$ in \eqref{eq:free} gives in turn the
solution
\begin{equation*}
u_K(x,t)=e^{-\frac{K^2}2}u_0(x-iK,2t); \quad u_0(z,t)=\frac{1}{\sqrt{it+1}}e^{-\frac{z^2}{2(it+1)}}.
\end{equation*}
This shows that each wave travels with a speed which is proportional to
the frequency $K$, and describes both the phenomenon and the terminology of dispersion.

The above property can be quantified in
terms of a priori estimates for solutions to \eqref{eq:schrofree}. A first
consequence of \eqref{eq:free} is the time decay
\begin{equation}  \label{eq:decay}
\left\|e^{it\Delta}f(\cdot)\right\|_{L^p}\leq\frac{C}{|t|^{N\left(\frac12-%
\frac1p\right)}} \|f\|_{L^{p^{\prime }}}, \qquad p\geq2; \quad
\frac1p+\frac1{p^{\prime }}=1,
\end{equation}
for some $C=C(p,N)>0$ independent on $t$ and $f$.
In the cases $p=\infty, p=2$, \eqref{eq:decay} can be easily obtained by \eqref{eq:free} and Plancherel; the
rest of the range $2<p<\infty$ follows by Riesz-Thorin interpolation. These inequalities play
a fundamental role in many different fields, including scattering theory,
harmonic analysis and nonlinear analysis. In particular, they standardly imply the
following Strichartz estimates
\begin{equation}  \label{eq:stri}
\left\|e^{it\Delta}f\right\|_{L^p_tL^q_x}\leq C\|f\|_{L^2},
\end{equation}
for some $C>0$, where $L^p_tL^q_x:=L^p({%
\mathbb{R}};L^q({\mathbb{R}}^N))$ and the couple $(p,q)$ satisfies the
scaling condition
\begin{equation}  \label{eq:admis}
\frac2p+\frac Nq=\frac N2, \qquad p\geq2, \qquad (p,q,N)\neq(2,\infty,2).
\end{equation}
The first result in this style has been obtained by Segal in  \cite{Se} for
the wave equation; then it was generalized by  Strichartz in \cite{St} in
connection with the Restriction Theorem by  Tomas in \cite{T}. Later,
Ginibre and Velo introduced in \cite{GV1}  (see also \cite{GV2}) a different
point of view which was extensively  used by Yajima in \cite{Y1} to prove a
large amount of inequalities  for the linear Schr\"odinger equation.
Finally, Keel and Tao in  \cite{KT} completed the picture of estimates %
\eqref{eq:stri}, proving  the difficult endpoint estimate $p=2$, via
bilinear techniques, for  an abstract propagator verifying a time decay
estimate in the spirit  of \eqref{eq:decay}.

Time decay and Strichartz estimates turn out to be a  fundamental tool
in the nonlinear applications, and consequently a large literature  has been devoted, in the
last years, to obtain them in more general  situations, as for example
perturbations of the Schr\"odinger  equation with linear lower order terms,
as in \eqref{prob}. In  particular, since less regular terms usually arise
in the physically  relevant models, a deep effort has been spent in order to
overcome  the difficulty deriving from the fact that the Fourier transform
does  not fit well with differential operators with rough  coefficients. Among these, electromagnetic Schr\"odinger hamiltonians have been object of study in several papers.

 An electromagnetic Schr\"odinger equation has the form
\begin{equation}  \label{eq:schro2}
iu_t=(-i\nabla+A(x))^2u+V(x)u,
\end{equation}
where $u=u(x,t):{\mathbb{R}}^{N+1}\to{\mathbb{C}}$, $A:{\mathbb{R}}^N\to{%
\mathbb{R}}^N$, and  $V:{\mathbb{R}}^N\to{\mathbb{R}}$. Homogeneous
potentials like
\begin{equation}  \label{eq:omog}
|A|\sim\frac1{|x|}, \qquad |V|\sim\frac1{|x|^2}
\end{equation}
represent a threshold for the validity of estimates \eqref{eq:decay}
and %
\eqref{eq:stri}, as shown by Goldberg, Vega and Visciglia in
\cite{GVV}, when $A\equiv0$ and later generalized by Fanelli and
Garc\'ia in \cite{FG} for $A\neq0$ (actually, the authors in
\cite{FG,GVV} disprove Strichartz estimates, and a byproduct of this
fact is the failure of the usual time decay estimates). Notice that,
for potentials as the ones in \eqref{eq:omog}%
, equation \eqref{eq:schro2} remains invariant under the usual scaling
$%
u_\lambda(x,t)=u(x/\lambda, t/\lambda^2)$, $\lambda>0$, and this is
why we refer to it as the scaling-critical situation. We also recall
that equation \eqref{eq:schro2} is gauge invariant, namely if $u$ is a
solution to \eqref{eq:schro2}, then $v=e^{i\phi(x)}u$ solves the same
equation, with $A$ replaced by $A+\nabla\phi$ and the same magnetic
field $B$.

In the purely electric case $A\equiv0$, several authors studied the
time dispersion when the potential $V$ is close to the scaling
invariant case \eqref{eq:omog} (see \cite{B, BS, BG, DP, GeVi, GS, JN,
  JSS, PSTZ, RS} and the references therein, both for Schr\"odinger
and wave equations, and also the useful survey \cite{S}). A typical
perturbative approach consists in writing the action of the flow
$e^{it(\Delta-V)}$ via spectral theorem, and then reducing matters of
proving the desired estimate to perform a suitable analysis of the
resolvent of $-\Delta+V$, in the Agmon-H\"ormander style.  We refer to
the results by Goldberg-Schlag \cite{GS} and Rodnianski-Schlag
\cite{RS}, in which also time dependent potentials are treated, as
standard examples of this technique for Schr\"odinger equations; in
these papers, time decay estimates are obtained under integrability
conditions on $%
V$ which are close to the scaling invariant case \eqref{eq:omog}, but
do not include the critical behavior $1/|x|^2$, due to the
perturbation character of the strategy.  Another possible approach
consists in studying the mapping properties of the wave operators in
$L^p$, and obtaining the time decay for the perturbed flow
$e^{it(\Delta-V)}$ as a consequence of \eqref{eq:decay}, via
interwining properties. This point of view was introduced by Yajima in
\cite{Y2, Y3, Y4} and then followed by different authors (see
e.g. \cite%
{DF1, W1, W2}). Since it leads to a much stronger result, the
integrability conditions which are needed for the potential $V$ are
usually far from being optimal in the sense of \eqref{eq:omog}. The
unique situation in which, at our knowledge, the $L^p$-boundedness of
the wave operators is proved under almost sharp assumptions on $V$ is
the 1D-case, as it has been proven in \cite{DF1}. About Strichartz
estimates for $e^{it(\Delta-V)}$, the situation is quite clear, thanks
to the results obtained by Burq, Planchon, Stalker and Tahvildar-Zadeh
in \cite{BPSTZ1, BPSTZ}; the authors can prove a suitable
Morawetz-type estimate for the perturbed resolvent, by multiplier
techniques, which implies, together with its free countepart and free
Strichartz, the Strichartz estimates for a class of potentials $V$
which includes the ones which are critical in the sense of
\eqref{eq:omog}.

The situation in the electromagnetic case $A\neq0, V\neq0$ is quite more
complicated and weaker results are available. Some additional difficulties,
performing the above mentioned approach, come into play, due to
the introduction of a first order term in the equation, which makes more complicate the analysis of the resolvent (see e.g. \cite{DF2}%
). On the other hand, some results are available, both for estimates like %
\eqref{eq:decay} and \eqref{eq:stri}, under suitable conditions on the
potentials in \eqref{eq:schro2}, which as far as we know never permit to
recover the critical cases as in \eqref{eq:omog} (see e.g. \cite{C, CS,
EGS1, EGS2, DF2, DF3, DFVV, GST, MMT, RZ, ST, Ste} and the references
therein).

In view of the above considerations, it should be quite interesting to
produce a tool which might permit to prove the decay estimates %
\eqref{eq:decay} for equation \eqref{prob}, in which the potentials are
scaling-critical. The main goal of this manuscript is to give an explicit
representation formula for solutions to \eqref{prob}, which is in fact a
generalization of \eqref{eq:free}. In the approach we follow in the sequel,
the critical homogeneities and the transversality condition %
\eqref{transversality} play a fundamental role. We are now ready to prepare
the setting of our main results.

A key role in the representation formula we are going to derive in section %
\ref{sec:main-theor-repr} is played by the spectrum of the angular component
of the operator ${\mathcal{L}}_{{\mathbf{A}},a}$ on the unit $(N-1)$%
-dimensional sphere ${\mathbb{S}}^{N-1}$, i.e. of the operator
\begin{align}  \label{eq:angular}
L_{{\mathbf{A}},a} & =\big(-i\,\nabla_{\mathbb{S}^{N-1}}+{\mathbf{A}}\big)%
^2+a(\theta) \\
& =-\Delta_{\mathbb{S}^{N-1}}+\big(|{\mathbf{A}}|^2+ a(\theta)-i\,\dive%
\nolimits_{{\mathbb{S}}^{N-1}}{\mathbf{A}}\big)-2i\,{\mathbf{A}}\cdot\nabla_{%
\mathbb{S}^{N-1}}.  \notag
\end{align}
By classical spectral theory, $L_{{\mathbf{A}},a}$ admits a diverging
sequence of real eigenvalues with finite multiplicity $\mu_1({\mathbf{A}}%
,a)\leq\mu_2({\mathbf{A}},a)\leq\cdots\leq\mu_k({\mathbf{A}},a)\leq\cdots$,
see \cite[Lemma A.5]{FFT}. To each $k\in{\mathbb{N}}$, $k\geq 1$, we
associate a $L^{2}\big({\mathbb{S}}^{N-1},{\mathbb{C}}\big)$-normalized
eigenfunction $\psi_k$ of the operator $L_{{\mathbf{A}},a}$ on $\mathbb{S}%
^{N-1}$ corresponding to the $k$-th eigenvalue $\mu_{k}({\mathbf{A}},a)$,
i.e. satisfying
\begin{equation}  \label{angular}
\begin{cases}
L_{{\mathbf{A}},a}\psi_{k}=\mu_k({\mathbf{A}},a)\,\psi_k(\theta), & \text{in
}{\mathbb{S}}^{N-1}, \\[3pt]
\int_{{\mathbb{S}}^{N-1}}|\psi_k(\theta)|^2\,dS(\theta)=1. &
\end{cases}%
\end{equation}
In the enumeration $\mu_1({\mathbf{A}},a)\leq\mu_2({\mathbf{A}}%
,a)\leq\cdots\leq\mu_k({\mathbf{A}},a)\leq \cdots$ we repeat each eigenvalue
as many times as its multiplicity; thus exactly one eigenfunction $\psi_k$
corresponds to each index $k\in{\mathbb{N}}$. We can choose the functions $%
\psi_k$ in such a way that they form an orthonormal basis of $L^2({\mathbb{S}%
}^{N-1},{\mathbb{C}})$. We also introduce the numbers
\begin{equation}  \label{eq:alfabeta}
\alpha_k:=\frac{N-2}{2}-\sqrt{\bigg(\frac{N-2}{2}\bigg)^{\!\!2}+\mu_k({%
\mathbf{A}},a)}, \quad \beta_k:=\sqrt{\left(\frac{N-2}{2}\right)^{\!\!2}+%
\mu_k({\mathbf{A}},a)},
\end{equation}
so that $\beta_{k}=\frac{N-2}{2}-\alpha_{k}$, for $k=1,2,\dots$, which will
come into play in the sequel. Notice that $\alpha_{1}\geq\alpha_{2}\geq%
\alpha_{3}\dots$.

Under the condition
\begin{equation}  \label{eq:hardycondition}
\mu_1({\mathbf{A}},a)>-\left(\frac{N-2}{2}\right)^{\!\!2}
\end{equation}
the quadratic form associated to $\mathcal{L}_{{\mathbf{A}},a}$ is positive
definite (see Section \ref{sec:functional-setting} below and the paper \cite%
{FFT}), thus implying that the hamiltonian $\mathcal{L}_{{\mathbf{A}},a}$ is
a symmetric semi-bounded operator on $L^2({\mathbb{R}}^N;{\mathbb{C}})$
which then admits a self-adjoint extension (Friedrichs extension) with the
natural form domain. As a consequence, under assumption %
\eqref{eq:hardycondition} the unitary flow $e^{it\mathcal{L}_{{\mathbf{A}}%
,a}}$ is well defined on the domain of $\mathcal{L}_{{\mathbf{A}},a}$ by
Spectral Theorem; therefore, for every $u_0\in L^2({\mathbb{R}}^N;{\mathbb{C}%
})$, there exists a unique solution $u(\cdot,t):=e^{it\mathcal{L}_{{\mathbf{A%
}},a}}u_0\in \mathcal{C}({\mathbb{R}};L^2({\mathbb{R}}^N))$ to %
\eqref{prob} with $u(x,0)=u_0(x)$.

\begin{remark}\label{rem:t_neg}
We notice that
$$
u(\cdot,-s):=e^{-is\mathcal{L}_{{\mathbf{A
      }},a}}u_0= \overline{e^{is\mathcal{L}_{{\mathbf{-A
        }},a}}\overline{u_0}};
$$
henceforth, for the sake of simplify and without loss of generality, in the sequel we consider
$u=u(x,t):{\mathbb{R}}^{N}\times[0,+\infty)\to{\mathbb{C}}$.
\end{remark}

The main theorem of the present
paper provides a representation formula for such
solution in terms of the following  kernel
\begin{equation} \label{nucleo}
K(x,y)=\sum\limits_{k=1}^{\infty }i^{-\beta _{k}}j_{-\alpha
_{k}}(|x||y|)\psi _{k}\big(\tfrac{x}{|x|}\big)\overline{\psi _{k}\big(\tfrac{%
y}{|y|}\big)},
\end{equation}
where $\alpha _{k},\beta _{k}$ are defined in \eqref{eq:alfabeta} and, for every $\nu \in {\mathbb{R}}$,
\begin{equation*}
j_{\nu }(r):=r^{-\frac{N-2}{2}}J_{\nu +\frac{N-2}{2}}(r)
\end{equation*}%
with $J_{\nu }$ denoting the Bessel function of the first kind
\begin{equation*}
  J_{\nu }(t)=\bigg(\frac{t}{2}\bigg)^{\!\!\nu }\sum\limits_{k=0}^{\infty }
  \dfrac{(-1)^{k}}{\Gamma (k+1)\Gamma (k+\nu +1)}\bigg(\frac{t}{2}\bigg)
  ^{\!\!2k}.
\end{equation*}
The following lemma provides uniform convergence on compacts of the
queue of the series in \eqref{nucleo}.
\begin{Lemma}\label{l:queue}
 Let $a\in L^{\infty }({\mathbb{S}}^{N-1},{\mathbb{R}})$, ${%
\mathbf{A}}\in C^{1}({\mathbb{S}}^{N-1},{\mathbb{R}}^{N})$ such that
\eqref{transversality} and \eqref{eq:hardycondition} hold. Then
there exists $k_0\geq1$ such that the series
$$
\sum_{k=k_0+1}^{\infty }i^{-\beta _{k}}j_{-\alpha
_{k}}(|x||y|)\psi _{k}\big(\tfrac{x}{|x|}\big)\overline{\psi _{k}\big(\tfrac{%
y}{|y|}\big)}
$$
is uniformly convergent on compacts and
$$
K(x,y)-\sum_{k=1}^{k_0}i^{-\beta _{k}}j_{-\alpha _{k}}(|x||y|)\psi
_{k}\big(\tfrac{x}{|x|}\big)\overline{\psi _{k}\big(\tfrac{%
    y}{|y|}\big)}\in L_{\rm loc}^{\infty }(\R^{2N},{\mathbb{C}}).
$$
\end{Lemma}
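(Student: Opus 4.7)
The plan is to bound each summand in the tail of \eqref{nucleo} by a factor that decays super-exponentially in $k$. First I choose $k_0\geq 1$ so that $\beta_k>\frac{N-2}{2}$ (equivalently $\alpha_k<0$) for every $k>k_0$; this is possible because $\mu_k(\A,a)\to+\infty$ by classical spectral theory on the compact manifold $\SN$, hence $\beta_k\to+\infty$. With $\beta_k>\frac{N-2}{2}$, the series definition of $J_{\beta_k}$ gives
\[
j_{-\alpha_k}(r)=r^{-(N-2)/2}J_{\beta_k}(r)\sim\frac{r^{\beta_k-(N-2)/2}}{2^{\beta_k}\Gamma(\beta_k+1)}
\]
as $r\to 0^+$, so every summand in the tail extends continuously to all of $\R^{2N}$.

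Next, inserting absolute values in the series defining $J_{\beta_k}$ and using the elementary inequality $\Gamma(m+\beta_k+1)\geq m!\,\Gamma(\beta_k+1)$ (valid for $\beta_k\geq 0$), one gets the uniform bound
\[
|J_{\beta_k}(t)|\leq\frac{(t/2)^{\beta_k}}{\Gamma(\beta_k+1)}\,e^{t^2/4},\qquad t\geq 0.
\]
Hence, on any compact $K\subset\R^{2N}$ contained in $\{|x|,|y|\leq R\}$,
\[
|j_{-\alpha_k}(|x||y|)|\leq C_R\,\frac{(R^2/2)^{\beta_k}}{\Gamma(\beta_k+1)}.
\]
Combined with an $L^\infty$-bound of the form $\|\psi_k\|_{L^\infty(\SN)}\leq C(1+\mu_k)^{\sigma}$ for some $\sigma=\sigma(N)>0$ --- obtained by an elliptic regularity bootstrap for the second-order operator $L_{\A,a}$ followed by Sobolev embedding on $\SN$ --- the $k$-th summand of the tail is dominated on $K$ by
\[
C_R(1+\mu_k)^{2\sigma}\,\frac{(R^2/2)^{\beta_k}}{\Gamma(\beta_k+1)}.
\]

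Finally, since $\beta_k\to\infty$, Stirling's formula forces $(R^2/2)^{\beta_k}/\Gamma(\beta_k+1)$ to decay faster than any exponential in $\beta_k$, and in particular faster than any polynomial in $\mu_k\asymp\beta_k^{2}$; absolute and uniform convergence of the tail on $K$ follows at once. Since each summand is continuous on $\R^{2N}$, the uniform limit is continuous, and in particular lies in $L^\infty_{\loc}(\R^{2N},\C)$. The only non-routine step is the polynomial $L^\infty$ bound on $\psi_k$: with just $\A\in C^1$ and $a\in L^\infty$ it has to be obtained by treating the first- and zeroth-order terms as a right-hand side in the eigenvalue equation $L_{\A,a}\psi_k=\mu_k\psi_k$, upgrading the $L^2$-normalization to $H^2$ at the cost of a power of $\mu_k$, and iterating Sobolev embeddings a bounded number of times.
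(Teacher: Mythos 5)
Your argument follows essentially the same route as the paper's: you control the tail term by term via the crude series bound $|J_{\beta_k}(t)|\le \frac{(t/2)^{\beta_k}}{\Gamma(\beta_k+1)}\,e^{t^2/4}$ (this is exactly the estimate \eqref{eq:stimaJ} used later in the paper), combine it with a polynomial $L^\infty$ bound on the eigenfunctions obtained by an elliptic bootstrap (this is Lemma \ref{l:stiautf} of the Appendix), and conclude by the Weierstrass M-test; the local boundedness of the remainder then follows, as you say, because each tail summand is continuous once $\alpha_k<0$.

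The one genuine gap is in your final step, where you pass from ``the $k$-th majorant decays faster than any polynomial in $\mu_k$'' to ``absolute and uniform convergence follows at once.'' Superpolynomial decay in $\mu_k$ (equivalently, superexponential decay in $\beta_k$) only shows that your majorant $M_k$ tends to $0$ along the sequence; to sum over $k$ you must relate $\mu_k$ to $k$, and the qualitative fact $\mu_k\to+\infty$ is not enough. If, hypothetically, $\beta_k$ grew like $\log\log k$, a majorant of size $e^{-c\,\beta_k\log\beta_k}$ would not be summable. This is precisely where the paper invokes Weyl's law (Theorem \ref{t:weyl}), which gives $\mu_k= C(N,\A,a)\,k^{2/(N-1)}(1+o(1))$, hence $-\alpha_k\ge C_1 k^{1/(N-1)}$ and $\|\psi_k\|_{L^\infty}\le C_2 k^{\delta_2}$ for $k$ large; inserted into your estimates this yields the summable majorant $M_k=C''\,k^{2\delta_2}\big(\tfrac12\big)^{C_1k^{1/(N-1)}}$. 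Once you add this eigenvalue-counting input (or, equivalently, the standard consequence that $\sum_k\mu_k^{-s}<+\infty$ for $s>\tfrac{N-1}{2}$), your proof closes and coincides with the one in the paper.
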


\noindent We can now state the main result of this
paper.
\begin{Theorem}[Representation formula]\label{Main}
  Let $a\in L^{\infty }({\mathbb{S}}^{N-1},{\mathbb{R}})$,
  ${\mathbf{A}}\in C^{1}({\mathbb{S}}^{N-1},{\mathbb{R}}^{N})$ such
  that \eqref{transversality} and \eqref{eq:hardycondition} hold. Let
  $\mathcal{L}_{ {\mathbf{A}},a}$ as in \eqref{eq:hamiltonian} and
  $K$ as in \eqref{nucleo}.
If $u_{0}\in L^{2}({\mathbb{R}}^{N})$ and $u(x,t)=e^{it\mathcal{L}_{{\mathbf{%
A}},a}}u_{0}(x)$, then, for all $t>0$,
\begin{equation}\label{representation}
u(x,t)=\frac{e^{\frac{i|x|^{2}}{4t}}}{i(2t)^{{N}/{2}}}\int_{{\mathbb{R}}%
^{N}}K\bigg(\frac{x}{\sqrt{2t}},\frac{y}{\sqrt{2t}}\bigg)e^{i\frac{|y|^{2}}{%
4t}}u_{0}(y)\,dy.
\end{equation}
\end{Theorem}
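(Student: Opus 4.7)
The idea is to separate variables in the spherical basis $\{\psi_k\}$ adapted to $L_{\mathbf{A},a}$, reduce the flow in each angular mode to a one-dimensional radial Bessel Schr\"odinger equation, unitarily diagonalize that radial operator by the Hankel transform of the correct order, and finally apply the classical Hankel transform of a Gaussian (Weber's formula) extended to the imaginary axis in order to identify the $k$-th angular contribution to the propagator with the $k$-th term of the series \eqref{nucleo}.

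First I would write any $u_{0}\in L^{2}(\R^{N})$ as $u_{0}(r\theta)=\sum_{k\geq 1}\phi_{k}(r)\psi_{k}(\theta)$ with $\phi_{k}(r)=\int_{\SN}u_{0}(r\omega)\overline{\psi_{k}(\omega)}\,dS(\omega)$. Passing to polar coordinates and using the transversality condition \eqref{transversality} to kill the radial component of the first-order term yields the key separation
\begin{equation*}
\mathcal{L}_{\mathbf{A},a}=-\partial_{r}^{2}-\frac{N-1}{r}\partial_{r}+\frac{1}{r^{2}}L_{\mathbf{A},a}.
\end{equation*}
Hence $\mathcal{L}_{\mathbf{A},a}$ preserves each angular mode and acts on $\phi_{k}(r)\psi_{k}(\theta)$ as the radial Bessel operator $L_{k}:=-\partial_{r}^{2}-\frac{N-1}{r}\partial_{r}+\mu_{k}(\mathbf{A},a)/r^{2}$; the Cauchy problem \eqref{prob} splits as $u(x,t)=\sum_{k}\varphi_{k}(|x|,t)\psi_{k}(x/|x|)$, where each $\varphi_{k}$ satisfies $i\partial_{t}\varphi_{k}=L_{k}\varphi_{k}$ with datum $\phi_{k}$.

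To invert the radial flow I would diagonalize $L_{k}$ by the Hankel transform of order $\beta_{k}$. Indeed $r^{-(N-2)/2}J_{\beta_{k}}(\rho r)$ is a generalized eigenfunction of $L_{k}$ with eigenvalue $\rho^{2}$, thanks to the Bessel ODE and the identity $\beta_{k}^{2}-((N-2)/2)^{2}=\mu_{k}(\mathbf{A},a)$; the positivity condition \eqref{eq:hardycondition} ensures $\beta_{k}>0$. The unitary change $g(r)=r^{(N-2)/2}\varphi(r)$ turns $L_{k}$ on $L^{2}(r^{N-1}\,dr)$ into the standard Bessel operator on $L^{2}(r\,dr)$, which the Hankel transform of order $\beta_{k}$ diagonalizes unitarily. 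Consequently the integral kernel of the one-mode propagator against the measure $s^{N-1}\,ds$ is
\begin{equation*}
K_{t}^{(k)}(r,s)=(rs)^{-\frac{N-2}{2}}\int_{0}^{\infty}e^{-it\rho^{2}}J_{\beta_{k}}(r\rho)J_{\beta_{k}}(s\rho)\,\rho\,d\rho.
\end{equation*}
Using the classical Gaussian Hankel identity
\begin{equation*}
\int_{0}^{\infty}e^{-p\rho^{2}}J_{\nu}(r\rho)J_{\nu}(s\rho)\,\rho\,d\rho=\frac{1}{2p}\,e^{-(r^{2}+s^{2})/(4p)}\,I_{\nu}\!\left(\tfrac{rs}{2p}\right),\qquad \operatorname{Re}p>0,
\end{equation*}
and its analytic continuation to $p=it$, together with $I_{\nu}(-iz)=i^{-\nu}J_{\nu}(z)$, one gets
\begin{equation*}
\int_{0}^{\infty}e^{-it\rho^{2}}J_{\nu}(r\rho)J_{\nu}(s\rho)\,\rho\,d\rho=\frac{i^{-\nu}}{2it}\,e^{i(r^{2}+s^{2})/(4t)}\,J_{\nu}\!\left(\tfrac{rs}{2t}\right).
\end{equation*}
Taking $\nu=\beta_{k}$, using $J_{\beta_{k}}(z)=z^{(N-2)/2}j_{-\alpha_{k}}(z)$ to absorb the $(rs)^{-(N-2)/2}$ factor into a clean $(2t)^{-(N-2)/2}$, multiplying by $\psi_{k}(x/|x|)\overline{\psi_{k}(y/|y|)}$, summing over $k$, and noting the scale invariance $j_{-\alpha_{k}}(|x||y|/(2t))=j_{-\alpha_{k}}(|x/\sqrt{2t}||y/\sqrt{2t}|)$, produces exactly the kernel in \eqref{representation}.

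The whole computation would be first performed on the dense class of initial data with finitely many non-trivial angular modes and smooth radial profiles compactly supported away from the origin, where all interchanges of sums, radial integrals and one-dimensional propagators are absolutely convergent. The extension to arbitrary $u_{0}\in L^{2}(\R^{N})$ follows by density and the $L^{2}$-unitarity of $e^{it\mathcal{L}_{\mathbf{A},a}}$, using Lemma \ref{l:queue} to control the tail of the kernel series inside the integral in \eqref{representation}. I expect the main obstacle to be the rigorous justification of Weber's formula at $p=it$, where the $\rho$-integral is only conditionally convergent and must be handled as an Abel limit, together with the uniform estimates needed to pass to the limit simultaneously in $k$ (through Lemma \ref{l:queue}) and in the approximating sequence of initial data.
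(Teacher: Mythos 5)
Your plan is correct in its algebra and arrives at exactly the kernel \eqref{nucleo}, but it follows a genuinely different route from the paper. You diagonalize each angular mode directly by the Hankel transform of order $\beta_k$ (continuous spectral decomposition of the radial Bessel operator) and then push Weber's second exponential integral to the boundary value $p=it$. The paper instead applies the pseudo-conformal change of variables \eqref{varphi}, which converts \eqref{prob} into the magnetic harmonic oscillator $T_{{\mathbf A},a}={\mathcal L}_{{\mathbf A},a}+\tfrac14|x|^2$; this operator has \emph{discrete} spectrum $\gamma_{m,k}=2m+\beta_k+1$ with explicit Laguerre-type eigenfunctions $V_{m,k}$ (Proposition \ref{spectrum}), so the flow is an elementary Fourier series in $m,k$ with coefficients $c_{m,k}e^{-i\gamma_{m,k}\arctan t}$. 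The series over $m$ is then resummed using an integral representation of $P_{k,m}$ against Bessel functions and two applications of Weber's formula with parameters satisfying $|\arg p|<\tfrac\pi4$ \emph{strictly}, i.e. entirely inside the region of absolute convergence. What the paper's detour buys is precisely the avoidance of the point you correctly identify as your main obstacle: the Abel-limit justification of Weber's formula at $p=it$, where the $\rho$-integral is only conditionally convergent. What your route buys is a shorter and more transparent computation (no Laguerre resummation), at the price of that boundary-case continuation and of one further point you should not leave implicit: for $0<\beta_k<1$ the radial operator $L_k$ is in the limit-circle case at $r=0$, so it admits a one-parameter family of self-adjoint extensions, and you must check that the one induced by the Friedrichs extension of ${\mathcal L}_{{\mathbf A},a}$ (fixed by the form domain ${\mathcal D}^{1,2}_*$) is the one diagonalized by the Hankel transform of order $+\beta_k$ rather than $-\beta_k$; your choice is the right one, but the identification requires an argument. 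The truncation to $B_R$ and the $L^2$-isometry step you propose for general data is essentially identical to what the paper does at the end of its proof, using Lemma \ref{l:queue} in the same way.
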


\begin{remark}\label{rem:int}
  The integral at the right hand side of \eqref{representation} is
  understood in the sense the improper multiple integrals, i.e.
\begin{equation*}
  u(x,t)=\frac{e^{\frac{i|x|^{2}}{4t}}}{i(2t)^{{N}/{2}}}
  \lim_{R\to+\infty}\int_{B_R}K\bigg(\frac{x}{\sqrt{2t}},\frac{y}{\sqrt{2t}}\bigg)e^{i\frac{|y|^{2}}{%
      4t}}u_{0}(y)\,dy,
\end{equation*}
where $B_R:=\{y\in\R^N:|y|<R\}$.
\end{remark}

\begin{remark}
\label{rem:free_case} Formula \eqref{representation} is in fact a
generalization of \eqref{eq:free}. Indeed, in the free case, i.e. ${\mathbf{A%
}}\equiv {\mathbf{0}}$ and $a\equiv 0$, the operator $L_{{\mathbf{A}},a}$
reduces to the Laplace Beltrami operator $-\Delta _{\mathbb{S}^{N-1}}$,
whose eigenvalues are given by
\begin{equation*}
\lambda _{\ell }=(N-2+\ell )\ell ,\quad \ell =0,1,2,\dots ,
\end{equation*}%
having the $\ell $-th eigenvalue $\lambda _{\ell }$ multiplicity
\begin{equation*}
m_{\ell }=\frac{(N-3+\ell )!(N+2\ell -2)}{\ell !(N-2)!},
\end{equation*}%
and whose eigenfunctions coincide with the usual spherical harmonics. For
every $\ell \geq 0$, let $\{Y_{\ell ,m}\}_{m=1,2,\dots ,m_{\ell }}$ be a $%
L^{2}(\mathbb{S}^{N-1},{\mathbb{C}}^{N})$-orthonormal basis of the
eigenspace of $-\Delta _{\mathbb{S}^{N-1}}$ associated to $\lambda _{\ell }$
with $Y_{\ell ,m}$ being spherical harmonics of degree $\ell $. Hence we
have that
\begin{align*}
& \text{if }k=1,\text{ then }\mu _{1}({\mathbf{0}},0)=\lambda _{0}=0,\quad
\alpha _{1}=0,\quad \beta _{1}=\tfrac{N-2}{2}, \\
& \text{if }k>1\text{ and }{\textstyle{\sum_{n=0}^{\ell -1}}}m_{n}<k\leq {%
\textstyle{\sum_{n=0}^{\ell }}}m_{n},\text{ then }%
\begin{cases}
\mu _{k}({\mathbf{0}},0)=\lambda _{\ell } \\
\alpha _{k}=-\ell  \\
\beta _{k}=\frac{N-2}{2}+\ell
\end{cases}%
, \\
& \{\psi _{k}\}_{k=1,2,\dots }=\{Y_{\ell ,m}\}_{\substack{ \ell =1,2,\dots
\quad \ \  \\ m=1,2,\dots ,m_{\ell }}}.
\end{align*}%
The Jacobi-Anger expansion for plane waves combined with the Addition
Theorem for spherical harmonics (see for example \cite[formula (4.8.3), p.
116]{Ismail} and \cite[Corollary 1]{BeStr}) yields
\begin{equation*}
e^{ix\cdot y}=(2\pi )^{N/2}\big(|x||y|\big)^{-\frac{N-2}{2}}\sum_{\ell
=0}^{\infty }i^{\ell }J_{\ell +\frac{N-2}{2}}\big(|x||y|\big)\bigg(%
\sum_{m=1}^{m_{\ell }}Y_{\ell ,m}\big(\tfrac{x}{|x|}\big)\overline{%
Y_{\ell ,m}\big(\tfrac{y}{|y|}\big)}\bigg)
\end{equation*}%
for all $x,y\in {\mathbb{R}}^{N}$. Then in the free case ${\mathbf{A}}\equiv
{\mathbf{0}}$, $a\equiv 0$, we have that
\begin{equation*}
K(x,y)=\frac{e^{-ix\cdot y}}{(2\pi )^{\frac{N}{2}}i^{\frac{N-2}{2}}},
\end{equation*}%
which, together with \eqref{representation} and taking into account
that if $t=-s$, $s>0,$ then
$u(\cdot,-s)=\overline{e^{is(-\Delta)}\overline{u_0}}$, gives in turn
\eqref{eq:free}.
\end{remark}

\begin{remark}
\label{rem:eq_kernel}  We remark that, for every $y\in{\mathbb{R}}^N$ fixed,
the function $K(\cdot,y)$  formally solves the equation
\begin{equation*}
\mathcal{L}_{{\mathbf{A}},a}K(\cdot,y)=|y|^2K(\cdot,y),
\end{equation*}
as one can easily check; in fact this fits with the free case, in which $K$
is the plane wave $e^{-ix\cdot y}$, up to constants.
\end{remark}

Formula \eqref{representation} is not present in the literature, at our
knowledge; moreover, as far as we understand, it should provide a
fundamental tool for several different applications. A first immediate
consequence of the representation formula \eqref{representation} is the
following corollary.

\begin{Corollary}[Time decay]
\label{cor:decay}  Let $a\in L^{\infty}({\mathbb{S}}^{N-1},{\mathbb{R}})$, ${%
\mathbf{A}}\in C^1({\mathbb{S}}^{N-1},{\mathbb{R}}^N)$ such that %
\eqref{transversality} and  \eqref{eq:hardycondition} hold, and $\mathcal{L}%
_{{\mathbf{A}},a}$ as in  \eqref{eq:hamiltonian}. If
\begin{equation}  \label{eq:claim}
\sup_{x,y\in{\mathbb{R}}^N}|K(x,y)|<+\infty,
\end{equation}
with $K$ as in \eqref{nucleo}, then the following estimate holds
\begin{equation}  \label{eq:decaygen}
\left\|e^{it\mathcal{L}_{{\mathbf{A}},a}}f(\cdot)\right\|_{L^p}\leq \frac{C}{%
|t|^{N\left(\frac12-\frac1p\right)}}\|f\|_{L^{p^{\prime }}}, \quad p\in[%
2,+\infty], \quad \frac1p+\frac1{p^{\prime }}=1,
\end{equation}
for some $C=C({\mathbf{A}}, a, p)>0$ which does not depend on $t$ and $f$.
\end{Corollary}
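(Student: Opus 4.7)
The plan is straightforward, since the representation formula \eqref{representation} reduces the decay estimate to a boundedness property of the kernel combined with interpolation against the free energy identity.

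First I would establish the endpoint $p=\infty$, $p'=1$. Fix $t>0$ and $f\in L^1\cap L^2$. Applying the pointwise estimate to \eqref{representation} gives
\begin{equation*}
|u(x,t)|\leq \frac{1}{(2t)^{N/2}}\,\sup_{\xi,\eta\in\R^N}|K(\xi,\eta)|\int_{\R^N}|f(y)|\,dy,
\end{equation*}
because $|e^{i|x|^2/(4t)}|=|e^{i|y|^2/(4t)}|=1$. Under hypothesis \eqref{eq:claim}, this yields
\begin{equation*}
\|e^{it\mathcal{L}_{\A,a}}f\|_{L^\infty}\leq \frac{C}{t^{N/2}}\|f\|_{L^1},
\end{equation*}
with $C=2^{-N/2}\|K\|_{L^\infty(\R^{2N})}$. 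A density argument extends this to all $f\in L^1$.

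Next I would record the trivial $L^2$--$L^2$ bound: by the discussion preceding Theorem \ref{Main}, under condition \eqref{eq:hardycondition} the operator $\mathcal{L}_{\A,a}$ admits a self-adjoint Friedrichs extension, so $e^{it\mathcal{L}_{\A,a}}$ is unitary on $L^2(\R^N)$ by the Spectral Theorem and
\begin{equation*}
\|e^{it\mathcal{L}_{\A,a}}f\|_{L^2}=\|f\|_{L^2}.
\end{equation*}
This is exactly estimate \eqref{eq:decaygen} at $p=p'=2$ (with no decay, matching $t^{-N(1/2-1/2)}=1$).

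With these two endpoints in hand, the full range $p\in[2,+\infty]$ follows by Riesz--Thorin complex interpolation applied to the linear operator $e^{it\mathcal{L}_{\A,a}}$ for each fixed $t>0$; the interpolated exponent satisfies $N(\frac12-\frac1p)$ by direct computation. Finally, for $t<0$ one invokes Remark \ref{rem:t_neg}: writing $u(\cdot,-s)=\overline{e^{is\mathcal{L}_{-\A,a}}\overline{u_0}}$, one observes that the hypotheses \eqref{transversality}, \eqref{eq:hardycondition} and \eqref{eq:claim} are invariant under $\A\mapsto -\A$ (the angular operator $L_{-\A,a}$ has the same spectrum as $L_{\A,a}$, via complex conjugation of eigenfunctions), so the same estimate carries over with $t$ replaced by $|t|$. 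There is no genuine obstacle here: the whole content of the corollary is encapsulated in the pointwise kernel bound \eqref{eq:claim}, which is assumed, and the work of verifying \eqref{eq:claim} for concrete potentials (3D-inverse square, 2D-Aharonov--Bohm) is postponed to later sections of the paper.
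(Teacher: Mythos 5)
Your proposal is correct and follows essentially the same route as the paper: the $p=\infty$ endpoint from \eqref{representation} and the assumed kernel bound \eqref{eq:claim}, the $p=2$ case from $L^2$ conservation, Riesz--Thorin for the intermediate exponents, and Remark \ref{rem:t_neg} for negative times. The only detail worth keeping in mind is that the integral in \eqref{representation} is an improper limit over balls $B_R$ (Remark \ref{rem:int}), but the pointwise bound survives the passage to the limit, so your argument stands.
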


\begin{proof}
  The proof is quite immediate. Formula
  \eqref{representation}, \eqref{eq:claim}, and Remark \ref{rem:t_neg} automatically yield
  \eqref{eq:decaygen} in the case $p=\infty$.  The rest of the range
  $p\geq2$ in \eqref{eq:decaygen} then follows by interpolation with
  the $L^2$ conservation.
\end{proof}

\begin{remark}
\label{rem:fundamental}  Once the matters to prove a time decay estimate are
reduced to the  study of the kernel $K$ in \eqref{nucleo}, the behavior of
the  spherical Bessel functions $j_{-\alpha_k}$ comes into play. The
crucial fact to notice is that condition \eqref{eq:claim} is  strictly
related to the requirement $\alpha_1\leq0$ which in other  words, by
\eqref{eq:alfabeta} means $\mu_1(A,a)\geq0$. It is easy to verify that
\eqref{eq:claim} implies that $\mu_1(A,a)\geq0$, while arguing as in
the proof of Lemma \ref{l:queue} we can easily check that $\mu_1(A,a)\geq0$ implies that $K$ is
locally bounded.

We are  strongly motivated by
the examples in the sequel to conjecture that  in fact conditions 
\eqref{eq:claim} and $\mu_1(A,a)\geq0$ are  equivalent.
\end{remark}

We now pass to give a couple of relevant examples in which the abstract
assumption \eqref{eq:claim} can be checked by hands, and the optimal time
decay can be obtained by working directly on the representation formula 
\eqref{representation}.

\subsection{Application 1: Aharonov-Bohm field}

We start with a 2D example of purely magnetic field, which is given by
potentials associated to thin solenoids: if the radius of the solenoid tends
to zero while the flux through it remains constant, then the particle is
subject to a $\delta$-type magnetic field, which is called \emph{%
Aharonov-Bohm} field. A vector potential associated to the Aharonov-Bohm
magnetic field in ${\mathbb{R}}^2$ has the form
\begin{equation}  \label{eq:Bohm}
{\boldsymbol{\mathcal{A}}}(x_1,x_2)=\alpha\bigg(-\frac{x_2}{|x|^2},\frac{x_1%
}{|x|^2}\bigg),\quad (x_1,x_2)\in{\mathbb{R}}^2,
\end{equation}
with $\alpha\in{\mathbb{R}}$ representing the circulation of ${\boldsymbol{%
\mathcal{A}}}$ around the solenoid. Notice that the potential in %
\eqref{eq:Bohm} is singular at $x=0$, homogeneous of degree $-1$ and
satisfies the transversality condition \eqref{transversality}.

This situation corresponds to problem \eqref{prob} with
\begin{equation*}
N=2, \quad {\mathbf{A}}(\theta)={\mathbf{A}}(\cos t,\sin t)=\alpha(-\sin
t,\cos t), \quad a(\theta)=0,
\end{equation*}
so that equation \eqref{prob} takes the form
\begin{equation}  \label{eq:AB}
iu_{t}= \left(-i\,\nabla+\alpha\bigg(-\frac{x_2}{|x|^2},\frac{x_1}{|x|^2} %
\bigg)\right)^{\!\!2}u,
\end{equation}
with $x=(x_1,x_2)\in {\mathbb{R}}^2$. In this case, an explicit calculation
yields
\begin{equation*}
\{\mu_k({\mathbf{A}},0):k\in{\mathbb{N}}\setminus\{0\}\}=\{(\alpha-j)^2:j\in{%
\mathbb{Z}}\},
\end{equation*}
(see e.g. \cite{lw} for details), and in particular
\begin{equation*}
\mu_1({\mathbf{A}},0)=\big(\mathop{\rm dist}(\alpha,{\mathbb{Z}})\big)%
^2\geq0.
\end{equation*}
If $\mathop{\rm dist}(\alpha,{\mathbb{Z}})\neq\frac12$, then all the
eigenvalues are simple and the eigenspace associated to the eigenvalue $%
(\alpha-j)^2$ is generated by $\psi(\cos t,\sin t)=e^{-ijt}$. If $%
\mathop{\rm dist}(\alpha,{\mathbb{Z}})=\frac12$, then all the eigenvalues
have multiplicity $2$. The following result is an interesting consequence of
Theorem \ref{Main}.

\begin{Theorem}[Time decay for Aharonov-Bohm]\label{thm:AB}  
Let $N=2$, $\alpha\in{\mathbb{R}}$, and define
\begin{equation*}
\mathcal{L}_{\alpha}:=\left(-i\,\nabla+\alpha\bigg(-\frac{x_2}{|x|^2},\frac{%
x_1}{|x|^2} \bigg)\right)^{\!\!2}.
\end{equation*}
Then the following estimate holds
\begin{equation} \label{eq:decayAB}
  \left\|e^{it\mathcal{L}_{\alpha}}f(\cdot)\right\|_{L^p}\leq
  \frac{C}{ |t|^{2\left(\frac12-\frac1p\right)}}\|f\|_{L^{p^{\prime
      }}}, \quad p\in[2,+\infty], \quad \frac1p+\frac1{p^{\prime }}=1,
\end{equation}
for some constant $C=C(\alpha, p)>0$ which does not depend on $t$ and $f$.
\end{Theorem}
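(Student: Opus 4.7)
\bigskip

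The plan is to invoke Corollary \ref{cor:decay}: once we establish that the kernel $K$ associated to $\mathcal{L}_\alpha$ is uniformly bounded on $\R^2\times\R^2$, the estimate \eqref{eq:decayAB} is automatic. Since the Aharonov-Bohm problem is gauge equivalent under $\alpha\mapsto\alpha+n$, $n\in\Z$, and since $\alpha\in\Z$ reduces to the free case (already covered by Remark \ref{rem:free_case}), we may and will assume $\alpha\in(0,1)$.

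For such $\alpha$, the spectrum of the angular operator $L_{\mathbf{A},0}$ on $\mathbb{S}^1$ is $\{(\alpha-j)^2:j\in\Z\}$ with simple eigenvalues (if $\alpha\neq\tfrac12$) and normalized eigenfunctions $\psi_j(\theta)=(2\pi)^{-1/2}e^{-ij\theta}$. Writing $\beta=|j-\alpha|=-\alpha$ (with $\alpha_k,\beta_k$ as in \eqref{eq:alfabeta}, for $N=2$) and using $j_\nu(r)=J_\nu(r)$ in dimension two, the kernel \eqref{nucleo} becomes
\begin{equation*}
K(x,y)=\frac{1}{2\pi}\sum_{j\in\Z}i^{-|j-\alpha|}J_{|j-\alpha|}(|x||y|)\,e^{-ij(\theta_x-\theta_y)},
\end{equation*}
where $\theta_x,\theta_y$ are the polar angles of $x,y$. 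The whole problem is thus reduced to proving a \emph{uniform} bound, in $r=|x||y|\geq0$ and $\phi=\theta_x-\theta_y\in\R$, for the scalar sum $\mathcal{K}(r,\phi):=\sum_{j\in\Z}i^{-|j-\alpha|}J_{|j-\alpha|}(r)e^{-ij\phi}$. For $\alpha=0$ the Jacobi-Anger identity gives $\mathcal{K}(r,\phi)=e^{-ir\cos\phi}$, so the task is to control the deformation from this free identity when $\alpha\in(0,1)$.

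To obtain such a bound, I would split the series according to the sign of $j-\alpha$, namely setting $k=-j\geq0$ in the tail $j\leq0$ and $k=j\geq1$ in the tail $j\geq1$, which yields
\begin{equation*}
\mathcal{K}(r,\phi)=i^{-\alpha}\sum_{k=0}^{\infty}J_{\alpha+k}(r)\,e^{ik(\phi-\pi/2)}+i^{\alpha}\sum_{k=1}^{\infty}J_{k-\alpha}(r)\,e^{-ik(\phi+\pi/2)}.
\end{equation*}
Each of these two one-sided sums I would evaluate via Schl\"afli's integral representation
\begin{equation*}
J_\nu(r)=\frac{1}{\pi}\int_0^\pi\cos(\nu\tau-r\sin\tau)\,d\tau-\frac{\sin(\nu\pi)}{\pi}\int_0^{\infty}e^{-\nu s-r\sinh s}\,ds,\qquad \nu\geq0,
\end{equation*}
interchanging summation and integration (justified by the uniform convergence of Lemma \ref{l:queue}, together with geometric summability in $k$ of the integrands due to the factors $e^{ik(\phi\mp\pi/2)}$ and $e^{-ks}$). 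After summation of the resulting geometric series one is left with one-dimensional integrals whose integrands are explicit and have only integrable singularities, so that each piece can be bounded independently of $(r,\phi)$.

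The main obstacle is exactly this last step: carrying out the interchange and verifying that the remaining contour/Sommerfeld integrals are uniformly bounded, because the oscillatory factor $e^{ir\sin\tau}$ provides no decay at large $r$ and the boundary point $\tau=\pm\pi$ coincides with a geometric series on the unit circle. I would handle this by a stationary-phase/Abel-summation argument near $\tau=\pm\pi$, exploiting the fact that $\mathrm{dist}(\alpha,\Z)>0$ keeps the denominators $1-e^{\pm i(\phi\mp\pi/2)\mp i\pi}$ harmless away from a single angle, and that at the remaining critical angle the phase $e^{-ir\cos\phi}$ reproduces precisely the free Jacobi-Anger identity, the residual contribution being an explicit bounded function of $(\alpha,\phi)$. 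Putting the two pieces together gives $\|\mathcal{K}\|_{L^\infty(\R\times\R)}<\infty$, hence $\sup_{x,y}|K(x,y)|<\infty$, and Corollary \ref{cor:decay} closes the argument.
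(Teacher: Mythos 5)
Your overall architecture matches the paper's: reduce to Corollary \ref{cor:decay}, write the kernel as a scalar function of $r=|x||y|$ and $\phi=\theta_x-\theta_y$, and split the bilateral sum over $j\in\Z$ according to the sign of $j-\alpha$. The normalization and the identification $j_\nu=J_\nu$, $\beta_k=-\alpha_k=|j-\alpha|$ for $N=2$ are all correct, and the gauge reduction to $\alpha\in(0,1)$ is legitimate (multiplication by $e^{in\theta}$ is a unimodular multiplier, so it preserves all the $L^p\to L^{p'}$ bounds). The divergence from the paper occurs at the decisive step, and that is where your argument has a genuine gap.

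After the interchange of sum and integral in Schl\"afli's representation, the geometric series $\sum_k w^k e^{ik\tau}$ with $w=e^{i(\phi\mp\pi/2)}$ sums to $\bigl(1-we^{i\tau}\bigr)^{-1}$, which has a pole \emph{on} the contour $\tau\in[0,\pi]$ at $\tau_0=-\arg w$ for a full range of angles $\phi$ — not only at the endpoints $\tau=\pm\pi$ as you suggest. The termwise sum therefore does not produce an integrand with ``only integrable singularities'': it produces a principal-value integral with a $1/(\tau-\tau_0)$ singularity whose location moves with $\phi$ and can collide with the stationary point $\tau=\pi/2$ of the phase $r\sin\tau$. Obtaining a bound uniform in $r\ge 0$ \emph{and} in the position of $\tau_0$ is precisely the hard part; your ``stationary-phase/Abel-summation argument'' names the obstacle without resolving it (compare the considerable effort the paper expends on the analogous collision of singularities in the inverse-square case, Section \ref{sec:inverse}, via Plemelj--Sokhotskyi and arc decompositions). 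There is a similar, milder issue in the Sommerfeld piece, where $\sum_k(-we^{-s})^k$ produces a factor behaving like $(s+\mathop{\rm dist}(\phi+\tfrac\pi2,2\pi\Z))^{-1}$ near $s=0$, giving logarithmic blow-up in $\phi$ for each piece separately.

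The paper sidesteps all of this with a different device: using the recurrence $J_\nu'(z)=\tfrac12(J_{\nu-1}(z)-J_{\nu+1}(z))$, each one-sided tail telescopes under $\frac{d}{dz}$, so that $F(z,s)=i^{-\alpha}S_1+i^{\alpha}S_3$ satisfies the first-order linear ODE $\frac{d}{dz}F+(i\cos s)F=g(z,s)$ with $g$ a fixed finite combination of Bessel functions. Integrating the ODE reduces the uniform bound to estimating $\int_0^z e^{iz'(\cos s\pm1)}J_\sigma(z')\,dz'$, which via the large-argument asymptotics of $J_\sigma$ becomes a Fresnel integral; the potentially dangerous factors $(e^{is}\pm1)/\sqrt{\cos s\pm1}$ are bounded, and no principal values or colliding singularities ever appear. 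If you want to keep your contour-integral route you would need to supply the uniform PV estimates; otherwise the recurrence/ODE trick is the missing idea that closes the argument.
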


\begin{remark}
Since  $\text{curl}\,A(x)\equiv0$ if $x\neq0$, the action of the magnetic
field in the  Aharonov-Bohm case is concentrated at the origin. However the
potential ${\mathbf{A}}$ cannot be  eliminated by gauge transformations;
this is in fact the peculiar  property of Aharonov-Bohm fields, which indeed
describe an  interesting difference between the classical and quantum
version of  the electromagnetic theory. Due to the above remark, estimates  %
\eqref{eq:decayAB} are not trivial, as far as we understand, and at  our
knowledge they are not known. We finally stress that the  algebraic
structure of Aharonov-Bohm potentials is exactly the one  which has been
used in \cite{FG} in order to disprove the dispersion  (in that case
Strichartz inequalities) in the case of magnetic field  which decay less
than $|x|^{-1}$ at infinity, in dimension  $N\geq3$. In 2D, counterexamples
as the ones in \cite{FG} are  missing, and it is still unclear what might
happen for potentials with  less decay than the one in \eqref{eq:AB}.
\end{remark}

\subsection{Application 2: The inverse square potential}

We also present an application of formula \eqref{representation} in the case
of perturbation of the Laplace operator in dimension $N=3$ with an inverse
square electric potential; more precisely, we consider problem \eqref{prob}
with ${\mathbf{A}}=0$ and $a(\theta)=a=\mbox{constant}$, so that the
hamiltonian \eqref{eq:hamiltonian} takes the form
\begin{equation}  \label{eq:inverse}
\mathcal{L}_{a}:=-\Delta +\frac{a}{|x|^2},\quad\text{in }{\mathbb{R}}^3,
\end{equation}
and condition (\ref{eq:hardycondition}) reads as $a>-\frac14$. Since in this
case the angular eigenvalue problem \eqref{eq:angular} becomes
\begin{equation}  \label{Y}
\left\{
\begin{array}{l}
-\Delta_{\mathbb{S}^{2}}\psi_{k}=(\mu_{k}({\mathbf{0}},a)-a) \psi_{k} ,\text{
in } \mathbb{S} ^{2}, \\
\|\psi_{k}\|_{L^{2}(\mathbb{S}^{2})}=1,%
\end{array}%
\right.
\end{equation}
we have that $\{\psi_{k}\}_{k=1}^{\infty}$ are the well known spherical
harmonics and
\begin{equation*}
\mu_{k}({\mathbf{0}},a)=a + \mu_{k}({\mathbf{0}},0)\,\,.
\end{equation*}
As in Remark \ref{rem:free_case},  for every $\ell\geq0$, let $%
\{Y_{\ell,m}\}_{m=1,2,\dots,2\ell+1}$ be a $L^2(\mathbb{S}^{N-1},{\mathbb{C}}%
^N)$-orthonormal basis for the eigenspace of $-\Delta_{\mathbb{S}^{2}}$
associated to the $\ell$-th eigenvalue $\lambda_\ell=\ell(\ell+1)$  of $%
-\Delta_{\mathbb{S}^{2}}$ (which has multiplicity $m_\ell=2\ell+1$), with $%
Y_{\ell,m}$ being spherical harmonics of degree $\ell$. Hence we have that
\begin{align}\label{eq:alfa1}
&\text{if }k=1, \text{ then }
  \mu_1({\mathbf{0}},a)=a,\quad\alpha_1=\tfrac12-%
  \sqrt{\tfrac14+a}, \\
\label{eq:alpha>1}&\text{if } k>1\text{ and } \ell^2<k\leq (\ell+1)^2, \text{ then }
\begin{cases}
\mu_k({\mathbf{0}},a)=a+\ell(\ell+1)= \\
\alpha_k=\frac12-\sqrt{\big(\ell+\tfrac12\big)^2+a}%
\end{cases}%
.
\end{align}
Notice that $\alpha_1\leq 0$ if and only if $a\geq0$. We define the well
known zonal functions
\begin{equation}  \label{eq:zonal}
Z_{\theta }^{(\ell)}( \theta^{\prime })=\sum _{m=1}^{2\ell+1}
Y_{\ell,m}(\theta) \overline{Y_{\ell,m}(\theta^{\prime })},\quad
\theta,\theta^{\prime }\in {\mathbb{S}}^2, \quad \ell=0,1,2,\dots.
\end{equation}
The study of the kernel in \eqref{nucleo}, which can be rewritten as
\begin{equation}  \label{eq:Sinverse}
K(x,y)=\sum_{\ell=0}^{\infty }i^{-\sqrt{(\ell+1/2)^2+a}}\,j_{ -\frac12+\sqrt{%
(\ell+1/2)^2+a}} \big(|x||y|\big)Z_{\frac{x}{|x|} }^{(\ell)}\Big( \tfrac{y}{%
|y|} \Big),
\end{equation}
permits to prove the following result.

\begin{Theorem}[Time decay for inverse square potentials]\label{thm:inversesquare}
Let $N=3$, $a>-\frac14$, and define $\mathcal{L}_a$ by \eqref{eq:inverse}.
\begin{itemize}
\item[i)] If $a\geq0$, then the following estimates hold
\begin{equation}  \label{eq:decayinverse}
\left\|e^{it\mathcal{L}_{a}}f(\cdot)\right\|_{L^p}\leq \frac{C}{
|t|^{3\left(\frac12-\frac1p\right)}}\|f\|_{L^{p^{\prime }}}, \quad p\in[
2,+\infty], \quad \frac1p+\frac1{p^{\prime }}=1,
\end{equation}
for some constant $C=C(a, p)>0$ which does not depend on $t$ and $f$.

\item[ii)] If $-\frac14<a<0$, let  $\alpha_1$ as in (\ref{eq:alfa1}), and define
\begin{equation*}
\|u\|_{p,\alpha_1}:=\bigg(\int_{{\mathbb{R}}^3}(1+|x|^{-\alpha_1})^{2-p}|u(x)|^p\,dx\bigg)^{\!1/p}, \quad
p\geq1.
\end{equation*}
Then the following estimates hold
\begin{equation}  \label{eq:decayinverse2}
\left\|e^{it\mathcal{L}_{a}}f(\cdot)\right\|_{p,\alpha_1}\leq \frac{%
C(1+|t|^{\alpha_1})^{1-\frac{2}{p}}}{|t|^{3\left(\frac12-\frac1p\right)}}%
\|f\|_{p',\alpha_1}, \quad p\geq2, \quad \frac1p+\frac1{p^{\prime
}}=1,
\end{equation}
for some constant $C=C(a, p)>0$ which does not depend on $t$ and $f$.
\end{itemize}
\end{Theorem}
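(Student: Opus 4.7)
The plan is to deduce both parts from Theorem \ref{Main} by performing a careful analysis of the kernel \eqref{eq:Sinverse}. The starting point is the explicit identification of the zonal functions \eqref{eq:zonal} with Gegenbauer polynomials, namely
\begin{equation*}
Z_{\theta}^{(\ell)}(\theta')=\frac{2\ell+1}{4\pi}P_\ell(\theta\cdot\theta'),
\end{equation*}
together with the uniform bound $|P_\ell(t)|\leq 1$ for $t\in[-1,1]$. Hence the task of controlling $K(x,y)$ reduces to estimating the scalar series
\begin{equation*}
\mathcal{S}(r):=\sum_{\ell=0}^{\infty}(2\ell+1)\bigl|j_{-\alpha_{k(\ell)}}(r)\bigr|,\qquad r=|x||y|,
\end{equation*}
where $\alpha_{k(\ell)}=\tfrac12-\sqrt{(\ell+\tfrac12)^2+a}$ and $j_{-\alpha_{k(\ell)}}(r)=r^{-1/2}J_{\nu_\ell}(r)$ with $\nu_\ell:=\sqrt{(\ell+\tfrac12)^2+a}$.

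Next I would analyze $\mathcal{S}(r)$ by splitting the range of $r$. In the small-$r$ regime I would invoke the power series of $J_\nu$ to get $|j_{-\alpha_{k(\ell)}}(r)|\lesssim r^{-\alpha_{k(\ell)}}/(2^{\nu_\ell}\Gamma(\nu_\ell+1))$; the factorial decay easily beats the linear growth of $2\ell+1$, so $\mathcal{S}(r)$ is bounded for $r$ in a compact neighborhood of the origin away from zero. In the large-$r$ regime I would use the uniform Landau-type bound on $J_\nu$ together with the exponential decay that kicks in when $\nu_\ell\gg r$ (the transition occurs around $\ell\sim r$), so only finitely many terms $\ell\lesssim r$ contribute, and each carries an oscillatory $r^{-1/2}$ factor compensating the factor $\ell$ that survives. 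Putting the estimates together, one obtains, for $a\geq 0$ (so that $\alpha_1\leq 0$), the uniform bound
\begin{equation*}
\sup_{x,y\in\R^3}|K(x,y)|<+\infty,
\end{equation*}
which via Corollary \ref{cor:decay} immediately yields \eqref{eq:decayinverse}. For $-\tfrac14<a<0$ (so that $\alpha_1>0$) the same analysis applies verbatim to $K-K_0$, where $K_0$ denotes the $\ell=0$ term; the only obstruction is $K_0$ itself, which satisfies $|K_0(x,y)|\lesssim(|x||y|)^{-\alpha_1}\wedge(|x||y|)^{-1}$. Since $(|x||y|)^{-\alpha_1}\leq|x|^{-\alpha_1}|y|^{-\alpha_1}\leq(1+|x|^{-\alpha_1})(1+|y|^{-\alpha_1})$, this gives
\begin{equation*}
|K(x,y)|\leq C\,(1+|x|^{-\alpha_1})(1+|y|^{-\alpha_1}).
\end{equation*}

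To conclude part (ii) I would plug this bound into the representation formula \eqref{representation} with $\sqrt{2t}$-rescaled arguments. The ratio $(1+(2t)^{\alpha_1/2}|x|^{-\alpha_1})/(1+|x|^{-\alpha_1})$ is bounded by $1+(2t)^{\alpha_1/2}$, while $(1+(2t)^{\alpha_1/2}|y|^{-\alpha_1})\leq(1+(2t)^{\alpha_1/2})(1+|y|^{-\alpha_1})$. These two observations give the weighted endpoint estimate
\begin{equation*}
\|e^{it\mathcal{L}_a}f\|_{\infty,\alpha_1}\leq\frac{C(1+t^{\alpha_1})}{t^{3/2}}\,\|f\|_{1,\alpha_1}\,,
\end{equation*}
since $(1+(2t)^{\alpha_1/2})^2\leq C(1+t^{\alpha_1})$. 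Combined with the $L^2$-conservation $\|e^{it\mathcal{L}_a}f\|_{L^2}=\|f\|_{L^2}$ (which is the case $p=2$ of \eqref{eq:decayinverse2}), a Riesz--Thorin interpolation between $L^2$ and the weighted $L^1\to L^\infty$ endpoint produces the full range $p\in[2,+\infty]$ of \eqref{eq:decayinverse2}, with the prefactor $(1+t^{\alpha_1})^{1-2/p}/t^{3(1/2-1/p)}$ arising from the geometric mean of the endpoint constants.

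The main obstacle I foresee is the delicate Bessel function analysis needed to prove $\sup_{r>0}\mathcal{S}(r)<\infty$ (restricted to $\ell\geq 1$ in the singular case): the naive pointwise bounds on $J_\nu$ are not summable against the $2\ell+1$ multiplicities, so one really must exploit the transition between the power-series regime ($\nu_\ell\gg r$) and the oscillatory regime ($\nu_\ell\lesssim r$), possibly via the Schl\"afli integral representation or sharp uniform asymptotics. Once the kernel estimates are established, the deduction of \eqref{eq:decayinverse} and \eqref{eq:decayinverse2} is essentially mechanical.
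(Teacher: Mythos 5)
Your overall architecture matches the paper's: reduce everything to pointwise bounds on $K$, get part (i) from Corollary \ref{cor:decay}, and get part (ii) from a weighted $L^1\to L^\infty$ endpoint combined with $L^2$ conservation and Riesz--Thorin (your factored bound $(1+|x|^{-\alpha_1})(1+|y|^{-\alpha_1})$ is an admissible weakening of the paper's $1+(|x||y|)^{-\alpha_1}$, and that deduction goes through). The isolation of the $\ell=0$ term when $-\frac14<a<0$ and its bound $\min\{r^{-\alpha_1},r^{-1}\}$ are also correct.

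The genuine gap is the central step: reducing $\sup|K|$ to the absolute-value series $\mathcal{S}(r)=\sum_\ell(2\ell+1)|j_{-\alpha_{k(\ell)}}(r)|$. That series is \emph{not} uniformly bounded; it diverges linearly in $r$. Indeed, in the oscillatory regime $\nu_\ell\lesssim r$ the sharp pointwise bound is $|J_{\nu_\ell}(r)|\lesssim (r^2-\nu_\ell^2)^{-1/4}\lesssim r^{-1/2}$, so each term of $\mathcal{S}(r)$ is of size $\sim (2\ell+1)r^{-1}$ and there are $\sim r$ such terms, giving $\mathcal{S}(r)\gtrsim r$. This already happens in the free case $a=0$, where $K$ is the bounded plane wave $c\,e^{-ix\cdot y}$ but its Jacobi--Anger expansion is not absolutely summable to a bounded quantity. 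No refinement of pointwise Bessel asymptotics (Schl\"afli, uniform Airy-type, etc.) can repair this, because the obstruction is not the size of the individual terms but the cancellation you discard by replacing $Z_\theta^{(\ell)}(\theta')$ with $\frac{2\ell+1}{4\pi}$ and $i^{-b_\ell}J_{\nu_\ell}(r)$ with $|J_{\nu_\ell}(r)|$. The paper's proof is organized precisely around retaining that cancellation: it writes the tail of the series as the (bounded) plane wave $e^{-ir\theta\cdot\theta'}$ plus the \emph{difference} series $\sum_{\ell\ge\ell_0}\big(i^{-b_\ell}j_{-a_\ell}(r)-i^{-(\ell+\frac12)}j_\ell(r)\big)Z_\theta^{(\ell)}(\theta')$, and then estimates this difference via the Schl\"afli contour representation, the expansion $(iz)^{\ell+\frac12-b_\ell}-1=-\frac{a}{2\ell+1}\log(iz)+O(\ell^{-2})$, the Legendre generating function $\sum_\ell P_\ell(t)w^\ell=(1-2wt+w^2)^{-1/2}$, and a singular-integral (principal value / Plemelj--Sokhotskyi) analysis near $\theta\cdot\theta'=\pm1$. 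Without some such mechanism exploiting oscillation both in $\ell$ and in $z$, your proof of the key estimates $\sup|K|<\infty$ (for $a\ge0$) and $|K|\lesssim 1+(|x||y|)^{-\alpha_1}$ (for $a<0$) does not close, and these are the whole content of the theorem.
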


\begin{remark}
As far as we know, the best dispersive results concerning this kind  of
operators are about Strichartz estimates, and have been obtained  by Burq,
Planchon, Stalker and Tahvildar-Zadeh in  \cite{BPSTZ1,BPSTZ}. As a fact,
estimates \eqref{eq:decayinverse}  imply the ones obtained in \cite{BPSTZ1},
by the standard  Ginibre-Velo and Keel-Tao techniques in \cite{GV1, KT}. On
the other  hand, in \cite{BPSTZ} the authors can treat more general
potentials with critical decay, including e.g. the cases in  which $%
a=a(x/|x|)$ is a 0-degree homogeneous function; in addition,  we think that
the restriction $N=3$ in Theorem  \ref{thm:inversesquare} is not in fact a
relevant obstruction. We  are motivated to claim that a deeper analysis of
formula  \eqref{representation} should permit to prove the analog to Theorem
\ref{thm:inversesquare} in the more general case $a=a(x/|x|)$, but  this
will not be treated in the present paper.

Moreover, notice that $\alpha_1>0$, in the range $-\frac14<a<0$, see  %
\eqref{eq:alfa1}, so that the decay in \eqref{eq:decayinverse2} is  weaker
than the usual one. We find it an interesting phenomenon,  since on the
other hand the usual Strichartz estimates are still  true in this range, as
proved in \cite{BPSTZ1}. Estimates \eqref{eq:decayinverse2} are presumably sharp and, at our
knowledge, new.
\end{remark}

The rest of the paper is organized as follows. In Section \ref%
{sec:functional-setting}, we describe the functional setting in which we
work, in order to prepare the proof of the main result, Theorem \ref{Main};
Section \ref{sec:spectr-harm-magn} is then devoted to the study of the
spectral properties of a magnetic harmonic oscillator with inverse square
potential, denoted by $T_{{\mathbf{A}},a}$ (see formula \eqref{operator}),
which comes into play when a suitable ansatz (formula \eqref{varphi}) is
stated; finally, Section \ref{sec:main-theor-repr} is devoted to the proof
of Theorem \ref{Main}, while in the last Sections \ref{sec:ahar-bohm-magn}
and \ref{sec:inverse} we prove the applications, Theorems \ref{thm:AB} and %
\ref{thm:inversesquare}.

\section{Functional setting}

\label{sec:functional-setting} Let us define the following Hilbert spaces:

\begin{itemize}
\item the space ${\mathcal{H}}$ as the completion of $C^{\infty}_{\mathrm{c}%
}({\mathbb{R}}^N\setminus\{0\},{\mathbb{C}})$ with respect to the norm
\begin{equation*}
\|\phi\|_{{\mathcal{H}}}=\bigg(\int_{{\mathbb{R}}^N}\bigg(|\nabla\phi(x)|^2+
\Big(|x|^2+\frac1{|x|^2}\Big)|\phi(x)|^2\bigg) \,dx\bigg)^{\!\!1/2};
\end{equation*}

\item {the space $H$} as the completion of $C^{\infty}_{\mathrm{c}}({\mathbb{%
R}}^N,{\mathbb{C}})$ with respect to the norm
\begin{equation*}
\|\phi\|_{H}=\bigg(\int_{{\mathbb{R}}^N}\Big(|\nabla\phi(x)|^2+  \big(|x|^2+1%
\big)|\phi(x)|^2\Big) \,dx\bigg)^{\!\!1/2};
\end{equation*}

\item {the space ${\mathcal{H}}_{{\mathbf{A}}}$} as the completion of $%
C^{\infty}_{\mathrm{c}}({\mathbb{R}}^N\setminus\{0\},{\mathbb{C}})$ with
respect to the norm
\begin{equation*}
\|\phi\|_{{\mathcal{H}}_{\mathbf{A}}}=\bigg(\int_{{\mathbb{R}}^N}\Big(%
|\nabla_{\mathbf{A}}\phi(x)|^2+  \big(|x|^2+1\big)|\phi(x)|^2\Big) \,dx\bigg)%
^{\!\!1/2}
\end{equation*}
with $\nabla_{{\mathbf{A}}}\phi= \nabla\phi+i\,\frac {{\mathbf{A}}(x/|x|)}{%
|x|}\phi$.
\end{itemize}
From the above definition, it follows immediately that
\begin{equation}  \label{eq:cont_emb}
\mathcal{H }\hookrightarrow H\quad\text{with continuous embedding}.
\end{equation}
A further comparison between the above defined spaces can be derived from
the well known diamagnetic inequality (see e.g. \cite{LL})
\begin{equation}  \label{eq:diamagnetic}
|\nabla |\phi|(x)|\leq \left|\nabla \phi(x)+i\frac{{\mathbf{A}}(x/|x|)}{|x|}%
\phi(x)\right|, \qquad N\geq2,
\end{equation}
which holds for a.e. $x\in{\mathbb{R}}^N$ and for all $\phi\in C^{\infty}_{%
\mathrm{c}}({\mathbb{R}}^N\setminus\{0\},{\mathbb{C}})$, and the classical
Hardy inequality (see e.g. \cite{GP,HLP})
\begin{equation}  \label{eq:hardy}
\int_{{\mathbb{R}}^N}|\nabla \phi(x)|^{2}\,dx\geq \bigg(\frac{N-2}{2}\bigg)%
^{\!\!2}\int_{{\mathbb{R}}^N} \frac{|\phi(x)|^2}{|x|^{2}}\,dx,
\end{equation}
which holds for all $\phi\in\mathcal{C}_{\mathrm{c}}^\infty({\mathbb{R}}^N,{%
\mathbb{C}})$ and $N\ge 3$. We notice that the presence of a vector
potential satisfying a suitable non-degeneracy condition allows to recover a
Hardy inequality even for $N=2$. Indeed, if $N=2$, \eqref{transversality}
holds, and
\begin{equation}  \label{eq:circuit}
\Phi_{\mathbf{A}}:=\frac1{2\pi}\int_0^{2\pi}\alpha(t)\,dt \not\in{\mathbb{Z}}%
,\quad \text{where }\alpha(t):={\mathbf{A}}(\cos t,\sin t)\cdot(-\sin t,\cos
t),
\end{equation}
then functions in $C^{\infty}_{\mathrm{c}}({\mathbb{R}}^N\setminus\{0\},{%
\mathbb{C}})$ satisfy the following Hardy inequality
\begin{equation}  \label{eq:hardyN2}
\Big(\min_{k\in{\mathbb{Z}}}|k-\Phi_{\mathbf{A}}|\Big)^2\int_{{\mathbb{R}}^2}%
\frac{|u(x)|^2}{|x|^2}\,dx \leq \int_{{\mathbb{R}}^2}\bigg|\nabla u(x)+i\,
\frac{{\mathbf{A}}\big({x}/{|x|}\big)} {|x|}\,u(x)\bigg|^2\,dx
\end{equation}
being $\big(\min_{k\in{\mathbb{Z}}}|k-\Phi_{\mathbf{A}}|\big)^2$ the best
constant, as proved in \cite{lw}.

Combining (\ref{eq:hardy}), (\ref{eq:diamagnetic}), and (\ref{eq:hardyN2}),
it is easy to verify that if $N\geq 3$, then $H=\mathcal{H}={\mathcal{H}}_{%
\mathbf{A}}$, being the norms $\|\cdot\|_{H}$, $\|\cdot\|_{{\mathcal{H}}}$
and $\|\cdot\|_{{\mathcal{H}}_{\mathbf{A}}}$ equivalent. If $N=2$ then $%
\mathcal{H}\varsubsetneq H$; on the other hand, if $N=2$ and (\ref%
{transversality}), (\ref{eq:circuit}) hold, from (\ref{eq:diamagnetic}) and (%
\ref{eq:hardyN2}) we deduce that $\mathcal{H}={\mathcal{H}}_{\mathbf{A}}$,
being the norms $\|\cdot\|_{{\mathcal{H}}}$, $\|\cdot\|_{{\mathcal{H}}_{%
\mathbf{A}}}$ equivalent.

From (\ref{eq:cont_emb}) and \cite[Proposition 6.1]{KW}, we also deduce that
\begin{equation}  \label{eq:compact_emb}
{\mathcal{H}} \text{ is compactly embedded into }L^{p}({\mathbb{R}}^{N})
\end{equation}
for all
\begin{equation*}
2\leq p<
\begin{cases}
2^{*}=\frac{2N}{N-2}, & \text{if }N\geq3, \\
+\infty, & \text{if }N=2.%
\end{cases}
\end{equation*}
The quadratic form $Q_{{\mathbf{A}},a}$ associated to ${\mathcal{L}}_{{%
\mathbf{A}},a}$, i.e.
\begin{align}  \label{eq:qf}
&Q_{{\mathbf{A}},a}:{\mathcal{D}}^{1,2}_{*}({\mathbb{R}}^N,{\mathbb{C}})\to{%
\mathbb{R}}, \\
& Q_{{\mathbf{A}},a}(\phi):=\int_{{\mathbb{R}}^N} \bigg[ \big| \nabla_{%
\mathbf{A}}\phi(x)\big|^2 -\frac{a\big({x}/{|x|}\big)}{|x|^2}|\phi(x)|^2%
\bigg]\,dx,  \notag
\end{align}
with ${\mathcal{D}}^{1,2}_{*}({\mathbb{R}}^N,{\mathbb{C}})$ being the
completion of $C^\infty_{\mathrm{c}}({\mathbb{R}}^N\setminus\{0\},{\mathbb{C}%
})$ with respect to the norm
\begin{equation*}
\|u\|_{{\mathcal{D}}^{1,2}_{*}({\mathbb{R}}^N,{\mathbb{C}})}:=\bigg(\int_{{%
\mathbb{R}}^N}\bigg(\big|\nabla u(x)\big|^2+ \frac{|u(x)|^2}{|x|^2}\bigg) %
\,dx\bigg)^{\!\!1/2},
\end{equation*}
is positive definite if and only if (\ref{eq:hardycondition}) holds, see
\cite[Lemma 2.2]{FFT}. In particular, assumption (\ref{eq:hardycondition})
ensures that the operator ${\mathcal{L}}_{{\mathbf{A}},a}$ is semibounded
from below, self-adjoint on $L^2$ with the natural form domain, and that
there exists some constant $C(N,{\mathbf{A}},a)>0$ such that
\begin{equation}  \label{eq:posdef}
\int_{{\mathbb{R}}^N} \bigg[ \big| \nabla_{\mathbf{A}}\phi(x)\big|^2 -\frac{a%
\big({x}/{|x|}\big)}{|x|^2}|\phi(x)|^2+\frac{|x|^2}4|\phi(x)|^2 \bigg]\,dx
\geq C(N,{\mathbf{A}},a)\|\phi\|_{\mathcal{H}}^2,
\end{equation}
for all $\phi\in\mathcal{H}$ (see \cite{FFT}).

Up to a \textit{pseudo conformal} change of variable, see \cite{KW},
equation (\ref{prob}) can be rewritten in terms of a quantum harmonic
oscillator with the singular electromagnetic potential, as stated in the
following lemma.

\begin{Lemma}
Let (\ref{eq:hardycondition}) hold and $u\in C({\mathbb{R}}; L^{2}({\mathbb{R%
}}^N))$ be  a solution to \eqref{prob}. Then
\begin{equation}  \label{varphi}
\varphi(x,t)= (1+t^2)^{\frac{N}{4}}u\big(\sqrt{1+t^2}x,t\big)e^{-it\frac{%
|x|^2}{4}}
\end{equation}
satisfies
\begin{align*}
&\varphi\in C({\mathbb{R}}; L^{2}({\mathbb{R}}^N)),\quad \varphi (x, 0)=
u(x,0), \\
&\|\varphi(\cdot,t)\|_{L^{2}({\mathbb{R}}^N)}=\|u(\cdot,t)\|_{L^{2}({\mathbb{%
R}}^N)} \text{ for all }t\in{\mathbb{R}},
\end{align*}
and
\begin{equation}  \label{varphieq}
i\dfrac{d\varphi}{d t}(x,t)= \dfrac{1}{(1+t^2)} \bigg({\mathcal{L}}_{{%
\mathbf{A}},a}\varphi(x,t)+\frac{1}{4}|x|^2 \varphi(x,t)\bigg).
\end{equation}
\end{Lemma}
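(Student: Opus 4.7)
The first three assertions are essentially formal: plugging $t=0$ into \eqref{varphi} gives $\varphi(x,0)=u(x,0)$ since $\sqrt{1+0}=1$ and $e^0=1$; for the norm identity one changes variables $y=\sqrt{1+t^2}\,x$ in $\|\varphi(\cdot,t)\|_{L^2({\mathbb R}^N)}^2$, and the Jacobian $(1+t^2)^{N/2}$ cancels the square of the prefactor $(1+t^2)^{N/4}$, the phase $e^{-it|x|^2/4}$ being of modulus one; continuity in $t$ into $L^2$ follows from $u\in C({\mathbb R};L^2)$ together with the strong $L^2$-continuity of the dilation and modulation operators in their parameters.

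The content of the lemma lies in verifying the PDE \eqref{varphieq}. I would do this by a direct computation resting on two structural observations about $\mathcal{L}_{\mathbf{A},a}$. First, the homogeneity of degree $-1$ of $\mathbf{A}(x/|x|)/|x|$ and of degree $-2$ of $a(x/|x|)/|x|^2$, combined with the chain rule, yields the scaling identity
\begin{equation*}
\mathcal{L}_{\mathbf{A},a}\bigl[u(\lambda\,\cdot\,,t)\bigr](x) \;=\; \lambda^{2}\,(\mathcal{L}_{\mathbf{A},a}u)(\lambda x,t),\qquad \lambda>0,
\end{equation*}
for the spatial action at fixed $t$. Second, for any smooth real phase $\Phi$ one has the gauge-type relation
\begin{equation*}
\Bigl(-i\nabla+\tfrac{\mathbf{A}(x/|x|)}{|x|}\Bigr)^{\!2}\bigl(e^{i\Phi}g\bigr)\;=\;e^{i\Phi}\Bigl(-i\nabla+\tfrac{\mathbf{A}(x/|x|)}{|x|}+\nabla\Phi\Bigr)^{\!2}g,
\end{equation*}
obtained by iterating $(-i\nabla+A)(e^{i\Phi}g)=e^{i\Phi}(-i\nabla+A+\nabla\Phi)g$.

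Taking $\Phi=-t|x|^2/4$, so that $\nabla\Phi=-tx/2$, and writing $D_{\mathbf{A}}=-i\nabla+\mathbf{A}(x/|x|)/|x|$, I would expand $(D_{\mathbf{A}}-tx/2)^{2}$ and use the elementary commutator $\sum_j[-i\partial_j,x_j]=-iN$ together with the transversality condition \eqref{transversality} (which kills the magnetic part $x\cdot\mathbf{A}(x/|x|)/|x|=0$) to obtain
\begin{equation*}
\bigl(D_{\mathbf{A}}-\tfrac{tx}{2}\bigr)^{2}\;=\;D_{\mathbf{A}}^{2}\,-\,t\,x\!\cdot\!D_{\mathbf{A}}\,+\,\tfrac{itN}{2}\,+\,\tfrac{t^{2}|x|^{2}}{4}.
\end{equation*}
Substituting $iu_{t}=\mathcal{L}_{\mathbf{A},a}u$ at the rescaled point $(\sqrt{1+t^2}\,x,t)$ into $\partial_t\varphi$ and using the scaling identity, the verification of \eqref{varphieq} reduces to matching four groups of terms: the replacement of $iu_t$ by $\mathcal{L}_{\mathbf{A},a}u$; the chain-rule contribution $i\lambda^{N/2}(t/\lambda)\,x\cdot\nabla u$ against the first-order piece coming from $-tx\cdot D_{\mathbf{A}}$ (where transversality is crucial); the $\tfrac{N}{2}\lambda^{N/2-1}\lambda'$ derivative of the prefactor against the commutator constant $itN/(2\lambda^{2})$; and the phase derivative $\tfrac{|x|^{2}}{4}\varphi$ combining with $\tfrac{t^{2}|x|^{2}}{4\lambda^{2}}\varphi$ through $1+t^{2}=\lambda^{2}$ to produce exactly $\tfrac{|x|^{2}}{4(1+t^{2})}\varphi$ on the right-hand side.

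The main subtlety is that $u$ is only assumed $L^{2}$-continuous in $t$, so the pointwise manipulations above are not a priori legitimate. I would first carry out the computation on a dense subspace of smooth initial data (for instance $u_{0}$ in the form domain of $\mathcal{L}_{\mathbf{A},a}$ or in the Schwartz class), where $u$ is regular enough for the chain rule to apply classically, and then extend to a general $u_{0}\in L^{2}$ by density, using the unitarity on $L^{2}$ of both the flow $e^{it\mathcal{L}_{\mathbf{A},a}}$ and the transformation $u\mapsto\varphi$. The most delicate algebraic point is the first-order cancellation, which hinges precisely on \eqref{transversality}; without it a leftover $x\cdot\mathbf{A}(x/|x|)/|x|$ contribution would prevent the identity from closing.
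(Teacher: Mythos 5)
Your computation is correct: the scaling identity from the $-1$ and $-2$ homogeneities, the gauge conjugation with $\Phi=-t|x|^2/4$, the commutator constant $itN/2$, and the cancellation of $x\cdot\mathbf{A}(x/|x|)/|x|$ via \eqref{transversality} all combine exactly as you describe, and the $|x|^2$ terms recombine through $1+t^2=\lambda^2$ to close the identity. The paper itself omits the proof entirely (deferring to the pseudo-conformal change of variable of Kavian--Weissler), so your direct verification, including the density argument to handle the fact that $u$ is only $C({\mathbb R};L^2)$, is precisely the standard argument the paper implicitly relies on.
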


A representation formula for solutions $u$ to (\ref{prob}) can be found by
expanding the transformed solution $\varphi$ to (\ref{varphieq}) in Fourier
series with respect to an orthonormal basis of $L^2({\mathbb{R}}^N)$
consisting of eigenfunctions of the following quantum harmonic oscillator
operator perturbed with singular homogeneous electromagnetic potentials
\begin{equation}  \label{operator}
T_{{\mathbf{A}},a}:{\mathcal{H}}\to {\mathcal{H}}^\star,\quad T_{{\mathbf{A}}%
,a}={\mathcal{L}}_{{\mathbf{A}},a}+\frac{1}{4}|x|^2
\end{equation}
acting as
\begin{multline}  \label{operator2}
{}_{{\mathcal{H}}^\star}\langle T_{{\mathbf{A}},a}v,w \rangle_{{\mathcal{H}}}
\\
= \int_{{\mathbb{R}}^N}\bigg(\nabla_{{\mathbf{A}}} v(x)\cdot\overline{%
\nabla_{{\mathbf{A}}} w(x)}-\frac{a(\frac{x}{|x|})}{|x|^2}\,v(x)\overline{%
w(x)} +\frac{|x|^2}{4} v(x) \overline{w(x)}\bigg)\,dx,
\end{multline}
for all $v,w\in{\mathcal{H}}$, where ${\mathcal{H}}^\star$ denotes the dual
space of ${\mathcal{H}}$ and ${}_{{\mathcal{H}}^\star}\langle
\cdot,\cdot\rangle_{{\mathcal{H}}}$ is the corresponding duality product.

\section{The spectrum of $T_{{\mathbf{A}},a}$}

\label{sec:spectr-harm-magn} From (\ref{eq:compact_emb}), (\ref{eq:posdef}),
and classical spectral theory, we can easily deduce the following abstract
description of the spectrum of $T_{{\mathbf{A}},a}$.

\begin{Lemma}
\label{Hilbert}  Let ${\mathbf{A}}\in C^1({\mathbb{S}}^{N-1},{\mathbb{R}}^N)$
and $a\in  L^{\infty}\big({\mathbb{S}}^{N-1}\big)$ such that %
\eqref{eq:hardycondition}  holds. Then the spectrum of the operator $T_{{%
\mathbf{A}},a}$ defined in  (\ref{operator}--\ref{operator2}) consists of a
diverging sequence of real  eigenvalues with finite multiplicity. Moreover,
there exists an  orthonormal basis of $L^{2}({\mathbb{R}}^{N})$ whose
elements belong to  ${\mathcal{H}}$ and are eigenfunctions of $T_{{\mathbf{A}%
},a}$.
\end{Lemma}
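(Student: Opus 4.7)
The strategy is standard spectral theory: exploit the coercivity \eqref{eq:posdef} together with the compact embedding \eqref{eq:compact_emb} to produce a compact self-adjoint inverse of $T_{\mathbf{A},a}$ on $L^2(\mathbb{R}^N)$, and then conclude via the spectral theorem for compact self-adjoint operators.

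First I would check that the sesquilinear form
\[
q(v,w):=\int_{\mathbb{R}^N}\bigg(\nabla_{\mathbf{A}} v\cdot\overline{\nabla_{\mathbf{A}} w}-\frac{a(x/|x|)}{|x|^2}v\overline{w}+\frac{|x|^2}{4}v\overline{w}\bigg)dx
\]
underlying \eqref{operator2} is Hermitian and defines on $\mathcal{H}$ a scalar product equivalent to $(\cdot,\cdot)_{\mathcal{H}}$. The lower (coercivity) bound $q(v,v)\geq C\|v\|_{\mathcal{H}}^2$ is exactly \eqref{eq:posdef}, while the upper bound $|q(v,w)|\leq C'\|v\|_{\mathcal{H}}\|w\|_{\mathcal{H}}$ follows from $a\in L^\infty(\mathbb{S}^{N-1})$, together with the equivalence of the norms $\|\cdot\|_{\mathcal{H}}$, $\|\cdot\|_{\mathcal{H}_{\mathbf{A}}}$ discussed in Section \ref{sec:functional-setting}; the Hermitian symmetry is a short computation relying on $\mathbf{A}$ being real-valued.

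Next, given $f\in L^2(\mathbb{R}^N)$, the continuous embedding $\mathcal{H}\hookrightarrow L^2(\mathbb{R}^N)$ makes $w\mapsto\int_{\mathbb{R}^N}f\,\overline{w}\,dx$ a continuous antilinear functional on $(\mathcal{H},q)$, so the Riesz representation theorem produces a unique $Sf\in\mathcal{H}$ with
\[
q(Sf,w)=\int_{\mathbb{R}^N}f\,\overline{w}\,dx\quad\text{for all }w\in\mathcal{H},
\]
equivalently $T_{\mathbf{A},a}(Sf)=f$ in $\mathcal{H}^\star$. The bounded map $S:L^2(\mathbb{R}^N)\to\mathcal{H}$ composed with the compact inclusion \eqref{eq:compact_emb} yields a compact operator $\widetilde S:L^2(\mathbb{R}^N)\to L^2(\mathbb{R}^N)$. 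The Hermitian symmetry of $q$ translates directly into self-adjointness of $\widetilde S$, and density of $\mathcal{H}$ in $L^2(\mathbb{R}^N)$ gives its injectivity.

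Finally I would apply the classical spectral theorem for compact self-adjoint injective operators on a separable Hilbert space to obtain an $L^2$-orthonormal basis $\{v_n\}_{n\geq 1}$ of eigenfunctions of $\widetilde S$ with real nonzero eigenvalues $\eta_n$ of finite multiplicity and $\eta_n\to 0$. Since $v_n=\eta_n^{-1}Sv_n$, each $v_n$ lies in $\mathcal{H}$, and unwinding the identity defining $S$ gives $T_{\mathbf{A},a}v_n=\eta_n^{-1}v_n$ in $\mathcal{H}^\star$, so the values $\mu_n:=\eta_n^{-1}$ form the required diverging sequence of real eigenvalues of finite multiplicity. No step looks really delicate; the only part worth checking carefully is the Hermitian symmetry of $q$, which is where the specific form of $\nabla_{\mathbf{A}}$ and the realness of $\mathbf{A}$ come into play.
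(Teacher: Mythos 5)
Your proposal is correct and is exactly the argument the paper has in mind: the authors do not write out a proof but simply invoke the compact embedding \eqref{eq:compact_emb}, the coercivity \eqref{eq:posdef}, and ``classical spectral theory,'' which is precisely the Lax--Milgram/Riesz construction of a compact self-adjoint inverse followed by the spectral theorem that you carry out. All the ingredients you flag (continuity and Hermitian symmetry of the form, injectivity and compactness of $\widetilde S$, density of $\mathcal H$ in $L^2$) do hold as you describe.
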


The following proposition gives a complete description of the spectrum of
the operator $T_{{\mathbf{A}},a}$.

\begin{Proposition}
\label{spectrum} The set of the eigenvalues of the operator $T_{{\mathbf{A}}%
,a}$ is
\begin{equation*}
\big\{ \gamma_{m,k}: k,m\in{\mathbb{N}}, k\geq 1\big\}
\end{equation*}
where
\begin{equation}  \label{eigenvalues}
\gamma_{m,k}=2m-\alpha_k+\dfrac N2, \quad \alpha_k=\frac{N-2}{2}-\sqrt{\bigg(%
\frac{N-2}{2}\bigg)^{\!\!2}+\mu_k({\mathbf{A}},a)},
\end{equation}
and $\mu_k({\mathbf{A}},a)$ is the $k$-th eigenvalue of the operator $L_{{%
\mathbf{A}},a}$ on the sphere $\mathbb{S}^{N-1}$. Each eigenvalue $%
\gamma_{m,k}$ has finite multiplicity equal to
\begin{equation*}
\#\bigg\{j\in{\mathbb{N}},j\geq 1: \frac{\gamma_{m,k}}{2}+\frac{\alpha_j}%
2-\frac N4\in{\mathbb{N}}\bigg\}
\end{equation*}
and a basis of the corresponding eigenspace is
\begin{equation*}
\left\{V_{n,j}: j,n\in{\mathbb{N}},j\geq  1,\gamma_{m,k}=2n-\alpha_j+\frac
N2 \right\},
\end{equation*}
where
\begin{equation}  \label{eigenvectors}
V_{n,j}(x)= |x|^{-\alpha_j}e^{-\frac{|x|^2}{4}}P_{j,n}\Big(\frac{|x|^2}{2}%
\Big) \psi_j\Big(\frac{x}{|x|}\Big),
\end{equation}
$\psi_j$ is an eigenfunction of the operator $L_{{\mathbf{A}},a}$ on the
sphere $\mathbb{S}^{N-1}$ associated to the $j$-th eigenvalue $\mu_{j}({%
\mathbf{A}},a)$ as in \eqref{angular},  and $P_{j,n}$ is the polynomial of
degree $n$ given by
\begin{equation*}
P_{j,n}(t)=\sum_{i=0}^n \frac{(-n)_i}{\big(\frac{N}2-\alpha_j\big)_i}\,\frac{%
t^i}{i!},
\end{equation*}
denoting as $(s)_i$, for all $s\in{\mathbb{R}}$, the Pochhammer's symbol $%
(s)_i=\prod_{j=0}^{i-1}(s+j)$, $(s)_0=1$.
\end{Proposition}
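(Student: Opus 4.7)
\medskip

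\textbf{Proof plan.} My plan is separation of variables, reducing the eigenvalue problem to a sequence of radial equations indexed by the angular spectrum, and then to the confluent hypergeometric / Laguerre equation.

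\emph{Step 1 (Separation).} The transversality condition \eqref{transversality} makes the radial and angular parts of $\mathcal{L}_{\mathbf{A},a}$ decouple: in polar coordinates $x=r\theta$ one has
$
\mathcal{L}_{\mathbf{A},a}
= -\partial_r^2-\tfrac{N-1}{r}\partial_r+\tfrac{1}{r^2}L_{\mathbf{A},a},
$
so $T_{\mathbf{A},a}=\mathcal{L}_{\mathbf{A},a}+\tfrac{1}{4}|x|^2$ acts on functions of the form $f(r)\psi_k(\theta)$ as multiplication of $\psi_k$ by the radial operator
$
\mathcal{R}_k f:=-f''-\tfrac{N-1}{r}f'+\tfrac{\mu_k(\mathbf{A},a)}{r^2}f+\tfrac{r^2}{4}f.
$
The angular eigenfunctions $\{\psi_k\}_{k\geq 1}$ from \eqref{angular} form an orthonormal basis of $L^2(\mathbb{S}^{N-1},\mathbb{C})$, so finding the spectrum of $T_{\mathbf{A},a}$ reduces to finding, for each $k$, the values of $\gamma\in\R$ for which $\mathcal{R}_k f=\gamma f$ has a nontrivial solution $f$ with $f(r)\psi_k(\theta)\in\mathcal{H}$.

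\emph{Step 2 (Reduction to Laguerre).} For fixed $k$, the ansatz
$
f(r)=r^{-\alpha_k}e^{-r^2/4}G\!\left(\tfrac{r^2}{2}\right)
$
is natural: the prefactor $r^{-\alpha_k}$ cancels the inverse-square term thanks to the defining relation $\alpha_k^2-(N-2)\alpha_k-\mu_k(\mathbf{A},a)=0$, while the Gaussian absorbs the harmonic term $r^2/4$. A direct computation (using $\beta_k=\tfrac{N-2}{2}-\alpha_k$) shows that with $s=r^2/2$ the equation $\mathcal{R}_k f=\gamma f$ becomes the generalized Laguerre equation
\begin{equation*}
sG''(s)+(\beta_k+1-s)\,G'(s)+n\,G(s)=0,\qquad
n:=\frac{\gamma-\beta_k-1}{2}=\frac{\gamma+\alpha_k-\tfrac{N}{2}}{2}.
\end{equation*}
It has a solution which is polynomial (hence acceptable) precisely when $n\in\N$; that solution is $L_n^{(\beta_k)}(s)$, a multiple of ${}_1F_1(-n;\beta_k+1;s)=P_{k,n}(s)$. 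This yields the eigenvalue $\gamma_{n,k}=2n-\alpha_k+\tfrac{N}{2}$ with eigenfunction $V_{n,k}$ as in \eqref{eigenvectors}. Membership of $V_{n,k}$ in $\mathcal{H}$ is straightforward: the Gaussian controls all weights at infinity, and at the origin $|V_{n,k}|\sim r^{-\alpha_k}$, which lies in $\mathcal{H}$ thanks to the bound $\alpha_k<\tfrac{N}{2}$ guaranteed by \eqref{eq:hardycondition}.

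\emph{Step 3 (Completeness and multiplicities).} Lemma~\ref{Hilbert} provides an $L^2$ orthonormal basis of eigenfunctions of $T_{\mathbf{A},a}$, each lying in $\mathcal{H}$. Take any such eigenfunction $V$ with eigenvalue $\gamma$; for each $r>0$ expand in the sphere basis,
$
V(r\theta)=\sum_{k\geq 1}f_k(r)\psi_k(\theta),
$
and observe that $\mathcal{R}_k f_k=\gamma f_k$ for every $k$. The second linearly independent solution of $\mathcal{R}_k f=\gamma f$ near $r=0$ behaves like $r^{-\frac{N-2}{2}-\sqrt{(\frac{N-2}{2})^2+\mu_k(\mathbf{A},a)}}$, whose $\mathcal{H}$-norm diverges because of the Hardy / magnetic-Hardy inequalities recalled in Section~\ref{sec:functional-setting}. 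Hence $f_k$ is forced to be a multiple of the solution constructed in Step~2, i.e.\ $r^{-\alpha_k}e^{-r^2/4}G_k(r^2/2)$ with $G_k$ a solution of the Laguerre equation; the requirement $f_k\in L^2$ with the Gaussian weight (and that $V\in L^2$) eliminates the exponentially growing Laguerre solution and forces $G_k$ to be a polynomial whenever $f_k\not\equiv 0$. This is possible only for the indices $k$ for which $n_k:=(\gamma+\alpha_k-\tfrac N2)/2$ is a nonnegative integer, and $V$ is then a linear combination of the corresponding $V_{n_k,k}$. This identifies the spectrum with $\{\gamma_{n,k}\}$, and the multiplicity of $\gamma$ with the number of $j\geq 1$ for which $(\gamma/2)+(\alpha_j/2)-N/4\in\N$, as claimed.

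The main obstacle is Step~3: one has to rule out the singular radial mode at the origin using only the information that $V\in\mathcal{H}$, which hinges on the diamagnetic and magnetic Hardy inequalities \eqref{eq:diamagnetic}--\eqref{eq:hardyN2} being sharp; the rest is a careful bookkeeping of the Laguerre ODE and of the indices.
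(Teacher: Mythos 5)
Your proposal is correct and follows essentially the same route as the paper: separation of variables in the basis $\{\psi_k\}$, the substitution $\phi_k(r)=r^{-\alpha_k}e^{-r^2/4}w_k(r^2/2)$ reducing the radial ODE to the Kummer/Laguerre equation, and elimination of the singular solution at the origin via the Hardy-type integrability condition and of the exponentially growing Kummer solution via the $L^2$ condition, leaving the polynomial (quantized) case $\tfrac{\gamma}{2}+\tfrac{\alpha_k}{2}-\tfrac{N}{4}\in\N$. The only cosmetic difference is that you phrase the radial equation directly as the generalized Laguerre equation, which the paper records separately in Remark \ref{rem:Laguerre}.
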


\proof Assume that $\gamma$ is an eigenvalue of $T_{{\mathbf{A}},a}$ and $%
g\in{\mathcal{H}}\setminus\{0\}$ is a corresponding eigenfunction, so that
\begin{equation}  \label{calculo}
\left(-i\,\nabla+\frac{{\mathbf{A}}\big(\frac{x}{|x|}\big)} {|x|}%
\right)^{\!\!2}g(x) +\dfrac{a\big(\frac{x}{|x|}\big)}{|x|^2}g(x) + \frac{%
|x|^2}{4}\, g(x)=\gamma\, g(x)
\end{equation}
in a weak ${\mathcal{H}}$-sense. From classical elliptic regularity theory,
$g\in C^{1,\alpha}_{\mathrm{loc}}({\mathbb{R}}^N\setminus\{0\},{\mathbb{C}})$%
.  Hence $g$ can be expanded as
\begin{equation*}
g(x)=g(r\theta)=\sum_{k=1}^\infty\phi_k(r)\psi_k(\theta) \quad \text{in }L^2(%
{\mathbb{S}}^{N-1}),
\end{equation*}
where $r=|x|\in(0,+\infty)$, $\theta=x/|x|\in{{\mathbb{S}}^{N-1}}$, and
\begin{equation*}
\phi_k(r)=\int_{{\mathbb{S}}^{N-1}}g(r\theta) \overline{\psi_k(\theta)}%
\,dS(\theta).
\end{equation*}
Equations \eqref{angular} and \eqref{calculo} imply that, for every $k$,
\begin{equation}  \label{ODE}
\phi^{\prime \prime }_{k}+\dfrac{N-1}{r}\phi^{\prime }_{k} +\left(\gamma-%
\dfrac{\mu_k}{r^2}-\dfrac{r^2}{4}\right)\phi_{k}=0 \quad\text{in }%
(0,+\infty).
\end{equation}
Since $g\in {\mathcal{H}}$, we have that
\begin{align}\label{regularidadL2}
\infty>\int_{{\mathbb{R}}^N}g^2(x)\,dx&=\int_{0}^{\infty} \!\bigg(\int_{{%
\mathbb{S}}^{N-1}}g^{2}(r\theta)\,dS(\theta)\bigg) r^{N-1}\,dr\\
&\notag\geq
\int_{0}^{\infty}r^{N-1}\phi_{k}^{2}(r)\,dr
\end{align}
and
\begin{equation}  \label{regularidadHardy}
\infty>\int_{{\mathbb{R}}^N}\dfrac{g^2(x)}{|x|^2}\,dx\geq\int_{0}^{%
\infty}r^{N-3}\phi_{k}^2(r)\,dr.
\end{equation}
For all $k=1,2,\dots$ and $t>0$, we define $w_{k}(t)=(2t)^{\frac{\alpha_k}{2}%
} e^{\frac{t}{2}}\phi_k(\sqrt{2t})$, with $\alpha_{k}$ as in (\ref%
{eigenvalues}). From \eqref{ODE}, $w_k$ satisfies
\begin{equation*}
t w_{k}^{\prime \prime }(t)+\left(\frac{N}{2}-\alpha_k-t\right)w^{\prime
}_{k}(t)- \left(\frac N 4-\frac{\alpha_k}{2}-\frac \gamma
2\right)w_{k}(t)=0\quad\text{in }(0,+\infty).
\end{equation*}
Therefore, $w_{k}$ is a solution of the well known Kummer Confluent
Hypergeometric Equation (see \cite{AS} and \cite{MA}). Then there exist $%
A_k,B_k\in{\mathbb{R}}$ such that
\begin{equation*}
w_k(t)=A_k M\Big(\frac N 4-\frac{\alpha_k}{2}-\frac \gamma 2,\frac
N2-\alpha_k,t\Big)
+B_k U\Big(\frac N 4-\frac{\alpha_k}{2}-\frac \gamma 2,\frac N2-\alpha_k,t%
\Big), \quad t\in (0,+\infty).
\end{equation*}
Here $M(c,b,t)$ and, respectively, $U(c,b,t)$ denote the Kummer function (or
confluent hypergeometric function) and, respectively, the Tricomi function
(or confluent hypergeometric function of the second kind); $M(c,b,t)$ and $%
U(c,b,t)$ are two linearly independent solutions to the Kummer Confluent
Hypergeometric Equation
\begin{equation*}
tw^{\prime \prime }(t)+(b-t)w^{\prime }(t)-cw(t)=0,\quad t\in (0,+\infty).
\end{equation*}
Since $\big(\frac N2-\alpha_k\big)>1$, from the well-known asymptotics of $U$
at $0$ (see e.g. \cite{AS}), we have that
\begin{equation*}
U\Big(\frac N 4-\frac{\alpha_k}{2}-\frac \gamma 2,\frac N2-\alpha_k,t\Big)
\sim \text{\textrm{const}}\,t^{1-\frac{N}{2}+\alpha_k} \quad\text{as }t\to
0^+,
\end{equation*}
for some $\text{\textrm{const}}\neq 0$ depending only on $N,\gamma$, and $%
\alpha_k$. On the other hand, $M$ is the sum of the series
\begin{equation*}
M(c,b,t)=\sum_{n=0}^\infty \frac{(c)_n}{(b)_n}\,\frac{t^n}{n!}.
\end{equation*}
We notice that $M$ has a finite limit at $0^+$, while its behavior at $\infty
$ is singular and depends on the value $-c=-\frac N 4+\frac{\alpha_k}{2}%
+\frac \gamma 2$. If $-\frac N 4+\frac{\alpha_k}{2}+\frac \gamma 2=m\in {%
\mathbb{N}}=\{0,1,2,\cdots\}$, then $M\big(\frac N 4-\frac{\alpha_k}{2}%
-\frac \gamma 2,\frac N2-\alpha_k,t\big)$ is a polynomial of degree $m$ in $t
$, which we will denote as $P_{k,m}$, i.e.,
\begin{equation*}
P_{k,m}(t)=M\Big(-m,{\textstyle{\frac N2}}-\alpha_k,t\Big)= \sum_{n=0}^m
\frac{(-m)_n}{\big(\frac{N}2-\alpha_k\big)_n}\,\frac{t^n}{n!}.
\end{equation*}
If $\big(-\frac N 4+\frac{\alpha_k}{2}+\frac \gamma 2\big)\not\in {\mathbb{N}%
}$, then from the well-known asymptotics of $M$ at $\infty$ (see e.g. \cite%
{AS}) we have that
\begin{equation*}
M\Big(\frac N 4-\frac{\alpha_k}{2}-\frac \gamma 2,\frac N2-\alpha_k,t\Big)
\sim \text{\textrm{const}}\,e^tt^{-\frac{N}{4}+\frac{\alpha_k}2-\frac{\gamma%
}{2}} \quad\text{as }t\to +\infty,
\end{equation*}
for some $\text{\textrm{const}}\neq 0$ depending only on $N,\gamma$, and $%
\alpha_k$.

Now, let us fix $k\in{\mathbb{N}}$, $k\geq 1$. From the above description,
we have that
\begin{equation*}
w_k(t)\sim \mathrm{const\,}B_k t^{1-\frac{N}{2}+\alpha_k} \quad\text{as }%
t\to 0^+,
\end{equation*}
for some $\text{\textrm{const}}\neq 0$, and hence
\begin{equation*}
\phi_k(r)=r^{-\alpha_k}e^{-\frac{r^2}{4}}w_k\Big(\frac{r^2}{2}\Big)\sim
\mathrm{const\,}B_k r^{-(N-2)+\alpha_k} \quad\text{as }r\to 0^+,
\end{equation*}
for some $\text{\textrm{const}}\neq 0$. Therefore, condition (\ref%
{regularidadHardy}) can be satisfied only for $B_k=0$. If $\frac{\alpha_k}{2}%
+\frac{\gamma}{2}-\frac{N}{4}\not\in {\mathbb{N}}$, then
\begin{equation*}
w_k(t)\sim \mathrm{const\,}A_ke^t t^{-\frac{N}{4}+\frac{\alpha_k}2-\frac{%
\gamma}{2}} \quad\text{as }t\to +\infty,
\end{equation*}
for some $\text{\textrm{const}}\neq 0$, and hence
\begin{equation*}
\phi_k(r)=r^{-\alpha_k}e^{-\frac{r^2}{4}}w_k\Big(\frac{r^2}{4}\Big)\sim
\mathrm{const\,}A_k r^{-\frac{N}{2}-\gamma}e^{r^2/4} \quad\text{as }r\to
+\infty,
\end{equation*}
for some $\text{\textrm{const}}\neq 0$. Therefore, condition (\ref%
{regularidadL2}) can be satisfied only for $A_k=0$.  If $\frac{\alpha_k}{2}+%
\frac{\gamma}{2}-\frac{N}{4}=m\in {\mathbb{N}}$, then $r^{-\alpha_k}e^{-%
\frac{r^2}{4}}P_{k,m}\big(\frac{r^2}{2}\big)$ solves (\ref{ODE}); moreover
the function
\begin{equation*}
V_{m,k}(x)=|x|^{-\alpha_k}e^{-\frac{|x|^2}{4}}P_{k,m}\Big(\frac{|x|^2}{2}%
\Big)
\psi_k\Big(\frac{x}{|x|}\Big)
\end{equation*}
belongs to ${\mathcal{H}}$, thus providing an eigenfunction of $L$. \qed

\begin{remark}
\label{rem:harmonic}  If $a(\theta)\equiv0$, ${\mathbf{A}}\equiv{\mathbf{0}}$%
, the spectrum of  $L_{{\mathbf{0}},0}$ is described in Remark \ref%
{rem:free_case}, so  that the spectrum of $T_{{\mathbf{0}},0}$ is $\frac N2+{%
\mathbb{N}}$. Hence, in  this case we recover the eigenvalues of the
Harmonic oscillator  operator $-\Delta+\frac{|x|^2}4$ (see e.g. \cite{KW}).
\end{remark}

\begin{remark}
\label{rem:ortho}  It is easy to verify that
\begin{equation*}
\text{if }(m_1,k_1)\neq(m_2,k_2)\quad\text{then}\quad V_{m_1,k_1}\text{ and }
V_{m_2,k_2}\text{ are orthogonal in }{\  L^{2}({\mathbb{R}}^N)}.
\end{equation*}
By Lemma \ref{Hilbert}, it follows that
\begin{equation*}
\left\{ \widetilde V_{n,j}= \frac{V_{n,j}}{\|V_{n,j}\|_{L^{2}({\mathbb{R}}%
^N)}}: j,n\in{\mathbb{N}},j\geq  1\right\}
\end{equation*}
is an orthonormal basis of $L^{2}({\mathbb{R}}^N)$.
\end{remark}

\begin{remark}
\label{rem:Laguerre} Denoting by $L_{m}^{\alpha}(t)$ the generalized
Laguerre polynomials
\begin{equation*}
L_{m}^{\alpha}(t)=\sum_{n=0}^m (-1)^{n}{\binom{m+\alpha}{m-n}\,\frac{t^n}{n!}%
},
\end{equation*}
and $\beta_{k}=\sqrt{\big(\frac{N-2}{2}\big)^{2}+\mu_k({\mathbf{A}},a)}$ so
that $\gamma_{m,k}=2m+\beta_k+1$, we can write
\begin{equation*}
P_{k,m}\bigg(\frac{|x|^2}{2}\bigg)= M\Big(-\frac{\gamma_{m,k}}{2}+\frac{%
\beta_k}{2}+\frac 12,1+\beta_{k},\frac{|x|^2}{2}\Big)=\binom{m+\beta_k}{m}%
^{\!\!-1} L_{m}^{\beta_k}\Big(\frac{|x|^2}{2}\Big).
\end{equation*}
From the well known orthogonality relation
\begin{equation*}
\int_{0}^{\infty} x^{\alpha} e^{-x} L_{n}^{\alpha}(x) L_{m}^{\alpha}(X)\,dx=%
\frac{\Gamma(n+\alpha+1)}{n!}\delta_{n,m},
\end{equation*}
where $\delta_{n,m}$ denotes the Kronecker delta, it is easy to check that
\begin{equation*}
\|V_{m,k}\|_{L^{2}({\mathbb{R}}^N)}^{2}= 2^{\beta_k}\Gamma(1+\beta_k)\binom{%
m+\beta_k}{m}^{\!\!-1}.
\end{equation*}
\end{remark}

\section{Proof of Theorem
  \protect\ref{Main}}\label{sec:main-theor-repr}

The proof of Theorem \ref{Main} uses Lemma \ref{l:queue}, which is
proved below.

\smallskip\noindent
\begin{pfn}{Lemma \ref{l:queue}}
From Theorem \ref{t:weyl} and Lemma \ref{l:stiautf} in the Appendix,
we deduce that there exist some $k_0\in\N$,
and $C_{i}>0$, $i=1,2$, such that for every $k>k_0$,
\begin{equation}\label{cond1}
-\alpha _{k}>C_{1}k^{\delta _{1}},
\end{equation}
and
\begin{equation}\label{cond2}
\vert \psi _{k}(\theta)\vert <C_{2}k^{\delta
_{2}}\quad\text{for all }\theta\in\SN,
\end{equation}
with $\delta_1=\frac1{N-1}$ and $\delta_2=\frac{2}{N-1}\lfloor
\frac{N-1}2\rfloor$.
From \eqref{cond1} and \eqref{cond2} it follows that, for all $k>k_0$, the $k$-th term of the
series  $K_{k}(x,y)=i^{-\beta _{k}}j_{-\alpha
_{k}}(|x||y|)\psi _{k}\big(\tfrac{x}{|x|}\big)\overline{\psi _{k}\big(\tfrac{%
y}{|y|}\big)}$ belongs to  $L_{\rm loc}^{\infty
}(\R^{2N},{\mathbb{C}})$. Furthermore, if we fix some compact set $\mathcal
K\Subset\R^N$, there exists $k_0'$ such that
$\big|\frac{e|x||y|}{2(-\alpha _{k}+%
\frac{N}{2})}\big|\leq \frac12$ for every $x,y\in\mathcal K$ and
$k> k_0'$. Therefore, for all  $x,y\in\mathcal K$ and
$k> k_0'$, we have that
\begin{eqnarray*}
\left\vert K_{k}(x,y)\right\vert  &\leq &\left\vert j_{-\alpha
_{k}}(|x||y|)\right\vert \left\vert \psi _{k}\big(\tfrac{x}{|x|}\big)%
\right\vert \left\vert \psi _{k}\big(\tfrac{y}{|y|}\big)\right\vert  \\
&\leq &Ck^{2\delta _{2}}\bigg(\frac{|x||y|}{2}\bigg)^{\!\!-\alpha
_{k}}\sum\limits_{m=0}^{\infty }\dfrac{1}{\Gamma (m+1)\Gamma (m-\alpha _{k}+%
\frac{N-2}{2}+1)}\bigg(\frac{|x||y|}{2}\bigg)^{\!\!2m} \\
&\leq &Ck^{2\delta _{2}}\frac{\big(\frac{|x||y|}{2}\big)^{\!\!-\alpha _{k}}%
}{\Gamma (-\alpha _{k}+\frac{N}{2})}e^{\big(\frac{|x||y|}{2}\big)%
^{\!\!2}}\leq C^{\prime }k^{2\delta _{2}}\bigg(\frac{e|x||y|}{2(-\alpha _{k}+%
\frac{N}{2})}\bigg)^{\!\!-\alpha _{k}}e^{\big(\frac{|x||y|}{2}\big)%
^{\!\!2}} \\
&\leq &C^{\prime \prime }k^{2\delta _{2}}\bigg(\frac{1}{2}\bigg)%
^{\!\!C_{1}k^{\delta _{1}}}\equiv M_{k}
\end{eqnarray*}%
where $C^{\prime \prime }$ depends on $\mathcal K$ but not on $k$. Weierstrass
M-test and convergence of $\sum_{k}M_{k}$ yields then the desired uniform
convergence.
\end{pfn}

We are now ready to prove our main result, the representation formula
given by Theorem \ref{Main}.

\begin{pfn}{Theorem \ref{Main}}
Let us expand the initial datum $u_{0}=u(\cdot
,0)=\varphi (\cdot ,0)$  in Fourier series with
respect to the orthonormal basis of $L^{2}({\mathbb{R}}^{N})$
introduced in Remark \ref{rem:ortho} as
\begin{equation}
u_{0}=\sum\limits_{\substack{ m,k\in {\mathbb{N}} \\ k\geq 1}}c_{m,k}%
\widetilde{V}_{m,k}\quad \text{in }L^{2}({\mathbb{R}}^{N}),\quad \text{where
}c_{m,k}=\int_{{\mathbb{R}}^{N}}u_{0}(x)\overline{\widetilde{V}_{m,k}(x)}%
\,dx,  \label{datoinicial}
\end{equation}%
and, for $t>0$, the function $\varphi (\cdot ,t)$ defined
in \eqref{varphi} as
\begin{equation}
\varphi (\cdot ,t)=\sum\limits_{\substack{ m,k\in {\mathbb{N}} \\ k\geq 1}}%
\varphi _{m,k}(t)\widetilde{V}_{m,k}\quad \text{in }L^{2}({\mathbb{R}}^{N}),
\label{eq:exp_varphi}
\end{equation}%
where
\begin{equation*}
\varphi _{m,k}(t)=\int_{{\mathbb{R}}^{N}}\varphi (x,t)\overline{\widetilde{V}%
_{m,k}(x)}\,dx.
\end{equation*}%
Since $\varphi (z,t)$ satisfies \eqref{varphieq}, we obtain that $\varphi
_{m,k}\in C^{1}({\mathbb{R}},{\mathbb{C}})$ and
\begin{equation*}
i\varphi _{m,k}^{\prime }(t)=\dfrac{\gamma _{m,k}}{1+t^{2}}\varphi
_{m,k}(t),\quad \varphi _{m,k}(0)=c_{m,k},
\end{equation*}%
which by integration yields $\varphi _{m,k}(t)=c_{m,k}e^{-i\gamma
_{m,k}\arctan t}$. Hence expansion (\ref{eq:exp_varphi}) can be rewritten as
\begin{equation*}
\varphi (z,t)=\sum\limits_{\substack{ m,k\in {\mathbb{N}} \\ k\geq 1}}%
c_{m,k}e^{-i\gamma _{m,k}\arctan t}\widetilde{V}_{m,k}(z)\quad \text{in }%
L^{2}({\mathbb{R}}^{N}),\quad \text{ for all }t>0.
\end{equation*}%
In view of \eqref{datoinicial}, the above series can be written as
\begin{equation*}
\varphi (z,t)=\sum\limits_{\substack{ m,k\in {\mathbb{N}} \\ k\geq 1}}%
e^{-i\gamma _{m,k}\arctan t}\bigg(\int_{{\mathbb{R}}^{N}}u_{0}(y)\overline{%
\widetilde{V}_{m,k}(y)}\,dy\bigg)\widetilde{V}_{m,k}(z),
\end{equation*}%
in the sense that, for all $t>0$, the above series converges
in $L^{2}({\mathbb{R}}^{N})$. Since $u_{0}(y)$ can be expanded~as
\begin{equation*}
u_{0}(y)=u_{0}\big(|y|\,\tfrac{y}{|y|}\big)=\sum_{j=1}^{\infty
}u_{0,j}(|y|)\psi _{j}\big(\tfrac{y}{|y|}\big)\quad \text{in }L^{2}({\mathbb{%
S}}^{N-1}),
\end{equation*}%
where $u_{0,j}(|y|)=\int_{{\mathbb{S}}^{N-1}}u_{0}(|y|\theta )\overline{\psi
_{j}(\theta )}\,dS(\theta )$, we conclude that
\begin{align*}
\varphi (z,t)& =\sum\limits_{\substack{ m,k\in {\mathbb{N}} \\ k\geq 1}}%
\frac{e^{-i\gamma _{m,k}\arctan t}}{\Vert V_{m,k}\Vert _{L^{2}}^{2}}%
V_{m,k}(z)\bigg(\int_{0}^{\infty }\!\!u_{0,k}(r)r^{N-1-\alpha _{k}}P_{k,m}(%
\tfrac{r^{2}}{2})e^{-\frac{r^{2}}{4}}\,dr\bigg) \\
& =\sum\limits_{k=1}^{\infty }\psi _{k}\big(\tfrac{z}{|z|}\big)\frac{%
e^{-i(\beta _{k}+1)\arctan t}}{2^{\beta _{k}}\Gamma (1+\beta _{k})}\!\left[
\sum\limits_{m=0}^{\infty }\frac{{\textstyle{\binom{m+\beta _{k}}{m}}}}{%
e^{i2m\arctan t}}\times \right.  \\
& \ \ \ \left. \times \bigg(\int_{0}^{\infty }\!\frac{u_{0,k}(r)}{%
|rz|^{\alpha _{k}}}P_{k,m}\big(\tfrac{r^{2}}{2}\big)P_{k,m}\big(\tfrac{%
|z|^{2}}{2}\big)e^{-\frac{r^{2}+|z|^{2}}{4}}r^{N-1}dr\!\bigg)\!\right] \!.
\end{align*}%
By \cite{AS} we know that
\begin{equation*}
P_{k,m}\Big(\frac{r^{2}}{2}\Big)=\dfrac{\Gamma (1+\beta _{k})}{\Gamma
(1+\beta _{k}+m)}e^{\frac{r^{2}}{2}}r^{-\beta _{k}}2^{\frac{\beta _{k}}{2}}%
\displaystyle\int_{0}^{\infty }e^{-t}t^{m+\frac{\beta _{k}}{2}}J_{\beta
_{k}}(\sqrt{2}r\sqrt{t})\,dt,
\end{equation*}%
where $J_{\beta _{k}}$ is the Bessel function of the first kind of order $%
\beta _{k}$. Therefore,
\begin{align*}
\varphi (z,t)& =4\sum\limits_{k=1}^{\infty }\psi _{k}\big(\tfrac{z}{|z|}\big)%
\frac{\Gamma (1+\beta _{k})}{e^{i(\beta _{k}+1)\arctan t}}\Bigg[%
\sum\limits_{m=0}^{\infty }\binom{m+\beta _{k}}{m}\frac{e^{-i2m\arctan t}}{%
(\Gamma (1+\beta _{k}+m))^{2}}\times  \\
& \ \ \ \times \!\bigg(\!\int_{0}^{\infty }\frac{u_{0,k}(r)}{|rz|^{\alpha
_{k}+\beta _{k}}}e^{\frac{r^{2}+|z|^{2}}{4}}\left( \int_{0}^{\infty
}\!\!\!\int_{0}^{\infty }\!\!e^{-s^{2}-{s^{\prime }}^{2}}\!(ss^{\prime})^{
2m+\beta _{k}+1}\times \right.  \\
& \ \ \ \left. \times J_{\beta _{k}}(\sqrt{2}rs)J_{\beta _{k}}(\sqrt{2}%
|z|s^{\prime })\,ds\,ds^{\prime }\right) \!r^{N-1}dr\!\bigg)\Bigg] \\
& =4\sum\limits_{k=1}^{\infty }\psi _{k}\big(\tfrac{z}{|z|}\big)e^{-i(\beta
_{k}+1)\arctan t}\!\Bigg[\int_{0}^{\infty }u_{0,k}(r)|rz|^{-\alpha
_{k}-\beta _{k}}e^{\frac{r^{2}+|z|^{2}}{4}}r^{N-1}\times  \\
& \ \ \ \times e^{i(\arctan t+\frac{\pi }{2})\beta _{k}}\bigg(%
\int_{0}^{\infty }\!\!\!\int_{0}^{\infty }\!\!\!\frac{ss^{\prime }}{%
e^{s^{2}+s^{\prime 2}}}\times  \\
& \ \ \ \times \bigg(\sum\limits_{m=0}^{\infty }\frac{(-1)^{m}e^{-i(\arctan
t+\frac{\pi }{2})(2m+\beta _{k})}}{\Gamma (1+m)\Gamma (1+\beta _{k}+m)}%
(ss^{\prime})^{ 2m+\beta _{k}}\bigg)\times  \\
& \ \ \ \times J_{\beta _{k}}(\sqrt{2}rs)J_{\beta _{k}}(\sqrt{2}|z|s^{\prime
})ds\,ds^{\prime }\!\bigg)dr\Bigg].
\end{align*}%
Then, since $\sum\limits_{m=0}^{\infty }(-1)^{m}\frac{e^{-i(\arctan
    t+\frac{%
      \pi }{2})(2m+\beta _{k})}}{\Gamma (1+m)\Gamma (1+\beta
  _{k}+m)}(ss^{\prime})^{2m+\beta _{k}}=J_{\beta _{k}}(2e^{-i(\arctan
  t+\frac{\pi }{2})}ss^{\prime })$%
, we have
\begin{multline}
\varphi (z,t)  \label{eq:2} \\
=4\sum\limits_{k=1}^{\infty }\psi _{k}\big(\tfrac{z}{|z|}\big)e^{i(\beta _{k}%
\frac{\pi }{2}-\arctan t)}\!\Bigg[\int_{0}^{\infty }u_{0,k}(r)|rz|^{-\frac{%
N-2}{2}}e^{\frac{r^{2}+|z|^{2}}{4}}{\mathcal{I}}_{k,t}(r,|z|)r^{N-1}dr\Bigg],
\end{multline}%
where
\begin{equation*}
{\mathcal{I}}_{k,t}(r,|z|)=\int_{0}^{\infty }\!\!\!\int_{0}^{\infty
}\!\!ss^{\prime} e^{-s^{2}-s^{\prime 2}}J_{\beta _{k}}(2e^{-i(\arctan t+\frac{%
\pi }{2})}ss^{\prime })J_{\beta _{k}}(\sqrt{2}rs)J_{\beta _{k}}(\sqrt{2}%
|z|s^{\prime })\,ds\,ds^{\prime }.
\end{equation*}%
From \cite[formula (1), p. 395]{watson} (with $t=s^{\prime }$, $p=1$, $a=%
\sqrt{2}|z|$, $b=2e^{-i(\arctan t+\frac{\pi }{2})}s$, $\nu =\beta _{k}$
which satisfy $\Re (\nu )>-1$ and $|\mathop{\rm arg}p|<\frac{\pi }{4}$), we
know that
\begin{align*}
& \int_{0}^{\infty }\!\!s^{\prime} e^{-s^{\prime 2}}J_{\beta
_{k}}(2e^{-i(\arctan t+\frac{\pi }{2})}ss^{\prime })J_{\beta _{k}}(\sqrt{2}%
|z|s^{\prime })\,ds^{\prime } \\
& =\frac{1}{2}e^{-\frac{|z|^{2}+2e^{-i(2\arctan t+\pi )}s^{2}}{2}}I_{\beta
_{k}}\bigg(\frac{\sqrt{2}|z|s}{e^{i(\arctan t+\frac{\pi }{2})}}\bigg),
\end{align*}%
where $I_{\beta _{k}}$ denotes the modified Bessel function of order $\beta
_{k}$. Hence
\begin{align*}
& {\mathcal{I}}_{k,t}(r,|z|) \\
& \ \ =\dfrac{1}{2}\displaystyle\int_{0}^{\infty }se^{-s^{2}}J_{\beta _{k}}(%
\sqrt{2}rs)e^{-\frac{|z|^{2}+2e^{-i(2\arctan t+\pi )}s^{2}}{2}}I_{\beta
_{k}}(\sqrt{2}e^{-i(\arctan t+\frac{\pi }{2})}|z|s)\,ds \\
& \ \ =\dfrac{1}{4}\displaystyle\int_{0}^{\infty }se^{-\frac{s^{2}}{2}%
}J_{\beta _{k}}(rs)e^{-\frac{|z|^{2}+e^{-i(2\arctan t+\pi )}s^{2}}{2}%
}I_{\beta _{k}}(e^{-i(\arctan t+\frac{\pi }{2})}|z|s)\,ds.
\end{align*}%
Since $I_{\nu }(x)=e^{-\frac{1}{2}\nu \pi i}J_{\nu }(xe^{\frac{\pi }{2}i})$
(see e.g. \cite[9.6.3, p. 375]{AS}), we obtain
\begin{equation*}
{\mathcal{I}}_{k,t}(r,|z|)=\frac{1}{4}e^{-\frac{\beta _{k}}{2}\pi i}e^{-
\frac{|z|^{2}}{2}}\!\!\!\int_{0}^{\infty }\!\!\!se^{-\frac{s^{2}}{2}
(e^{-i(2\arctan t+\pi )}+1)}J_{\beta _{k}}(rs)J_{\beta _{k}}(e^{-i\arctan
t}|z|s)\,ds.
\end{equation*}%
Applying \cite[formula (1), p. 395]{watson} (with $t=s$, $p^{2}=\frac{{%
1+e^{-i(2\arctan t+\pi )}}}{2}$, $a=r$, $b=e^{-i\arctan t}|z|$, $\nu =\beta
_{k}$ which satisfy $\Re (\nu )>-1$ and $|\mathop{\rm arg}p|<\frac{\pi }{4}$%
) and \cite[9.6.3, p. 375]{AS}, we obtain
\begin{align}
& {\mathcal{I}}_{k,t}(r,|z|)  \label{eq:calI} \\
=\dfrac{1}{4}e^{-\frac{\beta _{k}}{2}\pi i}& e^{-\frac{|z|^{2}}{2}}\dfrac{1}{%
1+e^{-i(2\arctan t+\pi )}}e^{-\frac{r^{2}+|z|^{2}e^{-2i\arctan t}}{%
2(1+e^{-i(2\arctan t+\pi )})}}I_{\beta _{k}}\bigg(\dfrac{r|z|e^{-i\arctan t}%
}{1+e^{-i(2\arctan t+\pi )}}\bigg)  \notag \\
=\frac{1}{4}e^{-\beta _{k}\pi i}& e^{-\frac{|z|^{2}}{2}}\dfrac{1}{%
1+e^{-i(2\arctan t+\pi )}}e^{-\frac{r^{2}+|z|^{2}e^{-2i\arctan t}}{%
2(1+e^{-i(2\arctan t+\pi )})}}J_{\beta _{k}}\bigg(i\dfrac{r|z|e^{-i\arctan t}%
}{1+e^{-i(2\arctan t+\pi )}}\bigg).  \notag
\end{align}%
Noticing that
\begin{equation*}
e^{-i\arctan t}=-\frac{i(t+i)}{\sqrt{1+t^{2}}},\quad \dfrac{1}{%
1+e^{-i(2\arctan t+\pi )}}=\frac{t-i}{2t},
\end{equation*}%
from \eqref{eq:2} and \eqref{eq:calI} we deduce
\begin{multline}
\varphi (z,t)  \label{eq:3} \\
=\frac{e^{-i\arctan t}}{1+e^{-i(2\arctan t+\pi )}}\sum\limits_{k=1}^{\infty
}\psi _{k}\big(\tfrac{z}{|z|}\big)e^{-i\beta _{k}\frac{\pi }{2}}\!\Bigg[%
\int_{0}^{\infty }\frac{u_{0,k}(r)}{|rz|^{\frac{N-2}{2}}}e^{\frac{r^{2}}{2}%
\big(\frac{1}{2}-\frac{1}{1+e^{-i(2\arctan t+\pi )}}\big)}\times  \\
\times e^{-\frac{|z|^{2}}{4}\big(1+\frac{2e^{-2i\arctan t}}{1+e^{-i(2\arctan
t+\pi )}}\big)}J_{\beta _{k}}\bigg(i\dfrac{r|z|e^{-i\arctan t}}{%
1+e^{-i(2\arctan t+\pi )}}\bigg)r^{N-1}\,dr\Bigg] \\
\ =\frac{\sqrt{1+t^{2}}}{2ti}\sum\limits_{k=1}^{\infty }\psi _{k}\big(\tfrac{%
z}{|z|}\big)e^{-i\beta _{k}\frac{\pi }{2}}\!\Bigg[\int_{0}^{\infty }\frac{%
u_{0,k}(r)}{|rz|^{\frac{N-2}{2}}}e^{-\frac{r^{2}}{4it}}e^{-\frac{|z|^{2}}{4it%
}}J_{\beta _{k}}\bigg(\dfrac{r|z|\sqrt{1+t^{2}}}{2t}\bigg)r^{N-1}\,dr\Bigg].
\end{multline}%
From (\ref{eq:3}) and \eqref{varphi} we get that for $t>0$,
\begin{align}
& u(x,t)=(1+t^{2})^{-\frac{N}{4}}e^{\frac{it|x|^{2}}{4(1+t^{2})}}\varphi %
\bigg(\frac{x}{\sqrt{1+t^{2}}},t\bigg)  \label{serie} \\
=e^{-\frac{|x|^{2}}{4ti}}& \dfrac{1}{2ti}|x|^{-\frac{N-2}{2}%
}\sum\limits_{k=1}^{\infty }\psi _{k}\big(\tfrac{x}{|x|}\big)e^{-i\beta _{k}%
\frac{\pi }{2}}\!\bigg(\int_{0}^{\infty }u_{0,k}(r)e^{-\frac{r^{2}}{4ti}%
}J_{\beta _{k}}\bigg(\dfrac{r|x|}{2t}\bigg)r^{\frac{N}{2}}\,dr\bigg).  \notag
\end{align}%
Notice that, by replacing $\int_{0}^{\infty }$ by $\int_{0}^{R}$ in %
\eqref{serie} one obtains the series representation of the solution $%
u_{R}(x,t)$ with initial data $u_{0,R}(x)\equiv \chi _{R}(x)u_{0}(x)$ with $%
\chi _{R}(x)$ the characteristic function of the ball of radius $R$ centered
at the origin. Since the evolution by Schr\"{o}dinger equation is an
isometry in $L^{2}$, we have that for all $t\in\R$ $\left\Vert u-u_{R}\right\Vert
_{L^{2}}(t)=\left\Vert u_{0}-u_{0,R}\right\Vert _{L^{2}}\rightarrow 0$, as $%
R\rightarrow \infty $. Hence $u(\cdot,t)=\lim_{R\rightarrow \infty }u_{R}(\cdot,t)$ in $L^{2}(\R^N)$. Since
\begin{equation*}
u_{0,k}(r)=\int_{{\mathbb{S}}^{N-1}}u_{0}(r\theta )\overline{\psi
_{k}(\theta )}\,dS(\theta ),
\end{equation*}%
and, by hypothesis, the queue of the series
\begin{equation*}
K(x,y)=\sum\limits_{k=1}^{\infty }e^{-i\beta _{k}\frac{\pi }{2}}\psi _{k}%
\big(\tfrac{x}{|x|}\big)\overline{\psi _{k}(\theta )}J_{\beta _{k}}\bigg(%
\dfrac{r|x|}{2t}\bigg)
\end{equation*}%
is uniformly convergent on compacts, we can exchange integral and sum and
write
\begin{align*}
  u_{R}(x,t)&=
  \frac{e^{-\frac{|x|^{2}}{4ti}}}{2ti}|x|^{-\frac{N-2}{2}%
  }\!\int_{0}^{R}\!\!\int_{{\mathbb{S}}^{N-1}}u_{0}(r\theta
  )r^{\frac{N}{2}
  }e^{-\frac{r^{2}}{4ti}}\times  \\
  & \quad\times\bigg[\sum\limits_{k=1}^{\infty }e^{-i\beta
    _{k}\frac{\pi }{2}}\psi _{k}%
  \big(\tfrac{x}{|x|}\big)\overline{\psi _{k}(\theta )}J_{\beta
    _{k}}\bigg(
  \dfrac{r|x|}{2t}\bigg)\bigg]dr\,dS(\theta )\\
  &= \frac{e^{\frac{i|x|^{2}}{4t}}}{i(2t)^{{N}/{2}}}
  \int_{B_R}K\bigg(\frac{x}{\sqrt{2t}},\frac{y}{\sqrt{2t}}\bigg)e^{i\frac{|y|^{2}}{%
      4t}}u_{0}(y)\,dy .
\end{align*}
Letting $R\rightarrow \infty $, we obtain \eqref{representation} thus
completing the proof of Theorem \ref{Main}.
\end{pfn}

\section{Proof of Theorem
  \protect\ref{thm:AB}}\label{sec:ahar-bohm-magn}

\noindent 
In view of Remark \ref{rem:t_neg}, it is enough to prove the stated
estimate for $t>0$. Moreover, thanks to Corollary \ref{cor:decay}, it is sufficient to prove condition
\eqref{eq:claim}, namely,  uniform boundedness of
\begin{equation*}
K(x,y)=\frac{1}{(2\pi)^2}\sum\limits_{j\in {\mathbb{Z}}}
e^{-i|\alpha -j|\frac{\pi }{2}}e^{-ij\arctan \frac{x_{2}}{x_{1}}
}e^{ij\arctan \frac{y_{2}}{y_{1}}}J_{|\alpha -j|}(|x||y|)
\end{equation*}
which can be written as $K(x,y)=\frac{1}{(2\pi)^2}W\big(\arctan
\frac{x_{2}}{x_{1}}-\arctan \frac{y_{2}}{y_{1}},|x||y|\big)$
where
\begin{equation*}
W(z,s)=\sum\limits_{j\in {\mathbb{Z}}}e^{-i|\alpha -j|\frac{\pi }{2}%
}e^{-ijs}J_{|\alpha -j|}\left( z\right).
\end{equation*}
Notice that
\begin{equation*}
\left\vert \alpha -j\right\vert =
\begin{cases}
\alpha -j,&\text{if }j<\alpha, \\
j-\alpha ,& \text{if }j>\left[ \alpha \right] \equiv j_{0},
\end{cases}
\end{equation*}
so that we can write%
\begin{align*}
W(z,s)&=  i^{-\alpha }\sum\limits_{j<-\left\vert j_{0}\right\vert+1
}i^{j}e^{-ijs}J_{\alpha -j}\left( z\right) \\
&\quad +\sum\limits_{-\left\vert j_{0}\right\vert+1 }^{\left\vert j_{0}\right\vert+1
}e^{-i|\alpha -j|\frac{\pi }{2}}e^{-ijs}J_{|\alpha -j|}\left( z\right)
+i^{\alpha }\sum\limits_{j>\left\vert j_{0}\right\vert+1
}i^{-j}e^{-ijs}J_{j-\alpha }\left( z\right)\\
&\equiv i^{-\alpha }S_{1}(z,s)+S_{2}(z,s)+i^{\alpha }S_{3}(z,s).
\end{align*}
$S_{2}(z,s)$ is clearly bounded.
By using identity $9.1.27$ in \cite{AS},
\begin{equation*}
J_{\nu }^{\prime }(z)=\frac{1}{2}(J_{\nu -1}(z)-J_{\nu +1}(z)),
\end{equation*}%
we can compute%
\begin{align}  \label{iden1}
\frac{d}{dz}&S_{1}(z,s)  =\frac{1}{2}\sum\limits_{j>\left\vert j_{0}\right\vert+1
}i^{-j}e^{ijs}(J_{\alpha +j-1}(z)-J_{\alpha +j+1}(z)) \\
&=\frac{1}{2}\sum\limits_{j>\left\vert j_{0}\right\vert}i^{-(j+1)}e^{i(j+1)s}J_{\alpha +j}(z)-\frac{1}{2}\sum\limits_{j>\left%
\vert j_{0}\right\vert +2}i^{-(j-1)}e^{i(j-1)s}J_{\alpha +j}(z)  \notag \\
& =\frac{1}{2}\left( i^{-1}e^{is}J_{\alpha +\left\vert j_{0}\right\vert+1
}(z)+J_{\alpha +\left\vert j_{0}\right\vert +2}(z)\right) i^{-\left\vert
j_{0}\right\vert }e^{i\left\vert j_{0}\right\vert s}-\left( i\cos s\right)
S_{1}(z,s),  \notag
\end{align}%
\begin{align}  \label{iden2}
&\frac{d}{dz} S_{3}(z,s)  =\frac{1}{2}\sum\limits_{j>\left\vert j_{0}\right\vert+1
}i^{-j}e^{-ijs}(J_{-\alpha +j-1}(z)-J_{-\alpha +j+1}(z)) \\
& =\frac{1}{2}\sum\limits_{j>\left\vert j_{0}\right\vert
}i^{-(j+1)}e^{-i(j+1)s}J_{-\alpha +j}(z)-\frac{1}{2}\sum\limits_{j>\left%
\vert j_{0}\right\vert +2}i^{-(j-1)}e^{-i(j-1)s}J_{-\alpha +j}(z)  \notag \\
& =\frac{1}{2}\left( i^{-1}e^{-is}J_{-\alpha +\left\vert j_{0}\right\vert+1
}(z)+J_{-\alpha +\left\vert j_{0}\right\vert +2}(z)\right) i^{-\left\vert
j_{0}\right\vert }e^{-i\left\vert j_{0}\right\vert s}-\left( i\cos s\right)
S_{3}(z,s).  \notag
\end{align}%
Defining%
\begin{equation*}
F(z,s)=i^{-\alpha }S_{1}(z,s)+i^{\alpha }S_{3}(z,s),
\end{equation*}%
we deduce from (\ref{iden1}) and (\ref{iden2}) that, for every $s$,
$F(\cdot,s)$ satisfies the differential equation
\begin{equation}
\frac{d}{dz} F(z,s)+\left( i\cos s\right) F(z,s)=g(z,s) ,  \label{de1}
\end{equation}%
with%
\begin{align*}
g(z,s)=&\frac{i^{-\alpha }}{2}\left( i^{-1}e^{is}J_{\alpha +\left\vert
j_{0}\right\vert+1 }(z)+J_{\alpha +\left\vert j_{0}\right\vert +2}(z)\right)
i^{-\left\vert j_{0}\right\vert -1}e^{i(\left\vert j_{0}\right\vert+1) s} \\
\\
&+\frac{i^{\alpha }}{2}\left( i^{-1}e^{-is}J_{-\alpha +\left\vert
j_{0}\right\vert+1 }(z)+J_{-\alpha +\left\vert j_{0}\right\vert +2}(z)\right)
i^{-\left\vert j_{0}\right\vert -1}e^{-i(\left\vert j_{0}\right\vert+1) s}.%
\end{align*}
By integration of (\ref{de1}) we obtain that
\begin{equation*}
F(z,s)=e^{-iz\cos s}\bigg(F(0,s)+\int_0^{z}e^{iz^{\prime }\cos
s}g(z^{\prime },s)dz^{\prime }\bigg).
\end{equation*}%
Since $\left\vert j_{0}\right\vert+1 \pm \alpha >0$,
by the asymptotic behavior of Bessel functions close to the
origin (see formula $9.1.7$ in \cite{AS})
\begin{equation*}
J_{\nu }(x)\simeq \frac{1}{\Gamma (\nu +1)}\left( \frac{x}{2}\right) ^{\nu }
\end{equation*}%
we
conclude that
$F(0,s)=0$,
and hence
\begin{equation*}
F(z,s)=\int_{0}^{z}e^{-i(z-z^{\prime })\cos s}g(z^{\prime },s)dz^{\prime }.
\end{equation*}%
Uniform (in $s$ and $z$) boundedness of $F(z,s)$ follows from uniform
boundedness of the function%
\begin{equation*}
f(z,s)\equiv \int_{0}^{z}e^{iz^{\prime }\cos s}\left( i^{-1}e^{is}J_{\sigma
}(z^{\prime })+J_{\sigma +1}(z^{\prime })\right) dz^{\prime }
\end{equation*}%
for any $\sigma >0$. In order to prove it, we use the identity%
\begin{equation}\label{eq:asintinfti}
J_{\sigma }(z)=\sqrt{\frac{2}{\pi z}}\cos \left( z-\frac{\sigma \pi }{2}-%
\frac{\pi }{4}\right) +\xi_{\sigma} (z)
\end{equation}
with%
\begin{equation}
\left\vert \xi_{\sigma} (z)\right\vert \leq \frac{C_{\sigma}}{z^{\frac{1}{2}}(1+z)},
\label{ineqs}
\end{equation}%
which is a simple consequence of the asymptotic behavior of Bessel
functions at infinity (see formula $9.2.1$ in \cite{AS}).
Therefore,
\begin{align*}
f(z,s) =&\int_{0}^{z}\sqrt{\frac{2}{\pi z^{\prime }}}e^{iz^{\prime }\cos
s}\left( e^{-i\frac{\pi }{2}}e^{is}\cos \left( z^{\prime }-\frac{\sigma \pi
}{2}-\frac{\pi }{4}\right) +\sin \left( z^{\prime }-\frac{\sigma \pi }{2}-%
\frac{\pi }{4}\right) \right) dz^{\prime } \\
&+\int_{0}^{z}e^{iz^{\prime }\cos s}(i^{-1}e^{is}\xi_\sigma (z^{\prime })+\xi_{\sigma+1} (z^{\prime }))dz^{\prime } \equiv
I_{1}(z)+I_{2}(z).
\end{align*}
By (\ref{ineqs}), $I_{2}(z)$ is uniformly bounded.
We notice now that
\begin{gather*}
\sqrt{\frac{2}{\pi z'}}e^{iz^{\prime }\cos s}\left( e^{-i\frac{\pi }{2}%
}e^{is}\cos \left( z'-\frac{\sigma \pi }{2}-\frac{\pi }{4}\right) +\sin
\left( z'-\frac{\sigma \pi }{2}-\frac{\pi }{4}\right) \right)\\
=\frac{1}{2i}\sqrt{\frac{2}{\pi z'}}e^{iz^{\prime }\cos s}\left( \left(
e^{is}+1\right) e^{i\left( z'-\frac{\sigma \pi }{2}-\frac{\pi }{4}\right)
}+\left( e^{is}-1\right) e^{-i\left( z'-\frac{\sigma \pi }{2}-\frac{\pi }{4}%
\right) }\right)
\end{gather*}
and since%
\begin{eqnarray*}
\int_{0}^{z}\frac{e^{iz^{\prime }(\cos s+1)}}{\sqrt{z^{\prime }}}dz^{\prime
} &=&\frac{1}{\sqrt{\cos s+1}}\int_{0}^{z(\cos s+1)}\frac{e^{iy}}{\sqrt{y}}dy
\\
\int_{0}^{z}\frac{e^{iz^{\prime }(\cos s-1)}}{\sqrt{z^{\prime }}}dz^{\prime
} &=&\frac{1}{\sqrt{1-\cos s}}\int_{0}^{z(1-\cos s)}\frac{e^{-iy}}{\sqrt{y}}%
dy
\end{eqnarray*}%
and%
\begin{equation*}
\left\vert \frac{\left( e^{is}+1\right) }{\sqrt{\cos s+1}}\right\vert \leq
C,\quad \left\vert \frac{\left( e^{is}-1\right) }{\sqrt{1-\cos s}}\right\vert
\leq C,
\end{equation*}%
we conclude that $I_{1}(z)$ is uniformly bounded.

Therefore, $K(x,y)$ is uniformly
bounded and then inequality \eqref{eq:decayAB} follows by  Corollary \ref{cor:decay}. \qed

\section{Proof of Theorem \protect\ref{thm:inversesquare}}\label{sec:inverse}

In view of Remark \ref{rem:t_neg}, it is sufficient to consider the
case $t>0$. Let $N=3$, $a>-\frac14$, and $K$ as in
\eqref{eq:Sinverse}. The proof of the theorem will follow from the
following estimates for $K$:
\begin{align}
\label{eq:estimate_i_K}
&\text{if }\alpha_1<0,\quad\text{then}\quad\sup_{x,y\in\R^3}|K(x,y)|<+\infty,\\
\label{eq:estimate_ii_K}
&\text{if
}\alpha_1>0,\quad\text{then}\quad\sup_{x,y\in\R^3}\frac{|K(x,y)|}{1+(|x||y|)^{-\alpha_1}}<+\infty.
\end{align}
Before proving the above estimates, let us show how
\eqref{eq:estimate_i_K} and \eqref{eq:estimate_ii_K} imply estimates
\eqref{eq:decayinverse} and \eqref{eq:decayinverse2} respectively, thus
proving Theorem \ref{thm:inversesquare}.

We notice that if
$\alpha_1=0$ then $a=0$ and there is nothing to prove since in this
case the result reduces to classical decay estimates for the free
Schr\"{o}dinger equation.

If $\alpha_1<0$, in view of
\eqref{eq:estimate_i_K}
 estimate \eqref{eq:decayinverse} directly follows from Corollary \ref{cor:decay}.

If $\alpha _{1}>0$, then \eqref{eq:estimate_ii_K} and
\eqref{representation} imply that, for some  $C>0$ (independent of $x$ and $t$),
\begin{align*}
\left\vert u(x,t)\right\vert &\leq
\frac{C}{t^{\frac32}}\int_{\R^3}\bigg(1+\frac{|x|^{-\alpha_1}|y|^{-\alpha_1}}{t^{-\alpha_1}} \bigg)|u_0(y)|\,dy\\
&=\frac{C}{t^{\frac{3}{2}}}\left\Vert
u_{0}\right\Vert _{L^{1}(\R^3)}+\frac{C}{t^{\frac{3}{2}-\alpha _{1}}}\frac{1}{
\left\vert x\right\vert ^{\alpha _{1}}}\int_{\mathbb{R}^3} \frac{\left\vert
u_{0}(y)\right\vert }{\left\vert y\right\vert ^{\alpha _{1}}}dy,\
\end{align*}
for a.e. $x\in\R^3$ and all $t\geq0$,
 which implies%
\begin{equation}\label{eq:est-w}
\frac{\left\vert x\right\vert ^{\alpha _{1}}}{1+\left\vert x\right\vert
^{\alpha _{1}}}\left\vert u(x,t)\right\vert \leq C\frac{1+t^{\alpha _{1}}}{
t^{\frac{3}{2}}}\int_{\mathbb{R}} \frac{1+\left\vert y\right\vert ^{\alpha
_{1}}}{\left\vert y\right\vert ^{\alpha _{1}}} \left\vert
u_{0}(y)\right\vert dy.
\end{equation}
Let us introduce the weight function $w(y)=\big( \frac{1+| y|^{\alpha _{1}}}{
  |y|^{\alpha _{1}}}\big)^{2}$ and the weighted $L^{p}$ norm
\begin{equation*}
\left\Vert v\right\Vert _{L_{w}^{p}}\equiv
\begin{cases}
 \left( \int_{\R^3} |v(y)|^{p}w(y)dy\right)^{{1}/{p}
},&\text{if }1\leq p<+\infty,\\
\mathop{\rm ess\,sup}_{y\in\R^3}|v(y)|,&\text{if }p=+\infty.
\end{cases}
\end{equation*}
$L^2$conservation and \eqref{eq:est-w} yield the estimates
\begin{equation*}
  \left\Vert \frac{u(\cdot,t)}{\sqrt{w}}\right\Vert _{L_{w}^{2}}
  =\left\Vert \frac{u_{0}}{\sqrt w}\right\Vert
  _{L_{w}^{2}} ,\quad
  \left\Vert \frac{u(\cdot,t)}{\sqrt w}\right\Vert _{L_{w}^{\infty }}
  \leq C\,
  \frac{1+t^{\alpha _{1}}}{t^{\frac{3}{2}}}\left\Vert
    \frac{u_{0}}{\sqrt w}\right\Vert _{L_{w}^{1}}.
\end{equation*}
Then, letting, for all $p>2$,
$$
\theta_p=1-\frac 2p,\quad p'=\frac{p}{p-1},
$$
so that
$$
\theta_p\in(0,1),\quad \frac1{p'}=\frac{1-\theta_p}2+\frac\theta1,\quad
\frac1{p}=\frac{1-\theta_p}2+\frac\theta\infty,
$$
the Riesz-Thorin interpolation theorem yields
\begin{equation*}
  \bigg\|\frac{u(\cdot,t)}{\sqrt w}\bigg\|_{L_w^p}\leq
  C^{\theta_p}\bigg(
\frac{1+t^{\alpha _{1}}}{t^{\frac{3}{2}}}\bigg)^{\theta_p}  \bigg\|\frac{u_0}{\sqrt w}\bigg\|_{L_w^{p'}}
\end{equation*}
i.e.
\begin{multline*}
  \bigg(\int_{\R^3}|u(y,t)|^p \big( \tfrac{1+| y|^{\alpha _{1}}}{
    |y|^{\alpha _{1}}}\big)^{2-p}dy\bigg)^{\!\!1/p}\\
  \leq C^{1-\frac 2p}\bigg( \frac{1+t^{\alpha
      _{1}}}{t^{\frac{3}{2}}}\bigg)^{1-\frac 2p}
  \bigg(\int_{\R^3}|u_0(y)|^{p'} \big( \tfrac{1+| y|^{\alpha _{1}}}{
    |y|^{\alpha _{1}}}\big)^{2-p'}dy\bigg)^{\!\!1/p'}.
\end{multline*}
Hence, inequality \eqref{eq:decayinverse2} in Theorem \ref%
{thm:inversesquare} follows.
Therefore, in order to prove the theorem, it is sufficient to prove
estimates \eqref{eq:estimate_i_K} and \eqref{eq:estimate_ii_K}.

It is well known that the link between plane waves and a
combination of zonal functions is given by the Jacobi-Anger expansion,
combined with the addition theorem for spherical harmonics (see for example
\cite{watson}, \cite{MU} and the references therein). For $N=3$, we get that
\begin{equation}  \label{anger}
e^{-i x\cdot y}=4\pi \sqrt{\dfrac{\pi}{2}} \sum_{\ell=0}^\infty i^{-\ell}
j_{\ell}(|x||y|)Z_{x/|x|}^{(\ell)}(y/|y|).
\end{equation}
We need to estimate the kernel $K$ in \eqref{nucleo} which, as
observed in \eqref{eq:Sinverse}, can be written as
$$
K(x,y)=S\Big(|x||y|,\tfrac{x}{|x|},\tfrac{y}{|y|}\Big)
$$
where
\begin{equation*}
S(r,\theta,\theta')
=\sum_{\ell=0}^{\infty
  }i^{-b_\ell}\,j_{-a_\ell} (r)Z_{\theta}^{(\ell)}(\theta'),
\end{equation*}
with $b_\ell=\sqrt{(\ell+1/2)^2+a}$,
$a_\ell=\frac12-\sqrt{(\ell+1/2)^2+a}$. We split the sum into two terms
\begin{align}\label{suma}
  S(r,\theta ,\theta^{\prime })&=\sum_{\ell=0}^{\ell_{0}-1}i^{-b_{\ell}}
  j_{-a_{\ell}}(r)Z_{\theta }^{(\ell)}\left( \theta^{\prime
    }\right)+\sum_{\ell=\ell_{0}}^{\infty }i^{-b_{\ell}}j_{-a_{\ell}}(r)Z_{\theta
  }^{(\ell)}\left( \theta^{\prime }\right)\\
\notag&=S_{1}(r,\theta ,\theta^{\prime })+S_{2}(r,\theta ,\theta^{\prime }),
\end{align}
with $\ell_{0}\geq0$ such that $a_{\ell}>0$ for any $\ell<\ell_{0}$
and $a_{\ell}<0$ for any $\ell\geq \ell_{0}$
($S_1$ is meant to be zero if $\ell_0=0$).
Our goal is to show that the singularities in the Jacobi-Anger expansion are
described by the first finite sum $S_{1}$ at the right-hand side of (\ref%
{suma}) while the second term $S_{2}$ at the right-hand side is uniformly
bounded. Such boundedness for $S_{2}$ follows from the arguments below. We
have that
\begin{align}\label{suma2}
  S_{2}&=\sum\limits_{\ell=\ell_{0}}^{\infty
  }i^{-b_{l}}j_{-a_{l}}(r)Z_{\theta }^{(\ell)}
  \left( \theta^{\prime }\right) \\
  \notag&=\sum\limits_{\ell=\ell_{0}}^{\infty }i^{-(\ell+\frac{1}{2})}
  j_{\ell}(r) Z_{\theta }^{(\ell)}\left( \theta^{\prime
    }\right)+\sum\limits_{\ell=\ell_{0}}^{\infty }
  (i^{-b_{\ell}}j_{-a_{\ell}}(r)-i^{-(\ell+\frac{1}{2})}j_{\ell}(r))Z_{\theta
  }^{(\ell)}\left( \theta^{\prime }\right)\\
  \notag&=i^{-\frac{1}{2}}\left[(2\pi)^{-\frac32}
    e^{-ir\theta\cdot\theta'}-
    \sum_{\ell=0}^{\ell_{0}-1}i^{-\ell}j_{\ell}(r)Z_{\theta
    }^{(\ell)}\left( \theta^{\prime
      }\right)\right] \\
  &\notag\qquad+\sum_{\ell=\ell_{0}}^{\infty }
  \Big(i^{-b_{l}}j_{-a_{\ell}}(r)-i^{-(\ell+\frac{1}{2})}j_{\ell}(r)\Big)
  Z_{\theta }^{(\ell)}\left( \theta^{\prime }\right).
\end{align}
The first term at the right hand side of (\ref{suma2}) is clearly bounded,
since it is the difference between a plane wave and the first $(\ell_{0}-1)$
terms of its Jacobi-Anger expansion.

We first notice that the second term at the right hand side of
(\ref{suma2}) is bounded for $r\leq\delta$  if $\delta>0$ is sufficiently small.
Indeed from the estimates
\begin{align}
 \label{eq:stimaJ} &|J_\nu(t)|\leq \frac{1}{\Gamma(1+\nu)}\Big(\frac t2\Big)^\nu
  e^{t^2/4},\quad \text{for all }\nu>0,\ t\geq0,\\
 \notag &|Z_{\theta }^{(\ell)}\left( \theta^{\prime
}\right)|\leq Z_{\theta }^{(\ell)}(\theta)= \frac{2\ell+1}{4\pi
},\quad \text{for all }\ell\geq0,\ \theta,\theta'\in{\mathbb S}^2,
\end{align}
see for example \cite{MU}, it follows that, if $r\leq\delta$,
\begin{align*}
&\bigg|\sum_{\ell=\ell_{0}}^{\infty
}i^{-b_\ell}j_{-a_{\ell}}(r)Z_{\theta }^{(\ell)}(\eta )\bigg|\leq
\sum_{\ell=\ell_{0}}^{\infty }\dfrac{2\ell+1}{4\pi \Gamma ({b_{\ell}+1})}\dfrac{(
\frac{r}{2})^{b_{\ell}}}{r^{\frac{1}{2}}}e^{r^2/4}\\
&\leq
\frac{e^{\delta^2/4}}{\sqrt2\, 4\pi}
\sum_{\ell=\ell_{0}}^{\infty }\dfrac{2\ell+1}{\Gamma
  ({b_{\ell}+1})}\Big(\frac r2\Big)^{-a_\ell}
\leq
\frac{e^{\delta^2/4}}{\sqrt2\, 4\pi}\Big(\frac r2\Big)^{-a_{\ell_0}}
\sum_{\ell=\ell_{0}}^{\infty }\dfrac{2\ell+1}{\Gamma
  (b_{\ell}+1)}\leq C r^{-a_{\ell_0}}
\end{align*}
for some constant $C>0$ dependent on $\delta$ and $\ell_0$ but
independent of $r,\theta,\theta'$.

Next, for $r>\delta $, we write
\begin{align}\label{suma4}
\sum\limits_{\ell=\ell_{0}}^{\infty }&(i^{-b_{\ell}}j_{-a_{\ell}}(r)-i^{-(\ell+
\frac{1}{2})}j_{\ell}(r))Z_{\theta }^{(\ell)}\left( \theta^{\prime }\right) \\
&\notag=\dfrac{1}{2\pi ir^{\frac{1}{2}}}\displaystyle\int_{\gamma }e^{\frac{r}{2}
\left( z-\frac{1}{z}\right) }\left( \sum\limits_{\ell=\ell_{0}}^{\infty } \left[
(iz)^{\ell+\frac{1}{2}-b _{\ell}}-1\right]\frac{Z_{\theta }^{(\ell)}\left(
\theta^{\prime }\right)}{(iz)^{\ell+\frac{1}{2}}} \right) \frac{dz}{z},
\end{align}
where we have used the following representation for Bessel functions%
\begin{equation*}
J_{\nu }(r)=\frac{1}{2\pi i}\int_{\gamma }e^{\frac{r}{2}\left( z-\frac{1}{z}%
\right) }\frac{dz}{z^{\nu +1}},
\end{equation*}%
with $\gamma $ being the positive oriented contour represented in Figure \ref{fig:pspic} (see
\cite[5.10.7]{lebedev}). We have also exchanged sum and integral, which is allowed for any $r, \theta, \theta'$, as we will see below.

For convenience, we split the integral along $\gamma $ into the integrals $%
I_{1}$, along the circumference of radius $1$ (to be denoted as $\Gamma _{1}$%
), and the integral $I_{2}$, along the lines running between $z=-\infty $
and $z=-1$ (to be denoted as $\Gamma _{2}$):%
\begin{equation*}
\int_{\gamma }=\int_{\Gamma _{1}}+\int_{\Gamma _{2}}\equiv I_{1}+I_{2}.
\end{equation*}%
Notice that, by analyticity of the integrand outside $z=\mathbb{R}%
^{-}+i0^{\pm }$, we can write%
\begin{equation*}
\int_{\gamma }=\int_{\Gamma _{1}^{\varepsilon }}+\int_{\Gamma
_{2}^{\varepsilon }}=\lim_{\varepsilon \rightarrow 0^{+}}\left( \int_{\Gamma
_{1}^{\varepsilon }}+\int_{\Gamma _{2}^{\varepsilon }}\right)
\end{equation*}%
where $\Gamma _{1}^{\varepsilon }$ is the circumference of radius $%
1+\varepsilon $ around the origin and $\Gamma _{2}^{\varepsilon }$ runs
along $\left( -\infty ,-1-\varepsilon \right) +i0^{\pm }$. Notice that, for the integral along $\Gamma_{1}^{\varepsilon }\cup \Gamma_{2}^{\varepsilon }$ since $|z|>1$, one has absolute convergence for any given $r, \theta, \theta'$ and hence the exchange of integral and sum performed in formula \eqref{suma4} is allowed by Fubini's Theorem.

We start
estimating the integral along $\Gamma _{1}$. Taking into account that
\begin{equation*}
b_{\ell }-\big(\ell +\tfrac{1}{2}\big)=\sqrt{\left( \ell +\tfrac{1}{2}%
\right) ^{2}+a}-\tfrac{1}{2}-\ell =\tfrac{a}{2\ell +1}+O(\ell ^{-3}),
\end{equation*}%
we have that 
\begin{align}
\left[ (iz)^{(\ell +\frac{1}{2})-b_{\ell }}-1\right] & =-\frac{a}{2\ell +1}%
\log (iz)+\frac{a^{2}}{2}\frac{(\log (iz))^{2}}{(2\ell +1)^{2}}+\frac{O(1)}{%
\ell ^{3}}  \label{suma5} \\
& \equiv J_{1,1}(z,\ell )+J_{1,2}(z,\ell )+J_{1,3}(z,\ell )  \nonumber
\end{align}%
as $\ell \rightarrow +\infty $ uniformly with respect to $z\in \Gamma _{1}$.
Since $z^{-b_{\ell }}$ and $z^{(\ell +\frac{1}{2})}$ have a branch-cut at $z\in
\mathbb{R}^{-}$, the function $\log (iz)$ will also have a branch-cut at $z\in
\mathbb{R}^{-}$, as well as the function $(iz)^{\frac{1}{2}}$ that will
appear below. From (\ref{suma5}) the contribution of the right hand side of (
\ref{suma4}) on $\Gamma _{1}$ can be written
\begin{equation*}
I_{1}=\dfrac{1}{2\pi ir^{\frac{1}{2}}}\displaystyle\int_{\Gamma _{1}}e^{%
\frac{r}{2}\left( z-\frac{1}{z}\right) }\left( \sum\limits_{\ell =\ell
_{0}}^{\infty }\Big(J_{1,1}(z,\ell )+J_{1,2}(z,\ell )+J_{1,3}(z,\ell )\Big)%
\frac{Z_{\theta }^{(\ell )}\left( \theta ^{\prime }\right) }{(iz)^{\ell +%
\frac{1}{2}}}\right) \frac{dz}{z}
\end{equation*}%
\begin{equation*}
\equiv \mathcal{J}_{1,1}+\mathcal{J}_{1,2}+\mathcal{J}_{1,3}\ ,
\end{equation*}%
where every summand $\mathcal{J}_{1,i},\,i=1,2,3,$ corresponds to the integrand with the corresponding $J_{1,i}$.
Since on $\Gamma _{1}$ we have that $|z|=1$ and then $\big|e^{\frac{r}{2}
\left( z-\frac{1}{z}\right) }\big|=1$, from the estimate $\big|Z_{\theta
}^{(\ell )}\big(\theta ^{\prime }\big)\big|\leq \frac{2\ell +1}{4\pi }$ we
deduce that, if $r>\delta $,
\begin{equation*}
\left\vert \mathcal{J}_{1,3}\right\vert \leq \mathrm{const\,}
r^{-1/2}\sum\limits_{\ell =\ell _{0}}^{\infty }\frac{2\ell +1}{\ell ^{3}}
\leq \mathrm{const\,}\delta ^{-1/2},
\end{equation*}%
and hence $\left\vert \mathcal{J}_{1,3}\right\vert $ is
bounded. Concerning
$\mathcal{J}_{1,1}$, we notice that
\begin{align}\label{serie1}
  -a\log (iz)&\sum_{\ell=0}^{\infty }\frac{Z_{\theta }^{(\ell)}\left(
      \theta ^{\prime }\right)
  }{2\ell+1}(iz)^{-\ell-\frac{1}{2}}=-\frac{a\log (iz)}{4\pi }
  \sum_{\ell=0}^{\infty }P_{\ell}(\theta \cdot \theta ^{\prime })(iz)^{-\ell-\frac{1}{2}}\\
  \notag&=-\frac{1}{4\pi }\frac{a(iz)^{-\frac{1}{2}}\log
    (iz)}{\sqrt{1+2\theta \cdot \theta ^{\prime
      }\frac{i}{z}-\frac{1}{z^{2}}}}=-\frac{1}{4\pi }\frac{a(-iz)^{
      \frac{1}{2}}\log (iz)}{\sqrt{z^{2}+2iz(\theta \cdot \theta
      ^{\prime })-1}}
\end{align}
where we have used the well-known identity (see for example \cite{MU})
\begin{equation}\label{Zonal}
  4\pi \frac{Z_{\theta }^{(\ell)}\left( \theta ^{\prime }\right) }{2\ell+1}
  =P_{\ell}(\theta \cdot \theta ^{\prime }),
\end{equation}
with $P_{\ell}$ being the Legendre polynomial of index $\ell$, and the identity (see
Formula 22.9.12 in \cite{AS})
\begin{equation}\label{Legendre}
  \sum_{\ell=0}^{\infty }P_{\ell}(t)w^{\ell}=\frac{1}{\sqrt{1-2wt+w^{2}}},
\end{equation}
which is valid for $\left\vert w\right\vert <1$. Hence, identity (\ref
{serie1}) is valid for $\left\vert z\right\vert >1$. Therefore
\begin{equation}\label{suma3}
  \mathcal{J}_{1,1}\sim (Bounded\,\,\,\,terms)-\frac{a}{8\pi ^{2}ir^{\frac{1}{2
      }}}\int_{\Gamma _{1}^{\varepsilon }}e^{\frac{r}{2}\left( z-\frac{1}{z}
    \right) }\frac{\log (iz)(-iz)^{\frac{1}{2}}}{\sqrt{z^{2}+2iz(\theta \cdot
      \theta ^{\prime })-1}}\frac{dz}{z},
\end{equation}
where the first term at the right-hand side of (\ref{suma3})
represents a finite sum of terms, that are needed to complete the
series (\ref{serie1}) from $\ell=0$ to $\ell=\ell_{0}-1$, and which
are uniformly bounded.  Since $|e^{\frac{r}{2}( z-1/{z}) }|=1$ for all
$r>0$ and $z\in\Gamma _{1}$, if $1-(\theta \cdot \theta ^{\prime })^2$
does not approach zero, i.e. if $\theta \cdot \theta ^{\prime }$ stays
far away from $\pm1$, the second term at the right hand side of
(\ref{suma3}) is uniformly bounded with respect to $\e\to0^+$,
$r>\delta$, and $1-(\theta\cdot\theta')^2>\delta$, due to the
integrability of the two square root singularities of the integrand at
$z_{\pm}=-i(\theta \cdot \theta ^{\prime })\pm \sqrt{
  1-(\theta\cdot\theta^{\prime })^{2}}$.

When $(\theta \cdot \theta ^{\prime })=\mp1$, the two square root
singularities at $z_{\pm }$ collapse into a stronger singularity at
$z=\pm i$. Let us discuss e.g. the case $(\theta \cdot \theta ^{\prime
})=-1$ (the case $(\theta \cdot \theta ^{\prime
})=1$ can be treated similarly); then
\begin{multline}\label{singint}
  \lim_{\varepsilon \rightarrow 0^{+}}\int_{\Gamma _{1}^{\varepsilon
    }}e^{ \frac{r}{2}\left( z-\frac{1}{z}\right)
  }\frac{(-iz)^{\frac{1}{2}}\log (iz)}{
    z-i}\dfrac{dz}{z}\\
  =\pi ^{2}ie^{ir}+PV\int_{\Gamma _{1}}e^{\frac{r}{2}\left( z-
      \frac{1}{z}\right) }\frac{\log
    (iz)}{z-i}(-iz)^{\frac{1}{2}}\dfrac{dz}{z}.
\end{multline}
Equation (\ref{singint}) is simply the Plemelj-Sokhotskyi formula (see
for instance \cite{AF}) for the limit of Cauchy integrals when
approaching a singular point. The first term at the right hand side of
(\ref{singint}) is clearly bounded. The second term at the right hand
side of (\ref{singint}) is a singular integral of the function
$e^{\frac{r}{2}( z-\frac{1}{z})}\frac{\log (iz)(-iz)^{1/2}}{z}$ which
is differentiable for $z=e^{i\theta }$ with $\theta $ in the
neighborhood of $\frac{\pi }{2}$ (remind that the discontinuity of
the argument of $z$ is along the negative real line).

Hence, since the principal value of a Cauchy integral of a
differentiable function is bounded (cf. \cite{AF}), we conclude the
boundedness of $\mathcal{J}_{1,1}$ for any $r>\delta $.
The fact that the principal value integral is bounded for any $r$ does not
exclude the possibility of its diverging as $r\rightarrow \infty $. In order
to exclude this possibility, we consider a neighborhood in $\Gamma _{1}$ of $%
z=i$:
\begin{equation*}
\Gamma _{1}^{s_{0}}=\left\{ z=e^{i\left( \frac{\pi }{2}+s\right) },\
\left\vert s\right\vert <s_{0}\ll 1\right\}
\end{equation*}
and we integrate there for any $r\gg 1$ having into account that
$$
\frac{(\pi +s)e^{i
\frac{s}{2}}}{i(e^{is}-1)}=-\frac{\pi}{s}+ O(1)\quad\text{for } s\thicksim 0.
$$
Hence,
\begin{align*}
  PV\displaystyle\int\limits_{-s_{0}}^{s_{0}}e^{ir\cos s}\frac{(\pi
    +s)e^{i \frac{s}{2}}}{i(e^{is}-1)}ds&= PV\displaystyle\int
  \limits_{-s_{0}}^{s_{0}}e^{ir\cos s} \big(-\frac{\pi}{s}+
  O(1)\big)\,ds\\
  &=-PV\displaystyle\int\limits_{-s_{0}}^{s_{0}}\pi\frac{ e^{ir\cos
      s}}{s}\, ds +PV\displaystyle\int\limits_{-s_{0}}^{s_{0}} O(1)
  e^{ir\cos s}\,ds.
\end{align*}
Since $PV\int\limits_{-s_{0}}^{s_{0}}\pi\frac{ e^{ir\cos
    s}}{s}\, ds=0$, it follows that
\begin{equation*}
  PV\displaystyle\int\limits_{-s_{0}}^{s_{0}}e^{ir\cos s}\frac{(\pi
    +s)e^{i \frac{s}{2}}}{i(e^{is}-1)}ds=O(1).
\end{equation*}
Hence, the integral
along $\Gamma _{1}^{s_{0}}$ is uniformly bounded for any $r.$ The
integral over $\Gamma _{1}\backslash \Gamma _{1}^{s_{0}}$ is also
uniformly bounded since $\big\vert e^{\frac{r}{2}\left(
      z-\frac{1}{z}\right) }\big\vert =1$. Hence, the principal
value integral over $\Gamma _{1}$ is uniformly bounded.

If one considers the two singularities $z_{\pm }$ sufficiently close,
then, similarly to (\ref{singint}), the integral over $\Gamma _{1}$ can
be written as%
\begin{equation}\label{intdelta}
\int_{\Gamma _{1}}=\int_{Arc(z_{-},z_{+})}+\int_{\Gamma _{1}\backslash
Arc(z_{-},z_{+})}  
\end{equation}%
where $Arc(z_{-},z_{+})$ is the small arc of $\Gamma _{1}$ between $z_{+}$
and $z_{-}$. The second integral at the right hand side of (\ref{intdelta})
can be easily estimated just like the principal value above and yields the
same estimates uniformly in $\theta,\theta'$. The first term at the right hand side
of (\ref{intdelta}) is, after writing 
\begin{equation*}
  z_\pm=e^{i\left(\frac\pi2\pm\overline s\right)}
  =-i(\theta\cdot\theta')\pm\sqrt{1-(\theta\cdot\theta')^2},
\end{equation*}

the integral of 
\begin{align*}
  &
  \frac{e^{ir\cos s}(s+\pi)e^{i\frac s2}}
  {\sqrt{\left(e^{i\left(s+\frac\pi2\right)}-
  e^{i\left(\frac\pi2-\overline s\right)}\right)
  \left(e^{i\left(s+\frac\pi2\right)}-
  e^{i\left(\overline s+\frac\pi2\right)}\right)}}
  \\
  &
  =
  \frac{\pi e^{ir\cos s}}{\sqrt{|s+\overline s||s-\overline s|}}(1+O(s-\overline s)+O(s+\overline s))
  \times
  \begin{cases}
    -1,
    \quad\text{if }
    s<-\overline s
    \\
    +1
    \quad\text{if }
    s>\overline s
    \\
    -i
    \quad\text{if }
    -\overline s<s<\overline s
  \end{cases}
  \\
  &
  =
  \nu(s)
  \frac{\pi e^{ir\cos s}}{\sqrt{|s+\overline s||s-\overline s|}}+O\left(\frac1{\sqrt{|s-\overline s|}}\right)
  +O\left(\frac1{\sqrt{|s+\overline s|}}\right),
\end{align*}
where $|\nu(s)|=1$. As a consequence, 
we can estimate
\begin{align*}
  \left|\int_{Arc(z_-,z_+)}\right|
  &
  =
  \left|\int_{-\overline s}^{\overline s}
  \frac{\pi e^{ir\cos s}}{\sqrt{\overline s^2-s^2}}ds+O(1)\right|
  \\
  &
  =
  \left|\int_{-1}^{1}
  \frac{\pi e^{ir\cos(\overline st)}}{\sqrt{1-t^2}}dt+O(1)\right|
  \\
  &
  \leq {\rm const},
\end{align*}
uniformly with respect to $r$ and $\overline s$.
Therefore, we conclude that the integral on $\Gamma_1$ is
uniformly bounded both in $\delta $ and $r$.

Finally, the term $%
J_{1,2}$ in (\ref{suma5}), inserted at the right hand side of (\ref{suma4}),
produces
$$
\dfrac{1}{2\pi ir^{\frac{1}{2}}}\displaystyle\int_{\Gamma _{1}}e^{%
\frac{r}{2}\left( z-\frac{1}{z}\right) }\frac{a^{2}(\log (iz))^{2}}{2}\left( \sum\limits_{\ell =\ell
_{0}}^{\infty }
\frac{Z_{\theta }^{(\ell )}\left( \theta ^{\prime }\right)}{(2\ell +1)^{2}} {(iz)^{-\ell -%
\frac{1}{2}}}\right) \frac{dz}{z}
$$
where the series%
\begin{equation*}
F(\theta,\theta ^{\prime },z)=\sum_{\ell =0}^{\infty }\frac{Z_{\theta }^{(\ell
)}\left( \theta ^{\prime }\right) }{(2\ell +1)^{2}}(iz)^{-\ell -\frac{1}{2}}=\dint g(\theta,\theta', z) \,dz
\end{equation*}%
is the primitive in $z$ of the series%
\begin{equation*}
g(\theta,\theta',z)=-\frac{i}{2}\sum_{\ell =0}^{\infty }\frac{Z_{\theta }^{(\ell )}\left( \theta
^{\prime }\right) }{2\ell +1}(iz)^{-\ell -\frac{3}{2}}.
\end{equation*}%
Thus, using \eqref{Zonal} and \eqref{Legendre}, we conclude that $F(\theta,\theta ^{\prime },z)$ is the primitive of
$$
g(\theta,\theta',z)=-\frac{i}{8\pi }\frac{%
(iz)^{-\frac{3}{2}}}{\sqrt{1+2(\theta \cdot \theta ^{\prime })\frac{i}{z}-%
\frac{1}{z^{2}}}}
$$
and hence, since $g(\theta,\theta',z)$ presents a square root singularity if $\theta \cdot
\theta ^{\prime }\neq -1$ or a $1/(z-i)$ singularity at $z=i$ if $\theta
\cdot \theta ^{\prime }=-1$, we conclude that $F(\theta,\theta',z)$ presents at most a
log-type singularity, which is integrable, and consequently the integral yields a uniformly bounded contribution $\mathcal{J}_{1,2}$.
Therefore, we conclude that $I_{1}$ is uniformly bounded.

We continue estimating $I_{2}$,
$$
I_{2}=\dfrac{1}{2\pi ir^{\frac{1}{2}}}\displaystyle\int_{\Gamma_2 }e^{\frac{r}{2}
\left( z-\frac{1}{z}\right) }\left( \sum\limits_{\ell=\ell_{0}}^{\infty } \left[
(iz)^{\ell+\frac{1}{2}-b _{\ell}}-1\right]\frac{Z_{\theta }^{(\ell)}\left(
\theta^{\prime }\right)}{(iz)^{\ell+\frac{1}{2}}} \right) \frac{dz}{z}.
$$
Introducing the changes of variables, $z=e^{\pm \pi i}e^{t}$, exchanging sum and integral  (arguing as above)
and rearranging terms, we rewrite it in the form%
\begin{equation*}
I_{2}=\dfrac{1}{2\pi ir^{\frac{1}{2}}}\sum_{\ell =\ell _{0}}^{\infty }Z_{\theta }^{(\ell )}\left( \theta
^{\prime }\right) (A_{\ell }(r)+B_{\ell }(r))\equiv \dfrac{1}{2\pi ir^{\frac{1}{2}}}\big(\mathcal{J}_{2,1}+%
\mathcal{J}_{2,2})
\end{equation*}%
where
\begin{equation*}
A_{\ell }(r)=-\dfrac{2\sin {(\pi b_{\ell })}}{i^{b_{\ell }-1}}\displaystyle%
\int_{0}^{\infty }e^{-r\sinh t}(e^{-b_{\ell }t}-e^{-(\ell +\frac{1}{2})t})dt
\end{equation*}%
and
\begin{equation*}
B_{\ell }(r)=\displaystyle -2i\int_{0}^{\infty }e^{-r\sinh t}e^{-(\ell +\frac{1%
}{2})t}\bigg(\dfrac{\sin {\pi b_{\ell }}}{i^{b_{\ell }}}-\dfrac{\sin {\pi
(\ell +\frac{1}{2})}}{i^{\ell +\frac{1}{2}}}\bigg)\,dt.
\end{equation*}%
We estimate $A_{\ell }$ by using again that $|Z_{\theta }^{(\ell )}\left(
\theta ^{\prime }\right) |\leq \frac{2\ell +1}{4\pi }$,
\begin{align*}
\left\vert \mathcal{J}_{2,1}\right\vert &=\left\vert \sum_{\ell =\ell
_{0}}^{\infty }Z_{\theta }^{(\ell )}\left( \theta ^{\prime }\right) A_{\ell
}(r)\right\vert \leq C\sum_{\ell =\ell _{0}}^{\infty }(2\ell
+1)\int_{0}^{\infty }e^{-r\sinh t}\left\vert e^{-b_{\ell }t}-e^{-(\ell +
\frac{1}{2})t}\right\vert dt\\
&\leq C\sum_{\ell_{0}}^{\infty }(2\ell+1)\left\vert \frac{1}{b _{\ell}}-\frac{1}{\ell+
\frac{1}{2}}\right\vert \leq C.
\end{align*}
In order to estimate $B_{\ell }$, notice that
\begin{equation*}
\bigg(\dfrac{\sin {\pi b_{\ell }}}{i^{b_{\ell }}}-\dfrac{\sin {\pi (\ell +%
\frac{1}{2})}}{i^{\ell +\frac{1}{2}}}\bigg)
\end{equation*}%
\begin{equation*}
=\frac{1}{i^{{\ell +\frac{1}{2}}}}\left[ \sin {\pi b_{\ell }}-\sin {\pi
(\ell +\frac{1}{2})}\right] -\sin {\pi b_{\ell }}\left[ \frac{1}{i^{{\ell +%
\frac{1}{2}}}}-\frac{1}{i^{b_{\ell }}}\right]
\end{equation*}%
\begin{equation*}
=-\frac{(-1)^{{\ell }}}{i^{{\ell +\frac{1}{2}}}}\frac{1}{2}\left( \frac{a{%
\pi }}{2{\ell +1}}\right) ^{2}-\frac{(-1)^{{\ell }}}{i^{{\ell +\frac{1}{2}}}}%
\left( \frac{a{\pi }}{2(2\ell +1)}i+\frac{a^{2}{\pi }^{2}}{8(2{\ell +1)}^{2}}%
\right)+O(\ell
^{-3})
\end{equation*}%
\begin{equation*}
=-\frac{i^{{\ell +\frac{1}{2}}}}{2{\ell +1}}\frac{a{\pi }}{2}-\frac{5 i^{{\ell -\frac{1}{2}}}}{8} \bigg(\frac{a\pi}{(2{\ell +1)}}\bigg)^2+O(\ell
^{-3}).
\end{equation*}%
By using formula \eqref{Legendre}, it readily follows
\begin{align*}
  \left\vert \mathcal{J}_{2,2}\right\vert =&\left\vert \sum_{\ell
      =\ell _{0}}^{\infty }Z_{\theta }^{(\ell )}\left( \theta ^{\prime
      }\right) B_{\ell }(r)\right\vert \leq \left\vert
    \displaystyle\int_{0}^{\infty }e^{-r\sinh
      t}e^{-\frac{t}{2}}\times \right.\\
  & \times \Bigg[K_{\ell_{0}}(r,\theta,\theta',t)- i^{\frac{1}{2}}\dfrac{a\pi }{4}\dfrac{1}{\sqrt{1-ie^{-t}(\theta \cdot \theta ^{\prime })-e^{-2t}}}\\
  &\quad -\sum_{\ell =\ell _{0}}^{\infty }\frac{5a^{2}\pi ^{2}i^{{\ell
        -\frac{1}{2}}}e^{-{\ell t} }Z_{\theta }^{(\ell
      )}(\theta')}{16}\left( \frac{1}{(2{\ell +1)} ^{2}}+O(\ell
    ^{-3})\right) \Bigg]\Bigg\vert ,
\end{align*}
where $K_{\ell_{0}}$ accounts for the terms that need to be added in
order to use \eqref{Legendre} and which is uniformly bounded.

Since $\sqrt{|1-ie^{-t}(\theta \cdot \theta ^{\prime })-e^{-2t}|}\geq
\frac{{\rm const\,}\sqrt{t}}{1+\sqrt{t}}$ for some ${\rm const\,}>0$ and, using again that $|Z_{\theta
}^{(\ell )}\left( \theta ^{\prime }\right) |\leq \frac{2\ell +1}{4\pi
}$,
\begin{equation*}
\left\vert \sum_{\ell =\ell _{0}}^{\infty }i^{{\ell }}e^{-{\ell t}%
}Z_{\theta }^{(\ell )}\left(
\theta ^{\prime }\right)\left( \frac{1}{(2{\ell +1)}%
^{2}}+O(\ell ^{-3})\right) \right\vert \leq C\sum_{\ell =\ell _{0}}^{\infty }%
\frac{e^{-{\ell t}}}{{\ell }}\leq 2C\log \left\vert t\right\vert
\end{equation*}%
for some constat $C$ and any $\theta, \theta ^{\prime }$, we conclude
the existence of other constants $C^{\prime },C''$ such that
\begin{equation*}
\left\vert \mathcal{J}_{2,2}\right\vert \leq \frac{C^{\prime }}{4}
\int_{0}^{\infty }e^{-r\sinh t}e^{-\frac{t}{2}}\left[ \dfrac{1+\sqrt{t}}{
\sqrt{t}}+\log \left\vert t\right\vert \right] \leq C''.
\end{equation*}
Hence the uniform boundedness of $\mathcal{J}_{2,2}$ follows. We conclude then
\begin{equation}\label{eq:supS2}
\sup_{\substack{ r\geq 0 \\ \theta ,\theta ^{\prime }\in {\mathbb{S}}^{N-1}}}%
|S_{2}(r,\theta ,\theta ^{\prime })|<+\infty .
\end{equation}
If $\alpha_1=a_0<0$, then $\ell_0=0$. Hence $S_1(r,\theta
,\theta')=0$ and \eqref{eq:estimate_i_K} is proved.

\noindent If $\alpha_1=a_0>0$, then $\ell_0>0$. From \eqref{eq:stimaJ} and the
fact that $a_\ell\leq a_{0}$ for all $\ell\in\N$, we
deduce that
\begin{equation}\label{eq:supS11}
|S_1(r,\theta
,\theta')|\leq {\rm const\,}r^{-a_0}={\rm
  const\,}r^{-\alpha_1}\quad\text{for all }r\leq1,\
\theta,\theta'\in\SN.
\end{equation}
On the other hand, from \eqref{eq:asintinfti} and \eqref{ineqs} we
easily deduce that
\begin{equation}\label{eq:supS12}
|S_1(r,\theta
,\theta')|\leq {\rm const\,}\quad\text{for all }r\geq1,\
\theta,\theta'\in\SN.
\end{equation}
Estimate \eqref{eq:estimate_ii_K} follows from \eqref{suma},
\eqref{eq:supS2}, \eqref{eq:supS11}, and \eqref{eq:supS12}.\qed

\begin{figure}[h]
\begin{pspicture}(-4,-2.5)(4,2.5)
\psset{arrowscale=1.7}
\psline[linewidth=0.02cm](-4,0)(4,0)
\psline[linewidth=0.02cm](0,-2.5)(0,2.5)
\psline[linewidth=0.04cm]{->}(-4,-0.2)(-0.95,-0.2)
\psline[linewidth=0.04cm]{->}(-4,-0.2)(-2,-0.2)
\psline[linewidth=0.04cm]{->}(-4,-0.2)(-3,-0.2)
\psline[linewidth=0.04cm]{<-}(-4,0.2)(-1,0.2)
\psline[linewidth=0.04cm]{<-}(-3,0.2)(-1,0.2)
\psline[linewidth=0.04cm]{<-}(-2,0.2)(-1,0.2)
 \usefont{T1}{ptm}{m}{n}
 \rput(1,1){{$\bf\Gamma_1$}}
 \usefont{T1}{ptm}{m}{n}
 \rput(-2.5,0.5){{$\bf\Gamma_2$}}
\psarc[linewidth=0.04]{->}(0,0){1.001998}{190.6}{169.6900423}
\psarc[linewidth=0.04]{->}(0,0){1.001998}{190.6}{90}
\psarc[linewidth=0.04]{->}(0,0){1.001998}{190.6}{0}
\psarc[linewidth=0.04]{->}(0,0){1.001998}{190.6}{-90}
\end{pspicture}
\caption{Integration oriented domain $\gamma$.}
\label{fig:pspic}
\end{figure}
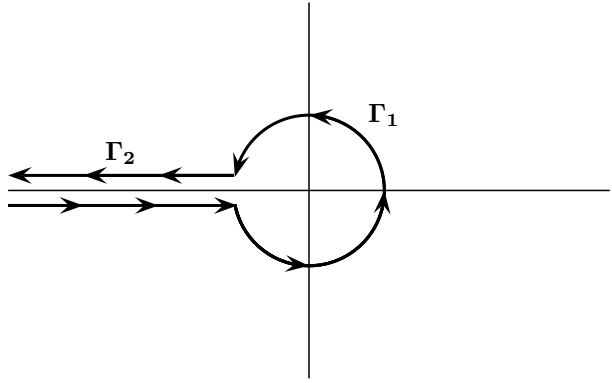

\appendix
 \section*{Appendix}
 \setcounter{section}{1}
 \setcounter{Theorem}{0}

The asymptotic behavior of eigenvalues $\mu_k(\A,a)$ as $k\to+\infty$ is
described by Weyl's law, which is recalled in the theorem below. We
refer to \cite{reedsimon4, SafarovVassiliev} for a proof.
\begin{Theorem}[\bf Weyl's law]\label{t:weyl}
For $a\in L^{\infty }({\mathbb{S}}^{N-1},{\mathbb{R}})$ and
${\mathbf{A}}\in C^{1}({\mathbb{S}}^{N-1},{\mathbb{R}}^{N})$, let $\{\mu_k(\A,a)\}_{k\geq1}$ be the eigenvalues of
the operator $L_{\A,a}=\big(-i\,\nabla_{\mathbb{S}^{N-1}}+{\mathbf{A}}\big)%
^2+a(\theta)$. Then
\begin{equation}\label{eq:weyl}
\mu_k(\A,a)=C(N,\A,a)k^{2/(N-1)}\big(1+o(1)\big)\quad\text{as }k\to+\infty,
\end{equation}
for some positive constant $C(N,\A,a)$ depending only on $N$, $\A$, and $a$.
\end{Theorem}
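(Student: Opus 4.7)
\bigskip
\noindent\textbf{Proof proposal for Theorem \ref{t:weyl}.} The plan is to reduce to the classical Weyl asymptotics for a self-adjoint elliptic second-order operator on the compact Riemannian manifold $(\mathbb{S}^{N-1},g_{\mathrm{round}})$ of dimension $n:=N-1$. First I would verify that $L_{\A,a}$, viewed as an unbounded operator on $L^2(\mathbb{S}^{N-1},\C)$ with form domain $H^1(\mathbb{S}^{N-1},\C)$, is self-adjoint, semibounded from below, and has compact resolvent: self-adjointness follows from the fact that the associated sesquilinear form
\[
q_{\A,a}(u,v)=\int_{\mathbb{S}^{N-1}}\bigl[(-i\nabla_{\mathbb{S}^{N-1}}u+\A u)\cdot\overline{(-i\nabla_{\mathbb{S}^{N-1}}v+\A v)}+a(\theta)u\bar v\bigr]\,dS
\]
is closed, symmetric, and bounded below on $H^1(\mathbb{S}^{N-1},\C)$ (since $\A\in C^1$ and $a\in L^\infty$), while compactness of the resolvent follows from the compact embedding $H^1(\mathbb{S}^{N-1})\hookrightarrow L^2(\mathbb{S}^{N-1})$. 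This guarantees the diverging sequence of real eigenvalues $\{\mu_k(\A,a)\}$ with finite multiplicity already quoted from \cite{FFT}.

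Next I would compute the principal symbol. Expanding the square,
\[
L_{\A,a}=-\Delta_{\mathbb{S}^{N-1}}+R_{\A,a},\qquad R_{\A,a}:=-2i\,\A\cdot\nabla_{\mathbb{S}^{N-1}}-i\dive_{\mathbb{S}^{N-1}}\!\A+|\A|^2+a(\theta),
\]
so $R_{\A,a}$ is a differential operator of order $\leq 1$ with bounded coefficients (of order exactly $1$ in the magnetic part and of order $0$ elsewhere). Consequently the principal symbol of $L_{\A,a}$ at a cotangent vector $(\theta,\xi)\in T^*\mathbb{S}^{N-1}$ coincides with that of $-\Delta_{\mathbb{S}^{N-1}}$, namely $|\xi|_{g_{\mathrm{round}}}^2$. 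In particular $L_{\A,a}$ is uniformly elliptic of order $2$ on a compact $n$-manifold without boundary.

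The core step is then to invoke the general Weyl asymptotic formula for such operators (see \cite{reedsimon4, SafarovVassiliev}): the counting function $\mathcal N(\lambda):=\#\{k:\mu_k(\A,a)\leq\lambda\}$ satisfies
\[
\mathcal N(\lambda)=\frac{\omega_n}{(2\pi)^n}\int_{\mathbb{S}^{N-1}}dS(\theta)\cdot \lambda^{n/2}+o(\lambda^{n/2})\quad\text{as }\lambda\to+\infty,
\]
where $\omega_n$ denotes the volume of the unit ball in $\R^n$. Inverting this asymptotic yields \eqref{eq:weyl} with the explicit constant
\[
C(N,\A,a)=\left(\frac{(2\pi)^{N-1}}{\omega_{N-1}\,|\mathbb{S}^{N-1}|}\right)^{\!\!2/(N-1)}.
\]
Note that this constant depends only on $N$ (in fact it is independent of $\A$ and $a$, as these enter only through lower-order terms and thus do not affect the leading Weyl term), which is consistent with the weaker statement in the theorem.

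The main obstacle, if one wanted to avoid a black-box appeal to the Safarov--Vassiliev treatise, would be to handle the first-order magnetic term $-2i\,\A\cdot\nabla_{\mathbb{S}^{N-1}}$: for a purely scalar perturbation one can deduce Weyl's law just from min-max together with the asymptotics of the Laplace--Beltrami eigenvalues $\lambda_\ell=\ell(\ell+N-2)$ on $\mathbb{S}^{N-1}$, but the first-order term is not form-bounded by $L^\infty$-constants and must be controlled by an $\epsilon\|u\|_{H^1}^2+C_\epsilon\|u\|_{L^2}^2$ estimate, after which min-max gives $(1-\epsilon)\mu_k(\mathbf{0},0)-C_\epsilon\leq\mu_k(\A,a)\leq(1+\epsilon)\mu_k(\mathbf{0},0)+C_\epsilon$, hence the same leading asymptotic as for $-\Delta_{\mathbb{S}^{N-1}}$. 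Either route gives \eqref{eq:weyl}.
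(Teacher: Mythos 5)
Your proposal is correct and follows essentially the same route as the paper, which for this appendix theorem simply refers to \cite{reedsimon4, SafarovVassiliev} for the standard Weyl asymptotics of a self-adjoint, semibounded, second-order elliptic operator with compact resolvent on a compact manifold; your verification of the hypotheses (scalar principal symbol $|\xi|^2_{g}$, relative form-boundedness of the lower-order terms) and the explicit value of the constant are accurate. The min-max alternative you sketch at the end is also sound and would make the argument self-contained, since the cross term $-2i\,\A\cdot\nabla_{\mathbb{S}^{N-1}}$ is indeed infinitesimally form-bounded relative to $-\Delta_{\mathbb{S}^{N-1}}$ for $\A\in C^1$, so comparison with the known eigenvalue asymptotics of the Laplace--Beltrami operator on the sphere yields \eqref{eq:weyl}.
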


The following lemma provides an estimate of the $L^{\infty}$-norm of
eigenfunctions of the operator $L_{\A,a}$ in terms of the corresponding eigenvalues.
\begin{Lemma}\label{l:stiautf}
For $a\in L^{\infty }({\mathbb{S}}^{N-1},{\mathbb{R}})$,
${\mathbf{A}}\in C^{1}({\mathbb{S}}^{N-1},{\mathbb{R}}^{N})$, and $k\in\N\setminus\{0\}$,
let $\psi_k$ be a $L^2$-normalized
eigenfunction of the Schr\"odinger operator $L_{\A,a}$ on the sphere associated to
the $k$-th eigenvalue $\mu_k(\A,a)$, i.e. satisfying \eqref{angular}.
Then, there exists a constant $\widetilde C$ depending only on $N$, $\A$,
and $a$ such that
$$
|\psi_k(\theta)|\leq \widetilde C\, |\mu_k|^{\lfloor (N-1)/2\rfloor},
$$
where $\lfloor \cdot\rfloor$ denotes the floor function, i.e. $\lfloor
x\rfloor:=\max\{j\in\Z:\ j\leq x\}$.
\end{Lemma}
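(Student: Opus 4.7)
I would prove the bound by combining elliptic regularity for $L_{\A,a}$ on $\SN$ with the Sobolev embedding on $\SN$. Expanding the eigenvalue equation $L_{\A,a}\psi_k=\mu_k\psi_k$ gives
\[
-\Delta_{\SN}\psi_k-2i\,\A\cdot\nabla_{\SN}\psi_k+V(\theta)\psi_k=\mu_k\psi_k,
\]
with $V:=|\A|^2+a-i\,\dive_{\SN}\A\in L^\iy(\SN)$ since $\A\in C^1$ and $a\in L^\iy$. Pairing with $\overline{\psi_k}$, integrating on $\SN$, and using $\|\psi_k\|_{L^2}=1$ together with the $L^\iy$ control of $\A$ and $V$ gives the basic energy estimate
\[
\|\nabla_{\SN}\psi_k\|_{L^2(\SN)}^{2}\leq C(1+|\mu_k|).
\]

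Next, I would rewrite the equation as $-\Delta_{\SN}\psi_k=f_k$ with $f_k:=(\mu_k-V)\psi_k+2i\,\A\cdot\nabla_{\SN}\psi_k$, and apply $L^2$-elliptic regularity on the compact manifold $\SN$ to get $\|\psi_k\|_{H^2(\SN)}\leq C(1+|\mu_k|)$. Bootstrapping by repeated differentiation of the equation (and, if needed for large $m$, replacing $\A$ by a smooth approximation whose $C^1$-norm is controlled) yields
\[
\|\psi_k\|_{H^{2m}(\SN)}\leq C_m(1+|\mu_k|)^m,\qquad m\in\N.
\]
Taking $m:=\lfloor(N-1)/2\rfloor$, one checks that $2m>\tfrac{N-1}{2}$ for every $N\geq 3$, so the Sobolev embedding $H^{2m}(\SN)\hookrightarrow L^\iy(\SN)$ applies and produces the claimed bound $\|\psi_k\|_{L^\iy}\leq \widetilde C (1+|\mu_k|)^{\lfloor(N-1)/2\rfloor}$. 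The case $N=2$ follows from a direct 1D argument on ${\mathbb S}^1$ via a gauge reduction of the equation to a Schr\"odinger ODE whose $L^2$-normalized eigenfunctions are uniformly bounded (so that $\lfloor 1/2\rfloor =0$ is the correct power).

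The main obstacle is the bootstrap step for $m\geq 2$, since differentiating the equation introduces derivatives of $\A$ that are not assumed to exist beyond order one. One handles this either by approximating $\A$ in $C^\iy$ and tracking that the constants in the elliptic estimates depend only on $\|\A\|_{C^1(\SN)}$ and $\|a\|_{L^\iy(\SN)}$ (via density and standard commutator estimates), or by working instead with fractional Sobolev spaces $H^s(\SN)$ with any $s>(N-1)/2$ and using complex interpolation between the $L^2$ estimate and the $H^2$ estimate, which needs only $L^\iy$ bounds on the coefficients. Either route produces the polynomial dependence $(1+|\mu_k|)^{\lfloor(N-1)/2\rfloor}$; absorbing the additive $1$ using Weyl's law (for all sufficiently large $k$) and using the continuity of $\psi_k$ together with compactness of $\SN$ for the remaining finite set of $k$ yields the stated estimate.
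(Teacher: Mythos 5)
Your overall strategy (energy estimate, elliptic regularity, bootstrap, Sobolev embedding) is the right one and matches the paper's in spirit, but the specific bootstrap you choose --- climbing the scale of \emph{differentiability}, $H^{2}\to H^{4}\to\dots\to H^{2m}$ --- has a gap that neither of your proposed fixes closes. Differentiating the equation $m-1$ times forces derivatives of ${\mathbf A}$ and of $V=|\A|^2+a-i\dive_{\SN}\A$ up to order $m$ into the right-hand side, and these are simply not finite under the hypotheses $\A\in C^1$, $a\in L^\infty$. Your first fix (smooth approximation of $\A$) does not work: the constants in the higher-order elliptic estimates genuinely depend on $\|\A_\varepsilon\|_{C^{m}}$, which blows up as $\varepsilon\to0$, and moreover the eigenfunctions of the regularized operator are not the $\psi_k$ you need to bound. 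Your second fix (interpolation between the $L^2$ and $H^2$ bounds) only reaches $H^s(\SN)$ for $0\le s\le 2$, so the embedding into $L^\infty$ requires $(N-1)/2<2$, i.e.\ $N\le 4$; for $N\ge5$ this route is dead, and one cannot push past $H^2$ because $V\in L^\infty$ is a multiplier only on $L^2$, so the source term $f_k=(\mu_k-V)\psi_k+2i\A\cdot\nabla_{\SN}\psi_k$ never gains more than $L^2$ regularity.

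The paper's proof bootstraps in \emph{integrability} instead, which is the correct repair: knowing $\psi_k\in W^{2,p_{j-1}}$ with $\|\psi_k\|_{W^{2,p_{j-1}}}\lesssim \mu_k^{\,j-1}$, Sobolev embedding gives $\psi_k\in L^{p_j}$ and $\nabla_{\SN}\psi_k\in L^{p_j}$ with $p_j=\tfrac{2(N-1)}{(N-1)-2(j-1)}$, hence $f_k\in L^{p_j}$ with norm $\lesssim\mu_k^{\,j}$, and Calder\'on--Zygmund regularity for $-\Delta_{\SN}$ (which needs no smoothness of $\A$ or $V$ beyond what is assumed, since they sit on the right-hand side) gives $\|\psi_k\|_{W^{2,p_j}}\lesssim\mu_k^{\,j}$. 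Each step costs exactly one power of $\mu_k$, and after $j=\lfloor(N-1)/2\rfloor$ steps one has $2>(N-1)/p_j$, so $W^{2,p_j}(\SN)\hookrightarrow C^{0,\alpha}(\SN)\hookrightarrow L^\infty(\SN)$ and the stated power appears. I recommend you replace your $H^{2m}$ bootstrap by this $L^p$-scale bootstrap; your energy estimate, your treatment of $N=2$ on ${\mathbb S}^1$, and your handling of the finitely many small $k$ by continuity and compactness can all be kept as they are.
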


\begin{pf}
Using classical elliptic regularity theory and bootstrap methods, we
can easily prove that for any $j\in\N$ there exists a constant
$C(N,\A,a,j)$, depending only on $j$, $\A$, $a$, and $N$ but
independent of $k$, such that,
for large $k$,
$$
\|\psi_k\|_{W^{2,\frac{2(N-1)}{(N-1)-2(j-1)}}({\mathbb S}^{N-1})}\leq
C(N,\A,a,j) \big(\mu_k(\A,a)\big)^j.
$$
Choosing $j=\big\lfloor \frac{N-1}2\big\rfloor$, by Sobolev's inclusions we
deduce that
$$
W^{2,\frac{2(N-1)}{(N-1)-2(j-1)}}({\mathbb
S}^{N-1})\hookrightarrow C^{0,\alpha}({\mathbb
S}^{N-1})\hookrightarrow L^{\infty}({\mathbb S}^{N-1}),
$$
for any $0<\alpha<1-\frac {N-1}2+\big\lfloor\frac
{N-1}2\big\rfloor$, thus implying the required estimate.
\end{pf}
\section*{Acknowledgments}

The authors would like to thank J.J. Vel\'azquez and F. Maci\`a for
fruitful conversations on the topic of this article.
 This work has been partially supported by Grant MTM2011-26016 and
 MTM2010-18128, and by the PRIN2009 Grant ''Critical Point Theory and 
Perturbative Methods for Nonlinear Differential Equations''.

\end{document}